\def\ad{\operatorname{ad}}
\theoremstyle{plain}
\newtheorem{theorem}{Theorem}
\newtheorem{corollary}[theorem]{Corollary}
\newtheorem{proposition}[theorem]{Proposition}
\newtheorem{lemma}[theorem]{Lemma}
 \theoremstyle{definition}
 \theoremstyle{remark}
\DeclareMathOperator{\Exp}{Exp}
\DeclareMathOperator{\tr}{tr}
\DeclareMathOperator{\spann}{span}
\DeclareMathOperator{\rank}{rank}
\DeclareMathOperator{\Id}{Id}
\DeclareMathOperator{\I}{I}
\DeclareMathOperator{\SO}{SO}
\DeclareMathOperator{\so}{so}
\def\sn{\operatorname{sn}}
\def\cn{\operatorname{cn}}
\def\dn{\operatorname{dn}}
\def\am{\operatorname{am}}
\def\Kill{\operatorname{Kill}}
\def\spann{\operatorname{span}}
\def\sign{\operatorname{sign}}
\begin{document}
\title{Sub-Riemannian and almost-Riemannian \\ geodesics on $\SO(3)$ and $S^2$}

\author{I. Yu. Beschastnyi\thanks{Program Systems Institute, Pereslavl-Zalessky, Russia, i.beschastnyi@gmail.com} \and Yu. L. Sachkov\thanks{Program Systems Institute, Pereslavl-Zalessky, Russia, yusachkov@gmail.com}}

\thanks{Work supported by 
Grant of the Russian Federation for the State Support of Researches
(Agreement  No14.B25.31.0029).}



\maketitle

\begin{abstract}
In this paper we study geodesics of left-invariant sub-Rie\-man\-nian metrics on SO(3) and almost-Rie\-man\-nian metrics on $S^2$. These structures are connected with each other, and it is possible to use information about one of them to obtain results about another one. We give an explicit parameterization of sub-Riemannian geodesics on SO(3) and use it to get a parameterization of almost-Riemannian geodesics on $S^2$. We use symmetries of the exponential map to obtain some necessary optimality conditions. We present some upper bounds on the cut time in both cases and describe periodic geodesics on SO(3).
\end{abstract}

\section*{Introduction}
\label{sec:intro}

A sub-Riemannian manifold is a triple $(M,\Delta,g)$, where $M$ is a smooth connected manifold, $\Delta$ is a smooth constant rank distribution on $M$ and $g$ is a smooth Riemannian metric on $\Delta$. Sub-Riemannian structures often appear in applications, like quantum control~\cite{boscain_quant}, robotics~\cite{sachkov_se2,sachkov2,sachkov3}, image manipulation~\cite{bdrs} and many others~\cite{montgomery}.

In a recent article~\cite{ab_classification}, a full classification of left-invariant sub-Riemannian structures on 3D Lie groups was given. These structures give the basic and most simple examples of sub-Riemannian manifolds. That is why they are often used as models for general techniques and as a source of new ideas and intuition for studying more complex sub-Riemannian manifolds. 

One of the most important issues in Riemannian geometry and its generalizations is the description of the minimal (shortest) curves. This problem can be rather hard, and even in the simplest case of left-invariant 3D sub-Riemannian manifolds a full description of minimal curves is known only in a small number of cases: the sub-Riemannian problem on the Heisenberg group~\cite{montgomery,vershik}, its spherical and hyperbolic analogs~\cite{boscain_rossi,berestovsky,markina1,markina2} and the sub-Riemannian problem on SE(2)~\cite{berestovsky_se2,sachkov_se2,sachkov2,sachkov3}. Some significant progress was also made in the case of SH(2)~\cite{butt}. 

In general it is known that geodesic flows of sub-Riemannian structures on 3D unimodular Lie groups are Liouville integrable~\cite{integrability1,integrability2}. Using a notion of curvature for sub-Riemannian manifolds, authors of paper~\cite{curvature} were able to provide estimates on the conjugate time for the same groups. But a characterization of shortest left-invariant sub-Riemannian geodesics on SO(3) and SL(2) is still unknown.

Geodesics of sub-Riemannian metrics and Riemannian metrics on SO(3) behave similarly in many ways. The main reason for this is that any contact sub-Riemannian metric can be obtained as a limit of some family of Riemannian metrics (the penalty metric, see~\cite{montgomery}). After a suitable change of coordinates a Riemannian metric $g$ becomes diagonal:
$$
g_{ij}=\begin{pmatrix}
I_1 & 0 & 0 \\
0 & I_2 & 0 \\
0 & 0 & I_3
\end{pmatrix}.
$$  
It is well known that Riemannian geodesics on SO(3) describe motions of a free rigid body. The constants $I_j$ depend on the  mass distribution of the body and are called the principal inertia moments. A study of the rotational movement of rigid bodies was initiated by Euler. In 1766 he wrote down and integrated equations of motion in the (Euler) case $I_1 = I_2 = I_3$~\cite{euler}. Later, in 1788, Lagrange obtained a parameterization of geodesics for the (Lagrange) case, when just two principal inertia moments are equal one to another~\cite{lagrange}. In the general case equations of motion were integrated by Jacobi in 1849 after he introduced his famous elliptic functions~\cite{jacobi}. Nowadays the free rigid body dynamics became a classic topic in mechanics. It is discussed in a number of different text books, like~\cite{landau} or~\cite{whit}. Nevertheless, the optimality question seems still to be open in the general case (see~\cite{berger}, Section 6.5.4). 

One obtains a sub-Riemannian structure by passing to a limit $I_j \rightarrow +\infty$ for some $j$. There is no physical rigid body that corresponds to the sub-Riemannian case, because there are some additional physical constraints on the inertia moments~\cite{whit}, the triangle inequalities:
$$
I_1+I_2\leq I_3, \qquad
I_2+I_3\leq I_1, \qquad
I_3+I_1\leq I_2.
$$
Nevertheless the sub-Riemannian geodesics still have meaningful applications~\cite{car}. Our initial goal was to obtain a full description of minimal curves on SO(3) equipped with a one-parametric family of left-invariant sub-Riemannian metrics. In this family there is one particular symmetric structure that corresponds to a bi-invariant metric, which was completely examined earlier in papers~\cite{boscain_rossi,berestovsky,markina1,markina2}. Thus in this article we consider left-invariant metrics that are not bi-invariant. Although we have not obtained a full description of the minimal curves, we give new necessary optimality conditions for geodesic curves and some new properties of periodic geodesics. A very brief description of periodic geodesics was previously given in~\cite{vershik}. In this paper we investigate their topological properties and give specific conditions for a geodesic to be periodic.

In the second part of the article we study almost-Riemannian problems on the two-sphere. Naively an almost-Riemannian manifold is obtained in the following way: take an $n$-dimensional smooth manifold, $n$ vector fields that are linearly independent almost everywhere and define a metric in which these vector fields form an orthonormal frame. The set of points where these $n$ vector fields are linearly dependent is called the singular set. Given a sub-Riemannian structure on SO(3), one can project it down to its homogeneous space $S^2$. After this procedure the two-sphere is endowed with a structure of an almost-Riemannian manifold. 

Almost-Riemannian structures arise in problems of population transfer in quantum mechanics~\cite{boscain_quant,boscain_quant2} and in the problem of orbital transfer in space mechanics~\cite{space}. Geodesics on almost-Riemannian two-spheres were previously studied in a series of papers~\cite{boscain_quant,boscain_quant2} in a context of quantum control. Remarkably, authors of these articles were able to provide an optimal synthesis for a particular initial point on $S^2$ without a full parameterization of geodesics. In this paper we study the symmetries of the exponential mapping in almost-Riemannian problems on $S^2$ and obtain some necessary optimality conditions. We then use them to obtain some new bounds on the cut time for almost-Riemannian geodesics on $S^2$ and sub-Riemannian geodesics on SO(3).

We also note, that during preparation of this manuscript article~\cite{bonnard1} and preprint~\cite{bonnard2} appeared, where the same sub-Riemannian and almost-Riemannian problems were studied. Thus it is  reasonable to indicate explicitly the novelty of some results in this paper. We integrate the Hamiltonian system for sub-Riemannian geodesics on SO(3) using a well-known approach from mechanics~\cite{landau,jurdjevic}, and action-angle coordinates in the dual of so(3) induced by a pendulum~\cite{sachkov_se2}. In~\cite{bonnard1} and~\cite{bonnard2} the authors gave a similar parameterization using the same technique, but omitted details. Here we give a full derivation for parameterization of sub-Riemannian geodesics and use it to obtain a parameterization for almost-Riemannian geodesics. Using these formulas we give a novel characterization of periodic geodesics on SO(3) and study their topological properties. The description of symmetries of the exponential map and necessary optimality conditions in the sub-Riemannian problem are essentially new.

In~\cite{bonnard2} necessary and sufficient optimality conditions are formulated for geodesics which start from the singular set. In this paper we show that the sub-Riemannian structure on SO(3) and almost-Riemannian structure on $S^2$ share a number of discrete symmetries. We use this fact to obtain some optimality conditions in the almost-Riemannian case. This technique allows us only to give necessary conditions, but we state them for some initial points that lie outside the singular set. We use the parameterization of geodesics from this paper and results from~\cite{bonnard2} to obtain bounds on the cut time for almost-Riemannian geodesics that start from the singular set and for any sub-Riemannian geodesics on SO(3). Thus the second part of this paper may be considered as  complementary to~\cite{bonnard1} and~\cite{bonnard2}, where authors have proved many interesting results.

In the following text we use the following notations:
\begin{itemize}
\item $A_i$ is the standard basis of $\so(3)=T_{\Id}\SO(3)$
\begin{equation}
A_1 = \begin{pmatrix}
0 & 0 & 0 \\
0 & 0 & -1 \\
0 & 1 & 0
\end{pmatrix},
\quad
A_2 = \begin{pmatrix}
0 & 0 & 1 \\
0 & 0 & 0 \\
-1 & 0 & 0
\end{pmatrix},
\quad
A_3 = \begin{pmatrix}
0 & -1 & 0 \\
1 & 0 & 0 \\
0 & 0 & 0
\end{pmatrix};
\end{equation}
\item $e_i$ is the basis of $\mathbb{R}^3$
\begin{equation}
e_1 = \begin{pmatrix}
1 \\
0 \\
0
\end{pmatrix},
\quad
e_2 = \begin{pmatrix}
0 \\
1\\
0
\end{pmatrix},
\quad
e_3 = \begin{pmatrix}
0 \\
0 \\
1
\end{pmatrix};
\end{equation}
\item $i,j,k$ is the basis in the space $\I$ of imaginary quaternions;
\item $\eta^l$ is the basis in $T_{\Id}^*\SO(3)$ dual to $A_l$, i.e., $\langle\eta^l,A_m\rangle=\delta^l_m$, $l,m=1,2,3$.
\end{itemize}

By a capital letter we denote an element of $\so(3)$, by a small letter --- the corresponding imaginary quaternion, and by a small letter with an arrow --- the corresponding vector in $\mathbb{R}^3$:
$$
\Omega \simeq \omega\simeq \vec{\omega},\qquad \Omega\in\so(3), \qquad \omega\in\I, \qquad \vec{\omega}\in\mathbb{R}^3.
$$

The structure of this paper is as follows. In Section~\ref{sec:SR-problem} we formulate the sub-Riemannian problem. In Section~\ref{sec:SR-geodesics} we write down the Hamiltonian system of the Pontryagin maximum principle and integrate it. Periodic geodesics are studied in Section~\ref{sec:periodic}. Symmetries and necessary optimality conditions are given in Section~\ref{sec:symmetries}. In Section~\ref{sec:AR-problem} almost-Riemannian structures on $S^2$ are defined and the connection with sub-Riemannian structures on SO(3) is explained. Symmetries and bounds on the cut time in the family of almost-Riemannian problems are given in Section~\ref{sec:AR-symmetries}. 

For the reader's convenience we have summarized all necessary definitions and properties of the elliptic integrals and Jacobi elliptic functions in Appendix B. The isomorphism between the space $\I$ of imaginary quaternions, $\mathbb{R}^3$ and $\so(3)$ is defined by (\ref{algebra_iso}) in Appendix A, where the necessary information about the space of quaternions is collected. 

\section{Statement of sub-Riemannian problem on SO(3)}
\label{sec:SR-problem}

Consider the Lie group SO(3) of rotations of the 3-dimensional space. We can define a left-invariant distribution in two equivalent ways: as a kernel of a left-invariant one-form or as a linear span of two linearly independent left-invariant vector fields $RX_1, RX_2$. Here $X_1,X_2$ are elements of the Lie algebra so(3) and $R\in\SO(3)$. If the distribution $\Delta$ is contact, then $[X_1,X_2]\notin \Delta$. One can define a left-invariant metric $g$ on $\Delta$ by declaring $X_1$, $X_2$ orthonormal for $g$.

From the classification of left-invariant structures on 3D Lie groups~\cite{ab_classification} it follows that $X_1$ and $X_2$ can be chosen to satisfy the following structure equations:
\begin{equation}
\label{structure_equations}
[X_2,X_1]=X_3,\qquad [X_1,X_3]=(\kappa + \chi) X_2,\qquad [X_2,X_3]=(\chi - \kappa) X_1,
\end{equation}
where $\kappa\in\mathbb{R}$, $\chi\geq 0$ are two differential invariants of the sub-Riemannian structure that satisfy $\kappa - \chi \geq 0$ in the case of SO(3). The scaling of the frame $\{X_1,X_2\}$ scales proportionally the distance function and both invariants $\kappa$ and $\chi$. Thus authors of~\cite{ab_classification} considered normalized structures for which $\kappa^2 +\chi^2 = 1$. For further calculations in this paper it is more suitable to use the normalization $\kappa + \chi = 1$. Let also $a\in [0,1)$ be the invariant defined by $a = \sqrt{2\chi}$. Then all non-isometric sub-Riemannian structures on SO(3) are parameterized by $a$.

It is easily verified that the Lie algebra elements
\begin{equation}
X_1 = A_2, \qquad X_2 = \sqrt{1-a^2}A_1, \qquad X_3 = \sqrt{1-a^2}A_3
\end{equation}
satisfy the above structure equations.

A Lipschitz continuous curve $R:[0,T]\to\SO(3)$ is called horizontal if for a.e. $t\in [0,T]$ we have $\dot{R}(t)\in\Delta_{R(t)}$. The length of a horizontal curve is defined as usual:
$$
l(R)=\int_0^T\sqrt{g(\dot{R}(t),\dot{R}(t))}dt.
$$

Our goal is to find minimal horizontal curves that connect two given points $R_0,R_1\in \SO(3)$. Since the problem is left-invariant, we can assume that $R_0$ is the identity element of SO(3). By the Cauchy-Shwartz inequality, minimization of the sub-Riemannian length is equivalent to minimization of the action functional
$$
\frac{1}{2}\int_0^T g(\dot{R}(t),\dot{R}(t))dt \rightarrow \min
$$
with fixed $T$. Thus we can formulate the problem of finding minimal curves as an optimal control problem of the form:
\begin{equation}
\label{R_system}
\dot{R}=R(u_1X_1+u_2X_2), \qquad R\in \SO(3), \qquad (u_1,u_2)\in\mathbb{R}^2,
\end{equation}
\begin{equation}
\label{b_conditions}
R(0)= \Id, \qquad R(T) = R_1,
\end{equation}
\begin{equation}
\label{sr_action}
\int_0^T\frac{u_1^2 + u_2^2}{2} dt \rightarrow \min, \qquad T \text{ is fixed.}
\end{equation}

Since $\Delta_R + [\Delta,\Delta]_R=T_R\SO(3)$, the system has full rank and is thus completely controllable. We can reduce the given optimal control problem with a quadratic cost to a time optimal control problem with the same dynamics (\ref{R_system}), the same boundary conditions (\ref{b_conditions}), but with constraints $u_1^2+u_2^2 \leq 1$ and time minimization functional $T\rightarrow \min$ (see, for example, \cite{sachkov_se2}). After that we can apply Filippov's Theorem to establish existence of minimizing curves~\cite{agrachev_sachkov}.  

If $a = 0$, then we get the Lagrange sub-Riemannian case, meaning that this sub-Riemannian metric is a limit of Lagrange Riemannian metrics with $I_1 = I_2 = 1$ and $I_3 \to +\infty$. This case was completely studied in~\cite{boscain_rossi}, where the cut time for each trajectory was found, and in~\cite{berestovsky}, where analytic expressions for the sub-Riemannian spheres were given. In this particular case we have an additional rotational symmetry and a nice geometric interpretation: the sub-Riemannian problem is just the isoperimetric problem on the sphere. In the rest of the article we assume that $a\in(0,1)$.

\section{Parameterization of sub-Riemannian geodesics}
\label{sec:SR-geodesics}

Next we apply the Pontryagin maximum principle (PMP) to obtain a parameterization of geodesics, i.e., curves whose short arcs are length minimizers. Let $\mathfrak{g}^*$ be the dual of Lie algebra $\mathfrak{g}=\so(3)$. We introduce the control-dependent Hamiltonian of PMP:
$$
H_u(p) = \langle p,u_1X_1+u_2X_2 \rangle +\frac{\nu}{2}(u_1^2+u_2^2), \qquad p\in\mathfrak{g}^*,\; \nu\leq 0.
$$

\begin{theorem}[Pontryagin maximum principle~\cite{pmp,agrachev_sachkov}] 
If a pair $(u(t),R_t)$ is optimal, $t\in[0,T]$, then there is a Lipschitz curve $p(t)\in \mathfrak{g}^*$ and $\nu\leq 0$ such that:
\begin{enumerate}
\item $(p(t),\nu)\neq 0$,
\item $\left\{\begin{matrix}
\dot{R} = R\dfrac{\partial H}{\partial p},\\ 
\dot{p}=\ad^*\dfrac{\partial H}{\partial p}p;
\end{matrix}\right.$
\item $H(p) = \max_{u\in\mathbb{R}^2}H_u(p).$
\end{enumerate}
\end{theorem}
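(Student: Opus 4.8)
The statement is nothing but the Pontryagin maximum principle of \cite{pmp,agrachev_sachkov}, rewritten for the particular left-invariant system (\ref{R_system})--(\ref{sr_action}); the only feature specific to this setting is the coadjoint form of the adjoint equation in item (2). My plan is therefore to invoke the general theorem on the manifold $\SO(3)$ and then to reduce the resulting Hamiltonian system to $\mathfrak{g}^*$ by left-trivialization.

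First I would apply the general PMP directly to the optimal control problem with state space $M=\SO(3)$, dynamics $\dot R = R(u_1X_1+u_2X_2)$ and running cost $(u_1^2+u_2^2)/2$. This produces a Lipschitz adjoint covector $\lambda(t)\in T^*_{R(t)}\SO(3)$ together with $\nu\leq 0$, not simultaneously zero, such that the trajectory is an integral curve of the Hamiltonian vector field of the maximized Hamiltonian and the maximality relation holds pointwise. The non-triviality $(\lambda,\nu)\neq 0$ is exactly item (1), and the maximum condition is item (3); so the whole burden is to put the Hamiltonian system into the claimed form.

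Next I would exploit left-invariance. Both the right-hand side $R(u_1X_1+u_2X_2)$ and the cost are invariant under the left action $Q\mapsto RQ$, so I pass to the left-trivialization $T^*\SO(3)\cong \SO(3)\times\mathfrak{g}^*$ and set $p(t):=(dL_{R(t)})^*\lambda(t)\in\mathfrak{g}^*$, the pullback of $\lambda$ to the identity. Under this identification the control-dependent Hamiltonian becomes precisely $H_u(p)=\langle p,u_1X_1+u_2X_2\rangle+\tfrac{\nu}{2}(u_1^2+u_2^2)$, and after maximization $H$ descends to a function of $p$ alone. Writing $\tfrac{\partial H}{\partial p}\in\mathfrak{g}$ for its differential, the horizontal part of the Hamiltonian flow reproduces $\dot R=R\,\tfrac{\partial H}{\partial p}$.

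The crux is the vertical component. Here I would compute the canonical symplectic form in the left-trivialized coordinates $(R,p)$ and read off the vertical part of the Hamiltonian vector field; using the structure equations (\ref{structure_equations}) this collapses to the Lie--Poisson (Euler) equation $\dot p = \ad^*\tfrac{\partial H}{\partial p}\,p$, which is item (2). The main obstacle I anticipate is purely bookkeeping: getting the coadjoint term, and in particular its sign, correct under the left- (rather than right-) invariant convention, since it is easy to introduce a spurious sign when trivializing the cotangent bundle. Everything else is a direct transcription of the cited general theorem.
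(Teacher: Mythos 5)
The paper contains no proof of this statement: it is imported verbatim as a cited theorem (\cite{pmp,agrachev_sachkov}), and the text proceeds immediately to apply it. So there is no internal argument to compare yours against; the benchmark is the standard derivation in the cited sources, and your outline reproduces it correctly. Applying the general PMP on $M=\SO(3)$, left-trivializing $T^*\SO(3)\cong \SO(3)\times\mathfrak{g}^*$ via $p(t)=(dL_{R(t)})^*\lambda(t)$, and reading off the Lie--Poisson form of the vertical equation is exactly how the coadjoint form $\dot{p}=\ad^*\frac{\partial H}{\partial p}\,p$ is obtained in Agrachev--Sachkov and Jurdjevic. Your flagged worry about the sign is the right one, and note that the paper fixes its convention only implicitly, in the Lax-form computation of Section 2, through the identity $\langle \ad^*(\Omega)p,A\rangle=\langle p,[\Omega,A]\rangle$; your trivialized symplectic computation must land on that convention, not its negative.

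Two technical points you gloss over, neither fatal. First, you propose to take integral curves of the Hamiltonian vector field of the \emph{maximized} Hamiltonian. The PMP is properly stated with the control-dependent Hamiltonian $H_{u(t)}$ evaluated along the optimal control; the maximized-Hamiltonian form is legitimate only where $H(p)=\max_u H_u(p)$ is smooth. Here that holds in the normal case, where $H=(p_1^2+p_2^2)/2$, but in the abnormal case $\nu=0$ the supremum over unbounded $u\in\mathbb{R}^2$ is $+\infty$ unless $\langle p,X_1\rangle=\langle p,X_2\rangle=0$, so item (2) with $\partial H/\partial p$ must be read as the derivative of $H_{u(t)}$ in $p$; since the paper later discards abnormals (no non-constant abnormal geodesics in the contact case), this is harmless but should be said. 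Second, the asserted Lipschitz regularity of $p(t)$ deserves a sentence: it follows because the adjoint equation is linear in $p$ with bounded measurable coefficients once one reduces to bounded controls, which the paper does via the time-reparameterization to $u_1^2+u_2^2\leq 1$ mentioned in Section 1.
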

Here $\ad^*(\cdot)$ is the coadjoint representation of the Lie algebra $\mathfrak{g}$.

Since in the contact case there are no non-constant abnormal geodesics~\cite{ab_classification}, we can assume that $\nu=-1$. The maximized Hamiltonian of PMP is
$$
H(p)=\max_{u\in \mathbb{R}^2}H_u(p) = \frac{\langle p,X_1\rangle^2 + \langle p,X_2 \rangle^2}{2}=\frac{p_1^2+p_2^2}{2}
$$
with controls
$$
u_1 = \langle p,X_1\rangle = p_1, \qquad  u_2 = \langle p,X_2\rangle = p_2.
$$
Thus we have
$$
\frac{\partial H}{\partial p} = p_1X_1 + p_2X_2.
$$

It is easy to see that
$$
\ad\frac{\partial H}{\partial p} = \begin{pmatrix}
0 & 0 & -p_2(1-a^2) \\
0 & 0 & p_1 \\
p_2 & -p_1 & 0 
\end{pmatrix}.
$$
Then
$$
\begin{pmatrix}
\dot{p}_1 & \dot{p}_2 & \dot{p}_3
\end{pmatrix} = 
\begin{pmatrix}
p_1 & p_2 & p_3
\end{pmatrix}
\begin{pmatrix}
0 & 0 & -p_2(1-a^2) \\
0 & 0 & p_1 \\
p_2 & -p_1 & 0 
\end{pmatrix}
$$
and we get the following expression for the Hamiltonian system:
\begin{equation}
\label{horizontal_subsystem}
\dot{R} = R(p_1X_1 + p_2X_2),
\end{equation}
\begin{align}
\dot{p}_1 &= p_3p_2, \nonumber\\
\dot{p}_2 &= -p_3p_1, \label{vertical_subsystem}\\
\dot{p}_3 &= a^2 p_1p_2.\nonumber
\end{align}

We will perform integration of the Hamiltonian system in three steps. First we integrate the vertical subsystem (\ref{vertical_subsystem}), since its right-hand side does not depend on the horizontal variables. After a simple change of variables the vertical subsystem is transformed into the equations of mathematical pendulum for which explicit solution is known. Next we rewrite the vertical subsystem in the so-called Lax form and use an Euler angles parameterization for matrix $R_t$ to obtain expressions for two of three Euler angles without solving the corresponding differential equations. Finally, we use all previous results to integrate the ODE for the last angle in terms of the elliptic integral of the third kind.

Now we begin the first step. Consider extremal curves parameterized by arclength. In this case we have $H=\frac{1}{2}$ and we can express $p_1$ and $p_2$ in the following way:
\begin{equation}
\label{zamena}
p_1 = \cos\psi, \qquad p_2 = -\sin\psi.
\end{equation}
The vertical system (\ref{vertical_subsystem}) becomes
\begin{equation}
\label{vertical}
\begin{array}{l}
\dot{\psi} = p_3, \\
\dot{p}_3 = -\dfrac{a^2}{2} \sin 2\psi
\end{array}
\end{equation}
where $\psi \in S=\mathbb{R}/2\pi\mathbb{Z}$.

The cylinder $H=\frac{1}{2}$ is divided into regions determined by the energy of the pendulum $E=2p_3^2 - a^2\cos2\psi$:
\begin{align*}
C_1 &= \{\lambda\in C: E\in(-a^2,a^2)\}, \\
C_2 &= \{\lambda\in C: E\in(a^2, +\infty)\}, \\
C_3 &= \{\lambda\in C: E = a^2, p_3 \neq 0\}, \\
C_4 &= \{\lambda\in C: E = -a^2\}, \\
C_5 &= \{\lambda\in C: E = a^2, p_3 = 0\}.
\end{align*} 

We use different coordinates for different regions~\cite{lawden} (for definitions of the Jacobi elliptic functions see Appendix B):
\begin{itemize}
\item Elliptic coordinates in $C_1$:
\begin{align*}
\sin\psi &= s_1k\sn\left(a\theta,k^2\right), & &s_1 = \sign\left( \cos\psi \right),\\
\cos\psi &= s_1\dn\left(a\theta,k^2\right), & &k\in(0,1),\\
p_3 &= ak\cn\left(a\theta,k^2\right), & &\theta\in[0,4K(k^2)/a].
\end{align*}
\item Elliptic coordinates in $C_2$:
\begin{align*}
\sin\psi &= s_2\sn\left(\frac{a\theta}{k},k^2\right), & &s_2 = \sign(c),\\
\cos\psi &= \cn\left(\frac{a\theta}{k},k^2\right), & &k\in (0,1),\\
p_3 &= \frac{s_2a}{k}\dn\left(\frac{a\theta}{k},k^2\right), & &\theta \in[0,4kK(k^2)/a].
\end{align*}
\item Elliptic coordinates in $C_3$:
\begin{align*}
\sin\psi &= s_1s_2\tanh a\theta,\\
\cos\psi &= \frac{s_1}{\cosh a\theta},\\
p_3 &= \frac{s_2a}{\cosh a\theta}, \\
\theta&\in(-\infty,+\infty).
\end{align*}
\item Solution in $C_4$:
\begin{align*}
\psi = \pi &n , n\in \mathbb{Z},\\
p_3 &= 0.
\end{align*}
\item Solution in $C_5$:
\begin{align*}
\psi = -\frac{\pi}{2}&+\pi n, n\in \mathbb{Z},\\
p_3 &= 0.
\end{align*}
\end{itemize}

For integration of the horizontal subsystem (\ref{horizontal_subsystem}) we follow a technique that is well known in the literature~\cite{landau,jurdjevic}. First we rewrite the vertical subsystem (\ref{vertical_subsystem}) in Lax form.

The Killing form $\Kill(\cdot,\cdot)$ allows us to identify a semisimple Lie algebra $\mathfrak{g}$ with its dual $\mathfrak{g}^*$ via the isomorphism:
$$
\Kill: X \mapsto \Kill(X,\cdot)\in\mathfrak{g}^*
$$ 
for any $X\in\mathfrak{g}$. The biinvariance condition for the Killing form  can be written in the following way:
$$
\Kill([Z, X], Y) + \Kill(X,[Z,Y]) = 0.
$$

Let $L\in \mathfrak{g}$ be a vector dual to $p\in\mathfrak{g}^*$. Take an arbitrary element $A\in \mathfrak{g}$. Using the equality $\frac{\partial H}{\partial p} = p_1X_1 +p_2X_2 = \Omega$, we obtain
\begin{align*}
\Kill(\dot{L},A) = \langle \dot{p},A\rangle = \langle \ad^*(\Omega)p,A\rangle =  \langle p,[\Omega, A]\rangle = \Kill( L,[\Omega, A]) = \Kill([L, \Omega], A).
\end{align*}
Since the Killing form is non-degenerate and $A\in\mathfrak{g}$ is arbitrary, we have
$$
\dot{p} =\ad^*\frac{\partial H}{\partial p}p \Leftrightarrow \dot{L}=[L,\Omega].
$$
This is the celebrated Lax equation~\cite{jurdjevic}.

The Killing form takes the simplest form in the basis $A_i$. If $X=x_1A_1+x_2A_2+x_3A_3$ and $Y=y_1A_1+y_2A_2+y_3A_3$ then 
$$
\Kill(X,Y)=-\frac{1}{2}\tr(XY)=\sum_{i=1}^3x_iy_i.
$$

Let $\nu^i$ be the basis in $\mathfrak{g}^*$ dual to $X_i$, i.e.,
 $$\langle\nu^i,X_j\rangle=\delta^i_j, \qquad i,j=1,2,3.$$ 
 The basis $\eta^i$ depends on $\nu^i$ in the following way:
\begin{eqnarray*}
\eta^2 = \nu^1; \\
\eta^1 = \sqrt{1-a^2}\nu^2; \\
\eta^3 = \sqrt{1-a^2}\nu^3.
\end{eqnarray*}
Consequently, the Lax equation takes the form:
$$
\frac{\dot{p}_2}{\sqrt{1-a^2}}A_1 + \dot{p}_1A_2 + \frac{\dot{p}_3}{\sqrt{1-a^2}}A_3 = \left[\frac{p_2}{\sqrt{1-a^2}}A_1 + p_1A_2 + \frac{p_3}{\sqrt{1-a^2}}A_3, p_2\sqrt{1-a^2}A_1 + p_1A_2 \right].
$$
If we take $P=\sqrt{1-a^2}L$, then $P$, $\Omega$ is still a Lax pair:
$$
\dot{P}=[P,\Omega].
$$

It is easy to check that
\begin{equation*}
P_t = R_t^{-1}P_0R_t
\end{equation*}
is in fact a solution of the Lax equation. Using the isomorphism between so(3) and $\mathbb{R}^3$ we can rewrite this solution in the form
\begin{equation}
\label{lax_solution}
\vec{p}_t = R_t^{-1}\vec{p}_0.
\end{equation}
From this it follows that the length of $\vec{p}_t$ is a conserved quantity. We denote its square by $M  = |\vec{p}_t|^2 = p_2^2 + p_1^2(1-a^2)+p_3^2$. 

In order to make use of (\ref{lax_solution}) we introduce a curve $D_t\in \SO(3)$ such that
$$
\vec{p}_t = \sqrt{M}D_t^{-1}e_3.
$$ 
Plugging this expression into (\ref{lax_solution}) we obtain
$$
\sqrt{M}D_t^{-1}e_3 = \sqrt{M}R^{-1}_tD_0^{-1}e_3 \iff D_0R_tD_t^{-1}e_3=e_3.
$$
From this it follows that $D_0R_tD_t^{-1}=e^{\alpha A_3}$ and
\begin{equation}
\label{jurd_sol}
R_t=D_0^{-1}e^{\alpha A_3}D_t.
\end{equation}
If $D_t$ is determined, then after differentiating the last expression, we get a differential equation for $\alpha$, which together with $D_t$ determines a solution $R_t$.

We use Euler angles to find such matrix $D_t$ explicitly:
$$
D_t = e^{\phi_3A_3}e^{\phi_2A_1}e^{\phi_1A_3}.
$$
Since $e^{-\phi_3A_3}e_3 = e_3$, we can assume that $\phi_3 = 0$. Then
$$
\vec{p}_t = \sqrt{M}e^{-\phi_1A_3}e^{-\phi_2A_1}e_3.
$$
Equivalently we have
$$
\sqrt{M}\begin{pmatrix}
\cos\phi_1 & \sin\phi_1 & 0 \\
-\sin\phi_1 & \cos\phi_1 & 0 \\
0 & 0 & 1
\end{pmatrix}
\begin{pmatrix}
1 & 0 & 0 \\
0 & \cos\phi_2 & \sin\phi_2\\
0 & -\sin\phi_2 & \cos\phi_2
\end{pmatrix}
\begin{pmatrix}
0 \\
0\\
1
\end{pmatrix}=
\begin{pmatrix}
p_2 \\
p_1\sqrt{1-a^2} \\
p_3
\end{pmatrix}.
$$
As a result we obtain the following system:
\begin{align*}
\sqrt{M}\sin\phi_1\sin\phi_2 &= p_2, \\
\sqrt{M}\cos\phi_1\sin\phi_2 &= p_1\sqrt{1-a^2}, \\
\sqrt{M}\cos\phi_2 &= p_3;
\end{align*}
from which we get expressions for the components of $D_t$:
\begin{align}
\label{euler_angles}
\cos\phi_2 &= \frac{p_3}{\sqrt{M}},\nonumber \\
\sin\phi_2 &= \sqrt{\frac{M-p_3^2}{M}},\\
\cos\phi_1 &= \frac{p_1\sqrt{1-a^2}}{\sqrt{M-p_3^2}}, \nonumber\\
\sin\phi_1 &= \frac{p_2}{\sqrt{M-p_3^2}}.\nonumber
\end{align}

Finally, we are able to find $\alpha$. We redefine $\alpha = \phi_3$ and consider again expression \eqref{jurd_sol}. Let $R_t'= D_0R_t$. Then
$$
R'_t = e^{\phi_3A_3}D_t = e^{\phi_3A_3}e^{\phi_2A_1}e^{\phi_1A_3}
$$
gives a parameterization of $R'_t$ in terms of Euler angles. Since the whole problem is left-invariant and $D_0$ is constant along each geodesic, we have:
$$
\frac{dR'_t}{dt}=R'_t\left( p_2\sqrt{1-a^2}A_1 + p_1A_2 \right).
$$
After plugging $R'_t = D_0R_t$ in the equations above we get
$$
p_2\sqrt{1-a^2}A_1 + p_1A_2 = (R'_t)^{-1}\dot{R_t}'= \dot{\phi}_3e^{-\phi_1A_3}e^{-\phi_2A_1}A_3e^{\phi_2A_1}e^{\phi_1A_3} + \dot{\phi}_2e^{-\phi_1A_3}A_1e^{\phi_1A_3}+\dot{\phi}_1A_3.
$$
The equivalent vector equality is
$$
p_2\sqrt{1-a^2}e_1 + p_1e_2 = \dot{\phi}_3(\sin\phi_1\sin\phi_2e_1 + \cos\phi_1\sin\phi_2e_2 +\cos\phi_2e_3) + \dot{\phi}_2(\cos\phi_1e_1-\sin\phi_1e_2)+\dot{\phi}_1e_3,
$$
and we get a system of equations
\begin{align*}
\dot{\phi}_3\sin\phi_1\sin\phi_2 + \dot{\phi}_2\cos\phi_1 &= p_2\sqrt{1-a^2}, \\
\dot{\phi}_3\cos\phi_1\sin\phi_2 - \dot{\phi}_2\sin\phi_1 &= p_1, \\
\dot{\phi}_3\cos\phi_2 + \dot{\phi}_1 &= 0.
\end{align*}
From the first two equations we get
$$
\dot{\phi}_3\sin\phi_2 = p_2\sqrt{1-a^2}\sin\phi_1 + p_1\cos\phi_1.
$$
If we use expressions (\ref{euler_angles}) for $\cos\phi_1$, $\sin\phi_1$ and $\sin\phi_2$ we obtain a differential equation for $\phi_3$ :
\begin{equation}
\label{phi_equation}
\pm\sqrt{\frac{M-p_3^2}{M}}\dot{\phi}_3 = \pm\frac{\sqrt{1-a^2}}{\sqrt{M-p_3^2}}\left( p_2^2 + p_1^2 \right) \iff \dot{\phi}_3 = \frac{\sqrt{M(1-a^2)}}{M-p_3^2} = \frac{\sqrt{M(1-a^2)}}{1-a^2p_1^2}.
\end{equation}

We recall that the geodesics will be parameterized in the form $R_t = e^{-\phi_1(0)A_3}e^{-\phi_2(0)A_1}e^{\phi_3A_3}e^{\phi_2A_1}e^{\phi_1A_3}$. 
Then
\begin{align}
\label{R_matrix}
R_t&=\begin{pmatrix}
\frac{p_1(0)\sqrt{1-a^2}}{\sqrt{M-p_3(0)^2}} & \frac{p_2(0)}{\sqrt{M-p_3(0)^2}} & 0 \\
-\frac{p_2(0)}{\sqrt{M-p_3(0)^2}} & \frac{p_1(0)\sqrt{1-a^2}}{\sqrt{M-p_3(0)^2}} & 0 \\
0 & 0 & 1
\end{pmatrix}
\begin{pmatrix}
1 & 0 & 0 \\
0 & \frac{p_3(0)}{\sqrt{M}} & \sqrt{\frac{M-p_3(0)^2}{M}} \\
0 & -\sqrt{\frac{M-p_3(0)^2}{M}} & \frac{p_3(0)}{\sqrt{M}}
\end{pmatrix}
\begin{pmatrix}
\cos\phi_3 & -\sin\phi_3 & 0 \\
\sin\phi_3 & \cos\phi_3 & 0 \\
0 & 0 & 1
\end{pmatrix} \nonumber\\
&\times\begin{pmatrix}
1 & 0 & 0 \\
0 & \frac{p_3}{\sqrt{M}} & -\sqrt{\frac{M-p_3^2}{M}} \\
0 & \sqrt{\frac{M-p_3^2}{M}} & \frac{p_3}{\sqrt{M}}
\end{pmatrix}
\begin{pmatrix}
\frac{p_1\sqrt{1-a^2}}{\sqrt{M-p_3^2}} & -\frac{p_2}{\sqrt{M-p_3^2}} & 0 \\
\frac{p_2}{\sqrt{M-p_3^2}} & \frac{p_1\sqrt{1-a^2}}{\sqrt{M-p_3^2}} & 0 \\
0 & 0 & 1
\end{pmatrix},
\end{align}
where $\phi_3$ satisfies (\ref{phi_equation}). Since $R_0 = \Id$ we have $\phi_3(0)=0$.

Now we compute $M$:

\begin{enumerate}
\item In $C_1$ we have:
\begin{align*}
M=k^2\sn^2 (a\theta) + (1-a^2)\dn^2 (a\theta) + a^2k^2\cn^2 (a\theta) = 
1 - a^2(1-k^2).
\end{align*}

\item  In $C_2$:
$$
M = \sn^2 (a\theta) + (1-a^2)\cn^2 (a\theta) + \frac{a^2}{k^2}\dn^2 (a\theta) = \frac{k^2+a^2(1-k^2)}{k^2}.
$$

\item In $C_3$:
$$
M = \tanh^2 (a\theta) + (1-a^2)\frac{1}{\cosh^2 (a\theta)} + \frac{a^2}{\cosh^2 (a\theta)}= \frac{\sinh^2 (a\theta) + 1}{\cosh^2 (a\theta)} = 1.
$$

\item In $C_4$:
$$
M = 1-a^2.
$$

\item In $C_5$:
$$
M = 1.
$$
\end{enumerate}

We summarize all obtained results in Table~\ref{table_1}.

\begin{table}[h]
\caption{Energy bounds, expressions for $p_i$ and values of $M$ for different regions}
\label{table_1}
\begin{center}
\begin{tabular}{|c|c|c|c|}
\hline
Region & $\begin{array}{c}
\text{Energy} \\
\text{bounds}
\end{array}$ & Expressions for  $p_i$ & Value of $M$ \\
\hline
$C_1$ & $(-a^2,a^2)$ & $\begin{array}{l}
p_1 = s_1\dn (a\theta), \\
p_2 = -s_1 k \sn (a\theta), \\
p_3 = a k \cn (a\theta) \\
\end{array}$ & $1-a^2(1-k^2)$\\
\hline 
$C_2$ & $(a^2,+\infty)$ & $\begin{array}{l}
p_1 = \cn (a\theta/k), \\
p_2 = -s_2 \sn (a\theta/k), \\
p_3 = a s_2 \dn (a\theta/k)/k \\
\end{array}$ & $\dfrac{k^2 + a^2(1-k^2)}{k^2}$ \\
\hline
$C_3$
 & $\{a^2\},p_3\neq 0$ & $\begin{array}{l}
p_1 = s_1/\cosh (a\theta), \\
p_2 = -s_1 s_2 \tanh (a\theta), \\
p_3 = s_2 a/\cosh (a\theta) \\
\end{array}$ & $1$ \\
 \hline
$C_4$
 & $\{-a^2\}$ & $\begin{array}{l}
p_1 = (-1)^n, \\
p_2 = 0, \\
p_3 = 0 \\
\end{array}$ & $1-a^2$ \\
 \hline 
 $C_5$
 & $\{a^2\},p_3= 0$ & $\begin{array}{l}
p_1 = 0, \\
p_2 = (-1)^{n}, \\
p_3 = 0 \\
\end{array}$ & $1$ \\
 \hline 
\end{tabular}
\end{center}
\end{table}

To get a full parameterization of geodesics, we only need to integrate \eqref{phi_equation}. We perform the integration separately for different regions $C_i$. For the definition of elliptic integral of the third kind $\Pi(n;\phi,m)$ see Appendix~B.
\begin{enumerate}
\item Integration in $C_1$:
\begin{align*}
\dot{\phi}_3 &= \frac{\sqrt{M(1-a^2)}}{1-a^2\dn^2(a\theta)} = \frac{\sqrt{M(1-a^2)}}{1-a^2(1-k^2\sn^2(a\theta))}= \sqrt{\frac{M}{1-a^2}}\frac{1}{1+\frac{a^2k^2}{1-a^2}\sn^2(a\theta)}.
\end{align*}
\begin{align*}
\phi_3 &= \sqrt{\frac{M}{1-a^2}}\int_0^t\frac{d\tau}{1+\frac{a^2k^2}{1-a^2}\sn^2(a(\tau + \theta_0))} = \sqrt{\frac{M}{a^2(1-a^2)}}\int_{a\theta_0}^{a\theta}\frac{d\alpha}{1+\frac{a^2k^2}{1-a^2}\sn^2\alpha} \\
 &= \sqrt{\frac{1-a^2(1-k^2)}{a^2(1-a^2)}}\left( \Pi\left( \frac{a^2k^2}{a^2-1}; \am(a\theta) \right) - \Pi\left( \frac{a^2k^2}{a^2-1}; \am(a\theta_0) \right) \right).
\end{align*}

\item Integration in $C_2$:
\begin{align*}
\dot{\phi}_3 &= \frac{\sqrt{M(1-a^2)}}{1-a^2\cn^2\left(\dfrac{a\theta}{k} \right)} =  \frac{\sqrt{M(1-a^2)}}{1-a^2\left(1-\sn^2\left( \dfrac{a\theta}{k} \right) \right)} = \sqrt{\frac{M}{1-a^2}}\frac{1}{1+\frac{a^2}{1-a^2}\sn^2\left( \dfrac{a\theta}{k} \right)}.
\end{align*}
\begin{align*}
\phi_3 &= \sqrt{\frac{M}{1-a^2}}\int_0^t\frac{d\tau}{1+\frac{a^2}{1-a^2}\sn^2\left( \dfrac{a}{k}(\theta_0+\tau) \right)} = k\sqrt{\frac{M}{a^2(1-a^2)}}\int_{\frac{a\theta_0}{k}}^{\frac{a\theta}{k}}\frac{d\alpha}{1+\frac{a^2}{1-a^2}\sn^2\alpha} \\
&= \sqrt{\frac{k^2+a^2(1-k^2)}{a^2(1-a^2)}}\left( \Pi\left( \frac{a^2}{a^2-1}; \am\left(\dfrac{a\theta}{k}\right) \right) - \Pi\left( \frac{a^2}{a^2-1}; \am\left(\dfrac{a\theta_0}{k}\right)\right) \right).
\end{align*}

\item Integration in $C_3$:
\begin{align*}
\dot{\phi}_3 &= \frac{\sqrt{M(1-a^2)}}{1-\frac{a^2}{\cosh^2 (a\theta)}} = \frac{\sqrt{M(1-a^2)}\cosh^2(a\theta)}{\cosh^2a\theta - a^2} = \sqrt{M(1-a^2)}\left(1 - \frac{1}{1-\dfrac{1}{a^2}\cosh^2 (a\theta)}\right).
\end{align*}
We compute the following integral:
\begin{align*}
-\int_0^t\frac{d\tau}{1-\dfrac{1}{a^2}\cosh^2 a(t+\theta_0)}= -\frac{1}{a}\int_{a\theta_0}^{a\theta}\frac{d\alpha}{1-\dfrac{1}{a^2}\cosh^2 \alpha}. 
\end{align*}

After the change of variable $\tanh\alpha = y$ we get $d\alpha = \dfrac{dy}{1-y^2}$ and
\begin{align*}
&-\frac{1}{a}\int_{a\theta_0}^{a\theta}\frac{d\alpha}{1-\dfrac{1}{a^2}\cosh^2 \alpha} \\
&= -\frac{1}{a}\int_{\tanh a \theta_0}^{\tanh a\theta}\frac{dy}{(1-y^2)\left( 1-\frac{1}{a^2(1-y^2)} \right)}=\frac{1}{a}\int_{\tanh a \theta_0}^{\tanh a\theta}\frac{dy}{y^2 + \frac{1-a^2}{a^2}} = \left.\frac{1}{\sqrt{1-a^2}}\arctan\frac{ay}{\sqrt{1-a^2}}\right|_{\tanh a \theta_0}^{\tanh a \theta}.
\end{align*}

As a result we obtain
$$
\phi_3 =\sqrt{1-a^2}t  + \left( \arctan\left( \frac{a}{\sqrt{1-a^2}}\tanh a\theta \right) - \arctan\left( \frac{a}{\sqrt{1-a^2}}\tanh a\theta_0 \right)  \right).
$$

\item Integration in $C_4$:
$$
\dot{\phi}_3 = \frac{\sqrt{M(1-a^2)}}{1-a^2\cos^2(\pi n)} = \frac{\sqrt{M(1-a^2)}}{1-a^2} = 1\, \, \Rightarrow\,\, \phi_3 = t.
$$

\item Integration in $C_5$:
$$
\dot{\phi}_3 = \frac{\sqrt{M(1-a^2)}}{1-a^2\cos^2\left(\frac{\pi}{2}+\pi n\right)} = \sqrt{M(1-a^2)}= \sqrt{1-a^2}\,\, \Rightarrow \,\, \phi_3 = \sqrt{1-a^2}t.
$$
\end{enumerate}

From the parameterization it follows that geodesics which correspond to regions $C_4$ and $C_5$ are uniform rotations around $e_1$ or $e_2$. 

At the end of this section we would like to discuss how to obtain a parameterization of sub-Riemannian geodesics on $S^3$, which is a double cover of SO(3). Consider a family of sub-Riemannian structures $(S^3,\Delta',g')$ where
$$
\Delta' = \spann\{X'_1,X'_2\}, \qquad X'_1 = j/2, \qquad X'_2 =\sqrt{1-a^2}i/2
$$
and
$$
g(X'_l,X'_m) = \delta_{lm}, \qquad l,m=1,2.
$$

Since $X'_l$ satisfy (\ref{structure_equations}), the sub-Riemannian manifolds $(S^3,\Delta',g')$ and $(SO(3),\Delta,g)$ are locally isometric. The parameterization of sub-Riemannian geodesics on $S^3$ can be obtained in the same way as in SO(3). The Hamiltonian system of PMP for the sub-Riemannian problem on $S^3$ is
\begin{align}
\dot{q} &= \frac{q}{2}(p_1j+p_2\sqrt{1-a^2}i),\label{quaternion_system}\\
\dot{p}_1 &= p_2p_3,\nonumber\\
\dot{p}_2 &= -p_1p_3,\nonumber\\
\dot{p}_3 &= a^2p_0p_1;\nonumber
\end{align}
 The horizontal subsystem can be integrated by following the same approach as previously discussed by simply rewriting all expressions in quaternion language using isomorphism between so(3), $\mathbb{R}^3$ and $\I$. As a result we get a parameterization
\begin{equation}
\label{quaternion_param}
q(t) = q_0e^{-\frac{\phi_1(0)}{2}k}e^{-\frac{\phi_2(0)}{2}i}e^{\frac{\phi_3(t)}{2}k}e^{\frac{\phi_2(t)}{2}i}e^{\frac{\phi_1(t)}{2}k}
\end{equation}
where $e^{\frac{\phi_2(t)}{2}i}, e^{\frac{\phi_1(t)}{2}k}$ are quaternions that correspond to rotations $e^{\phi_2(t)A_1}, e^{\phi_1(t)A_3}$ and $\phi_1(t)$, $\phi_2(t)$ and $\phi_3(t)$ are exactly the same as in (\ref{euler_angles}), (\ref{phi_equation}). 

In the next section we will use this parameterization of sub-Riemannian geodesics on SO(3) and $S^3$ to study periodic geodesics on SO(3).

\section{Periodic geodesics on SO(3)}
\label{sec:periodic}

In this section we describe periodic geodesics of the sub-Riemannian problems on SO(3) and study their topological properties.

First we prove the following lemma.
\begin{lemma}
\label{function_lemma}
Consider the following functions:
$$
G_1(a,k) = \sqrt{\dfrac{1-a^2(1-k^2)}{a^2(1-a^2)}}\Pi\left( \dfrac{a^2k^2}{a^2-1}; k^2 \right),
$$
$$
G_2(a,k) = \sqrt{\dfrac{k^2+a^2(1-k^2)}{a^2(1-a^2)}}\Pi\left( \dfrac{a^2}{a^2-1};k^2 \right).
$$
where $\Pi(n;k)$ is the complete elliptic integral of the third kind (see  Appendix B for the definition).

For any fixed $a\in(0,1)$ the functions $G_1(a,k)$ and $G_2(a,k)$ are positive, smooth and increasing on the interval $k\in[0,1)$. Their limit values at $k=0$ and $k=1$ are
$$
\lim_{k\to 0}G_1(a,k) = \dfrac{\pi}{2a}, \qquad \lim_{k\to 1-0}G_1(a,k) = +\infty; 
$$
$$
\lim_{k\to 0}G_2(a,k) = \dfrac{\pi}{2}, \qquad \lim_{k\to 1-0}G_2(a,k) = +\infty;
$$

\end{lemma}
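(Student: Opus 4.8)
Throughout set $b=1-a^2\in(0,1)$ and write $B=\sqrt{1-k^2\sin^2\theta}$. Using the integral definition of $\Pi$ together with the elementary identities
\[
1-\frac{a^2k^2}{a^2-1}\sin^2\theta=\frac{1-a^2B^2}{b},\qquad 1-\frac{a^2}{a^2-1}\sin^2\theta=\frac{1-a^2\cos^2\theta}{b},
\]
I would first rewrite the two functions as
\[
G_1=\frac{\sqrt{bM_1}}{a}\int_0^{\pi/2}\frac{d\theta}{(1-a^2B^2)\,B},\qquad G_2=\frac{\sqrt{bN_2}}{a}\int_0^{\pi/2}\frac{d\theta}{(1-a^2\cos^2\theta)\,B},
\]
where $M_1=b+a^2k^2$ and $N_2=a^2+bk^2$. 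Since $a\in(0,1)$ both characteristics are $\le 0$, so the factors $1-a^2B^2$ and $1-a^2\cos^2\theta$ stay in $[b,1]$; hence the integrands are smooth and strictly positive for $k\in[0,1)$ and both prefactors are positive. This yields positivity at once, and smoothness follows because for $k<1$ the integrands have no singularity and one may differentiate under the integral sign (equivalently $\Pi(n;m)$ is analytic for $n<1$, $0\le m<1$, composed here with smooth functions of $k$).

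For the limiting values I would use $\Pi(n;0)=\tfrac{\pi}{2}(1-n)^{-1/2}$ at $k=0$: inserting the values of the prefactors at $k=0$ gives $G_1(a,0)=\tfrac1a\cdot\tfrac{\pi}{2}=\tfrac{\pi}{2a}$ and $G_2(a,0)=\tfrac{1}{\sqrt b}\cdot\tfrac{\pi}{2}\sqrt b=\tfrac{\pi}{2}$. As $k\to1-0$ the factor $B$ tends to $|\cos\theta|$, so each integrand behaves like $1/\cos\theta$ near $\theta=\pi/2$ and the integrals diverge logarithmically, while the prefactors tend to the finite positive limit $\sqrt b/a$ (since $M_1,N_2\to1$); hence $G_1,G_2\to+\infty$.

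Monotonicity of $G_2$ is immediate from the rewriting: the characteristic factor $1-a^2\cos^2\theta$ does not depend on $k$, the factor $B^{-1}$ increases with $k$ pointwise, so the integral is increasing, and $\sqrt{bN_2}$ is increasing as well; a product of two positive increasing functions is increasing.

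The monotonicity of $G_1$ is the only genuine difficulty, since there the characteristic factor $1-a^2B^2$ also depends on $k$ and works against $B^{-1}$. I would pass to the logarithmic derivative and reduce $\partial_k G_1>0$ to the single inequality $M_1\,\partial_k I_1+a^2k\,I_1>0$, where $I_1=\int_0^{\pi/2}(1-a^2B^2)^{-1}B^{-1}d\theta$. Differentiating under the integral and using $\partial_k(1-a^2B^2)=2a^2k\sin^2\theta$ and $\partial_k B=-k\sin^2\theta/B$ turns the left-hand side into an explicit integral; however its integrand is sign-indefinite — at $\theta=\pi/2$ it carries the factor $1-2a^2(1-k^2)$, which is negative for $a$ near $1$ and $k$ near $0$ — so pointwise positivity fails and one must integrate. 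To extract a definite sign I would integrate by parts against $\frac{d}{d\theta}\!\left[\frac{\sin\theta\cos\theta}{(1-a^2B^2)B}\right]$, whose boundary contributions vanish, thereby lowering the power $B^{-3}$ and rewriting $M_1\,\partial_k I_1+a^2kI_1$ as a fixed linear combination of the standard complete integrals $K(k)$, $E(k)$ and $\Pi(\cdot;k^2)$ with the moving singularity removed. The remaining step is to verify that this combination is positive on $\{a\in(0,1),\,k\in(0,1)\}$, which can be reduced to a known inequality among $K$, $E$, $\Pi$ (alternatively, by checking positivity of its power-series coefficients in $k$). This reduction and the final sign check are the laborious heart of the argument; everything else is bookkeeping.
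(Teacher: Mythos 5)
Most of your proposal is sound, and one piece of it is actually tidier than the paper's own argument: your monotonicity proof for $G_2$. After your rewriting, the characteristic factor $1-a^2\cos^2\theta$ is independent of $k$, the factor $B^{-1}$ increases pointwise in $k$, and the prefactor $\sqrt{bN_2}$ increases, so $G_2$ is increasing by inspection; the paper instead computes the closed-form derivative $\partial_k G_2 = kE(k^2)\bigl/\bigl(a^2(1-k^2)\sqrt{\tfrac{1}{1-a^2}+\tfrac{k^2}{a^2}}\bigr)$ from the derivative identities for $\Pi$. Your treatment of positivity, smoothness, the values at $k=0$ (via $\Pi(n;0)=\tfrac{\pi}{2}(1-n)^{-1/2}$), and the divergence as $k\to1-0$ (logarithmic blow-up of the integrals with bounded prefactors, versus the paper's lower bound $G_i\geq (1-a^2)K(k^2)$) are all correct, modulo a routine Fatou-type justification for passing to the limit under the integral.

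The genuine gap is exactly where you flag it: monotonicity of $G_1$. Your reduction of $\partial_k G_1>0$ to $M_1\,\partial_k I_1 + a^2k\,I_1>0$ is correct, but the proposal then stops at a plan --- integrate by parts, express the result through $K$, $E$, $\Pi$, and ``verify that this combination is positive'' by an unnamed known inequality or a power-series check --- without ever producing the combination or the inequality. Since this is the decisive step of the lemma, the proof is incomplete as written. The missing piece is available in closed form and is how the paper proceeds: applying the chain rule to $\Pi\bigl(n(k);k^2\bigr)$ with the moving characteristic $n(k)=a^2k^2/(a^2-1)$, using the derivative identities (\ref{elliptic3_der1}) and (\ref{elliptic3_der2}) for $\Pi$ in both the modulus and the characteristic, all $\Pi$-terms cancel identically and one is left with
$$
\frac{\partial G_1}{\partial k} \;=\; \frac{E(k^2)-(1-k^2)K(k^2)}{a^2k(1-k^2)\sqrt{\tfrac{1}{a^2}+\tfrac{k^2}{1-a^2}}},
$$
so that positivity reduces to the classical inequality $E(k^2)>(1-k^2)K(k^2)$ for $k\in(0,1)$, which follows because $E-(1-k^2)K$ vanishes at $k=0$ and has $k$-derivative $kK(k^2)>0$ (the paper verifies in addition that $\partial_kG_1\to 0$ as $k\to 0$ via the expansions of $K$ and $E$). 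Until you exhibit this cancellation --- or genuinely complete your integration-by-parts or series route, which would in effect re-derive the same expression --- the increasing property of $G_1$, and with it the lemma, remains unproven; everything else in your proposal goes through.
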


\begin{proof}
The smoothness property follows form the fact that $G_i(a,k)$ is a product of two smooth functions when $k\in[0,1)$. We can differentiate $G_2(a,k)$ with respect to $k$ using formula (\ref{elliptic3_der1}):
$$
\frac{\partial}{\partial k}G_2(a,k) = \frac{kE(k^2)}{a^2(1-k^2)\sqrt{\frac{1}{1-a^2}+\frac{k^2}{a^2}}}.
$$
This expression is non-negative and equal to zero only if $k=0$. Thus the function $G_2(a,k)$ is increasing for $k\in[0,1)$. For $k=0$ we have
$$
G_2(a,0) = \frac{\pi}{2}.
$$
It follows that $G_2(a,k)$ is positive.

Using formulas (\ref{elliptic3_der1}) and (\ref{elliptic3_der2}) we obtain an expression for the derivative of $G_1(a,k)$ with respect to $k$:
\begin{equation}
\label{derivative_fraction}
\frac{\partial G_1}{\partial k} = \frac{E(k^2) - (1-k^2)K(k^2)}{a^2k(1-k^2)\sqrt{\frac{1}{a^2}+\frac{k^2}{1-a^2}}}.
\end{equation}
We want to show that $\partial G_1/\partial k \geq 0, \, k\in(0,1)$.  First differentiate the numerator of fraction (\ref{derivative_fraction}) with respect to $k$:
$$
\frac{\partial}{\partial k}(E(k^2) - (1-k^2)K(k^2)) = kK(k^2) > 0, \quad 0< k < 1.
$$
Since the denominator of (\ref{derivative_fraction}) is positive for any $k\in(0,1)$, and the derivative of the numerator is positive, it is enough to show that the limit of $\partial G_1/\partial k$ when $k\to 0$ is non-negative. Using the asymptotic expansions (\ref{elliptic1_assympt}) and (\ref{elliptic2_assympt}) for $K(k^2)$ and $E(k^2)$  we get
$$
\lim_{k\to 0} \frac{\partial G_1}{\partial k} = \lim_{k\to 0}\frac{\pi(k^2+o(k^2))}{2a^2k(1-k^2)\sqrt{\frac{1}{a^2}+\frac{k^2}{1-a^2}}} = 0.
$$
Therefore $G_1(a,k)$ is increasing at the interval $k\in[0,1)$ for any $a\in(0,1)$. 

For $k=0$ we have
$$
G_1(a,0) = \frac{\pi}{2a}.
$$
Then $G_1(a,k)$ is positive for $a\in(0,1),k\in[0,1)$.

From the definition of the elliptic integral of the third kind it follows that for $a\in(0,1)$
$$
G_1(a,k) \geq \frac{K(k^2)}{1-\frac{a^2}{a^2-1}}, \qquad G_2(a,k) \geq \frac{K(k^2)}{1-\frac{a^2}{a^2-1}}
$$
and from (\ref{elliptic1_limit}) we have, that $K(k^2)\to +\infty$ when $k\to 1-0$. Therefore  $G_1(a,k)\to +\infty$ and $G_2(a,k)\to +\infty$ when $k\to 1-0$.
\end{proof}

\begin{proposition}
For family \textsc{(\ref{R_system})}--\textsc{(\ref{sr_action})} of sub-Riemannian problems on $\SO(3)$ for any value of $a\in(0,1)$ there exists an infinite number of periodic geodesics.
\end{proposition}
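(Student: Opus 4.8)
The plan is to translate the periodicity of a geodesic into a single arithmetic condition on the angle $\phi_3$ and then to invoke the monotonicity and limit values established in Lemma~\ref{function_lemma} to produce infinitely many admissible parameter values. I will carry out the argument in the region $C_1$, where the pendulum librates; the region $C_2$ works identically with $G_2$ in place of $G_1$. A curve $t\mapsto R_t$ is periodic exactly when the full state $(R_t,\vec p_t)$ of the Hamiltonian system returns to its initial value. In $C_1$ the vertical subsystem \eqref{vertical} is an oscillating pendulum, so $\vec p_t$ is automatically periodic; reading the period off the elliptic coordinates, $\vec p_t$ returns after $T_p = 4K(k^2)/a$, one full period of $\sn$. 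Thus periodicity of $R_t$ is equivalent to $R_{nT_p}=\Id$ for some $n\in\N$.

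To exploit this, I would use the parameterization \eqref{jurd_sol}, writing $R_t = D_0^{-1}e^{\phi_3(t)A_3}D_t$ with $D_t = e^{\phi_2(t)A_1}e^{\phi_1(t)A_3}$ and $\phi_1,\phi_2$ given by \eqref{euler_angles}. When $\vec p_t$ returns after $T_p$, the quantities $\cos\phi_1,\sin\phi_1$ and $\cos\phi_2,\sin\phi_2$ return to their initial values, so $\phi_1$ returns modulo $2\pi$ and $\phi_2\in[0,\pi]$ returns exactly; hence $D_{T_p}=D_0$ as an element of $\SO(3)$. Since the right-hand side of \eqref{phi_equation} depends only on $\vec p_t$, it is $T_p$-periodic, so $\phi_3$ accumulates linearly: $\phi_3(nT_p)=n\Delta$, where $\Delta:=\phi_3(T_p)$. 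Therefore
$$
R_{nT_p}=D_0^{-1}e^{n\Delta A_3}D_0,
$$
which equals $\Id$ precisely when $n\Delta\in 2\pi\mathbb{Z}$. Such an $n$ exists if and only if $\Delta/2\pi\in\mathbb{Q}$.

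It remains to identify $\Delta$. Over one period $a\theta$ advances by $4K(k^2)$, so $\am(a\theta)$ advances by $2\pi$ and the incomplete integral of the third kind in the $C_1$ formula for $\phi_3$ advances by four times the complete integral; comparing with the definition of $G_1$ I expect $\Delta = 4G_1(a,k)$, so that the closure condition becomes $\tfrac{2}{\pi}G_1(a,k)\in\mathbb{Q}$. Fixing any $a\in(0,1)$, Lemma~\ref{function_lemma} says that $\tfrac{2}{\pi}G_1(a,k)$ is continuous and strictly increasing on $[0,1)$, with value $1/a$ at $k=0$ and limit $+\infty$ as $k\to 1-0$. Being a continuous increasing surjection onto the unbounded interval $[1/a,+\infty)$, it attains every rational number in this interval; since there are infinitely many such rationals, and distinct values of $k$ give geodesics with distinct momenta $\vec p_t$, this yields infinitely many distinct periodic geodesics. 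The main obstacle is the bookkeeping in the reduction --- verifying that $D_{T_p}=D_0$ and that $\phi_3$ grows by the fixed increment $\Delta$ per period --- together with the identification $\Delta = 4G_1(a,k)$; once these are in place the conclusion is immediate from the lemma.
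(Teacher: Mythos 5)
Your proposal is correct and takes essentially the same route as the paper: both reduce periodicity to the closure condition $\phi_3(mT)=2\pi n$ using the periodicity of $p_t$ and the Euler-angle parameterization, identify the per-period increment of $\phi_3$ with $4G_1(a,k)$ (resp.\ $4G_2$) via the addition formulas, and then invoke Lemma~\ref{function_lemma} to solve $G_1(a,k)=\tfrac{\pi}{2}\tfrac{n}{m}$ for the infinitely many rationals $n/m>1/a$. Your explicit bookkeeping --- checking $D_{T_p}=D_0$ and writing $R_{nT_p}=D_0^{-1}e^{n\Delta A_3}D_0$ --- merely spells out the step the paper reads off directly from \eqref{R_matrix}.
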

\begin{proof}
If a geodesic is periodic, then the covector $p_t$ must be periodic as well with some period~$T$. In the domains $C_1$ and $C_2$ the period $T$ is equal to $4K(k^2)/a$ and $4kK(k^2)/a$ correspondingly. Thus the period of a closed extremal curve must be equal to $mT$,  $m\in\mathbb{N}$. From this it follows that $e^{\phi_1(mT)A_3} = e^{\phi_1(0)A_3}$, $e^{\phi_2(mT)A_1} = e^{\phi_2(0)A_1}$ and from (\ref{R_matrix}) we get $e^{\phi_3(mT)A_3} = \Id$. This is equivalent to $\phi_3(mT) = 2\pi n$. Since $\phi_3 > 0$, we have $n\in\mathbb{N}$.

Now we consider geodesics for which $p_t\in C_1$. From the addition formulas (\ref{elliptic3_sum}) and (\ref{am_sum}) we get

$$
\phi_3(mT) = 2\pi n
\iff \sqrt{\frac{1-a^2(1-k^2)}{a^2(1-a^2)}} \Pi\left( \frac{a^2k^2}{a^2-1}; k^2 \right)  = \frac{\pi}{2}\frac{n}{m}.
$$
Different irreducible fractions $n/m \in \mathbb{Q}_+$ correspond to different periodic geodesics and the existence of an infinite number of  periodic geodesics is reduced to the problem of finding solutions to the equations
\begin{equation}
\label{inf_geodesic}
G_1(a,k) = \frac{\pi}{2}\frac{n}{m}.
\end{equation}
By Lemma \ref{function_lemma}, the function $G_1(a,k)$ is continuous, increasing and for any fixed $a\in(0,1)$ its image is the half-interval $[\pi/(2a),+\infty)$. Therefore for every irreducible fraction $n/m \in \mathbb{Q}_+$ that satisfies
\begin{equation}
\label{condition_1}
\frac{n}{m}>\frac{1}{a}
\end{equation}
there exists a unique solution of (\ref{inf_geodesic}). It is obvious that the number of fractions $n/m \in \mathbb{Q}_+$ that satisfy this condition is infinite, thus the existence of an infinite number of closed geodesics follows.

Using exactly the same argument we prove that there is an infinite number of periodic geodesics such that the corresponding covector $p_t\in C_2$ and the following condition is satisfied:
\begin{equation}
\label{condition_2}
\frac{n}{m}>1.
\end{equation}
If $p_t\in C_2$ then the initial covector $p_0\in C_2$ of the corresponding extremal can be determined from the equation
\begin{equation}
\label{inf_geodesic2}
G_2(a,k)=\frac{\pi}{2}\frac{n}{m}.
\end{equation}

\end{proof}

Apart from the periodic geodesics found in $C_1$ and $C_2$, there are periodic geodesics that correspond to points in $C_4$ and $C_5$. In this case extremal trajectories are just rotations around $e_1$ and $e_2$. There are no other periodic geodesics on SO(3). In fact, for extremal trajectories from $C_3$ the curve $p_t$ is never periodic, and periodic geodesics from $C_1$ and $C_2$ are completely described by conditions (\ref{inf_geodesic}) and (\ref{inf_geodesic2}).

\begin{figure}[h]
\begin{center}
\includegraphics[scale=0.8]{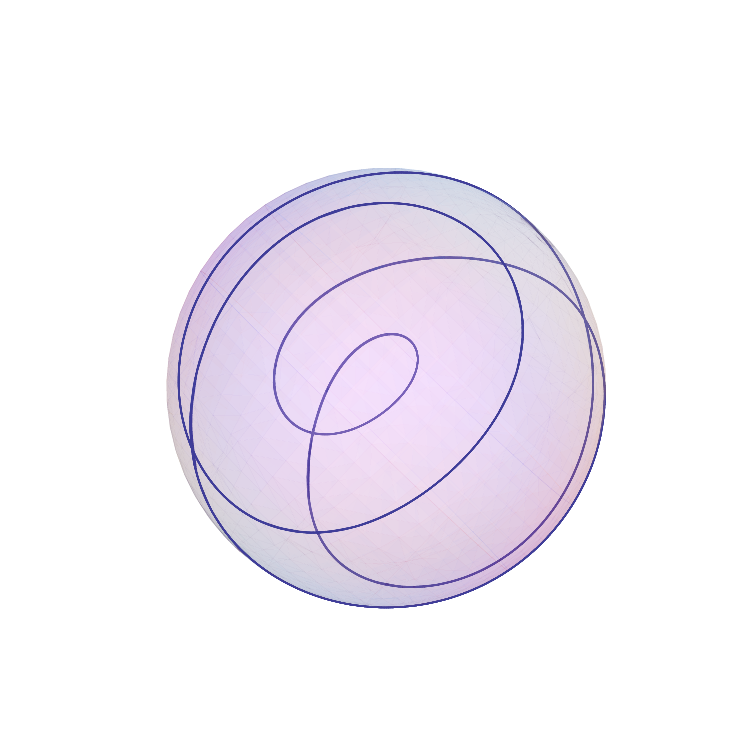}\includegraphics[scale=0.8]{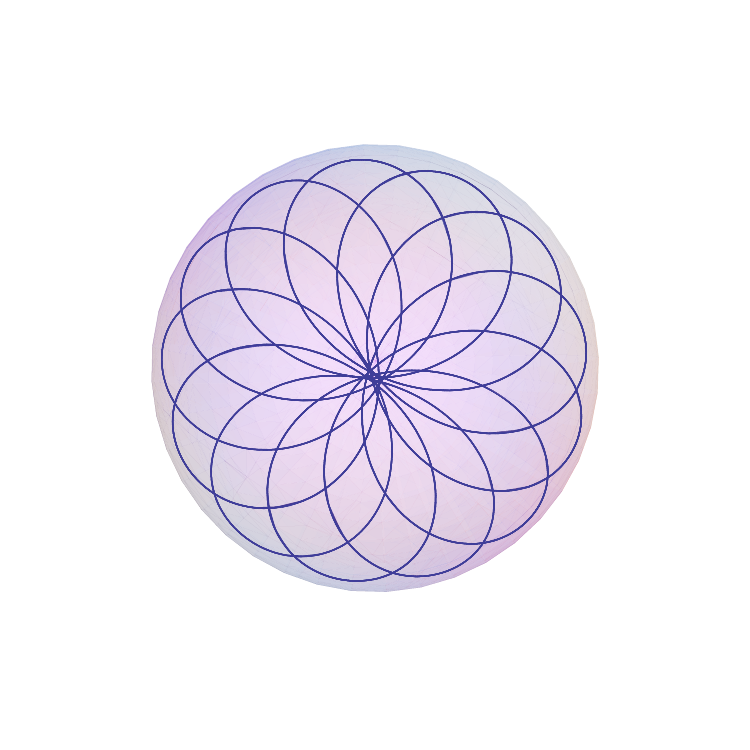}
\caption{Projections of periodic geodesics on $S^2$ with $p_t\in C_1$ and $p_t\in C_2$.}
\end{center}
\end{figure}

Since  $\pi_1(\SO(3))=\mathbb{Z}/2\mathbb{Z}$ there are only two homotopy classes of closed paths in SO(3), and as consequence all non-contractible loops are homotopic. Next we determine which periodic geodesics are null-homotopic. It is well known that a rotation around a fixed vector by $2\pi$ is not contractible in SO(3), but a rotation by $4\pi$ is~\cite{mountain}. Thus it is natural to study contractability of loops on SO(3) when they close up after the first period.

We need two following theorems:

\begin{theorem}
\label{theorem_path}~\textsc{\cite{topology}}
Let $p:X\rightarrow B$ be a covering map and $x_0\in X, b_0 \in B$ be such points that $p(x_0) = b_0$. For any path, i.e., any continuous curve $\gamma: [0,1] \rightarrow B$, that starts at $b_0$, there exists a unique path $\tilde{\gamma}: [0,1] \rightarrow X$ that starts at $x_0$ and such that $\gamma = p\circ\tilde{\gamma}$. The curve $\tilde{\gamma}$ is called the covering path for $\gamma$.
\end{theorem}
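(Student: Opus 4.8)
The plan is to prove the classical unique path-lifting property of covering maps by combining the compactness of $[0,1]$ with the local triviality built into the definition of a covering. First I would recall the structure we may use: since $p\colon X\to B$ is a covering map, every point $b\in B$ admits an open \emph{evenly covered} neighbourhood $U$, meaning that $p^{-1}(U)$ is a disjoint union of open sets (``sheets''), each carried homeomorphically onto $U$ by $p$. This is the only geometric input needed; everything else is point-set topology.

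For \emph{existence}, the idea is to lift $\gamma$ piece by piece. I would cover $[0,1]$ by the open sets $\gamma^{-1}(U)$ as $U$ ranges over evenly covered neighbourhoods of the points $\gamma(t)$. Compactness of $[0,1]$ and the Lebesgue number lemma then produce a partition $0=t_0<t_1<\dots<t_N=1$ such that each arc $\gamma([t_{i-1},t_i])$ lies inside a single evenly covered neighbourhood $U_i$. I then define $\tilde\gamma$ inductively: on $[0,t_1]$ there is a unique sheet $V_1$ over $U_1$ containing the prescribed starting point $x_0$, and I set $\tilde\gamma=(p|_{V_1})^{-1}\circ\gamma$; assuming $\tilde\gamma$ has been built up to $t_i$, I take $V_{i+1}$ to be the unique sheet over $U_{i+1}$ containing $\tilde\gamma(t_i)$ and extend by $\tilde\gamma=(p|_{V_{i+1}})^{-1}\circ\gamma$ on $[t_i,t_{i+1}]$. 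Consecutive pieces agree at the common endpoint $t_i$ by construction, so the pasting (gluing) lemma gives a single continuous map $\tilde\gamma\colon[0,1]\to X$ with $p\circ\tilde\gamma=\gamma$ and $\tilde\gamma(0)=x_0$.

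For \emph{uniqueness}, I would take two lifts $\tilde\gamma_1,\tilde\gamma_2$ with the same initial value $x_0$ and examine the set $A=\{t\in[0,1]:\tilde\gamma_1(t)=\tilde\gamma_2(t)\}$. It is nonempty since $0\in A$. Using the disjointness of sheets one shows $A$ is both open and closed: near any fixed $t$ the two lifts map into sheets of a common evenly covered neighbourhood of $\gamma(t)$, and since distinct sheets are disjoint the lifts either coincide throughout a neighbourhood of $t$ or stay in disjoint sheets there. Connectedness of $[0,1]$ then forces $A=[0,1]$, so $\tilde\gamma_1=\tilde\gamma_2$. The main obstacle is genuinely the finite-subdivision step: the essential work is the compactness/Lebesgue-number argument that reduces the lift to finitely many trivial pieces and the continuity check at the gluing points, whereas the choice of sheet at each stage is forced once the previous endpoint is known and is therefore routine. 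Since the statement is a standard result cited from \textsc{\cite{topology}}, in the paper I would simply invoke it, but the sketch above is the self-contained proof I would reconstruct if needed.
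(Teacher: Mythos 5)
Your proposal is correct and requires no repair: the Lebesgue-number subdivision with sheet-by-sheet lifting for existence, and the clopen-set argument on $\{t:\tilde\gamma_1(t)=\tilde\gamma_2(t)\}$ for uniqueness, constitute precisely the standard proof of the path-lifting property, which the paper itself does not prove but simply invokes by citing \cite{topology}. Your concluding remark that one would cite the result in the paper matches exactly what the authors do.
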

\begin{theorem}
\label{theorem_homotopy}~\textsc{\cite{topology2}}
Let $p:X\rightarrow B$ be a covering map where $X$ is the universal cover of $B$. A closed continuous curve $\gamma: [0,1] \rightarrow B$ is null-homotopic if and only if all its covering paths on $X$ are closed.
\end{theorem}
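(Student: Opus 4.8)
\textit{Proof plan.} The plan is to establish both implications from the two structural properties of covering maps: unique path lifting (Theorem~\ref{theorem_path}) together with the homotopy lifting property, combined with the fact that the universal cover $X$ is simply connected. Throughout I fix the base point $b_0 = \gamma(0) = \gamma(1)$ and a point $x_0 \in p^{-1}(b_0)$, and I take $\gamma$ to be null-homotopic in the sense of a homotopy relative to its endpoints. I will use repeatedly the elementary observation that a lift of a constant path is itself constant: by uniqueness of lifts this is forced, since the constant path at a chosen preimage is already one such lift.

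For the ``if'' direction, suppose every covering path of $\gamma$ is closed. In particular the lift $\tilde{\gamma}$ starting at $x_0$ satisfies $\tilde{\gamma}(1) = \tilde{\gamma}(0) = x_0$, so $\tilde{\gamma}$ is a loop in $X$ based at $x_0$. Since $X$ is the universal cover it is simply connected, so $\tilde{\gamma}$ admits a homotopy $\tilde{H} : [0,1]\times[0,1] \to X$, rel endpoints, to the constant loop at $x_0$. Composing with $p$ gives $H = p\circ\tilde{H}$, a homotopy in $B$ rel endpoints from $\gamma = p\circ\tilde{\gamma}$ to the constant loop at $b_0$. Hence $\gamma$ is null-homotopic. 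I note that this direction only uses that one covering path is closed.

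For the ``only if'' direction, suppose $\gamma$ is null-homotopic via $H : [0,1]\times[0,1]\to B$ with $H(\cdot,0) = \gamma$, $H(\cdot,1)\equiv b_0$, and $H(0,\cdot) = H(1,\cdot)\equiv b_0$. Fix any $x_0' \in p^{-1}(b_0)$. By the homotopy lifting property there is a lift $\tilde{H}$ with $\tilde{H}(0,0) = x_0'$, and its bottom edge $\tilde{H}(\cdot,0)$ is exactly the covering path of $\gamma$ starting at $x_0'$. The top edge $\tilde{H}(\cdot,1)$, the left edge $\tilde{H}(0,\cdot)$ and the right edge $\tilde{H}(1,\cdot)$ all lie over constant paths in $B$, so by the observation above each of them is constant. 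Chasing around the boundary of the square then gives $\tilde{H}(\cdot,0)(0) = \tilde{H}(0,0) = \tilde{H}(0,1)$ and $\tilde{H}(\cdot,0)(1) = \tilde{H}(1,0) = \tilde{H}(1,1) = \tilde{H}(0,1)$, so this covering path has equal endpoints and is closed. Since $x_0'$ was an arbitrary point of the fiber, every covering path of $\gamma$ is closed.

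The only genuinely delicate step is the bookkeeping in the forward direction: one must check that all four edges of the lifted square behave as claimed and that the endpoint identifications close up, and this rests entirely on the rigidity of lifts of constant paths. The homotopy lifting property itself I take as a standard feature of covering maps. Equivalently, the whole statement can be repackaged through the monodromy action of $\pi_1(B,b_0)$ on the fiber $p^{-1}(b_0)$: for the universal cover this action is free and transitive, so the class $[\gamma]$ fixes one fiber point exactly when it is trivial, which is exactly when it fixes all fiber points, matching the two sides of the equivalence.
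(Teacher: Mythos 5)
Your proof is correct. There is nothing in the paper to compare it against: the paper states this theorem as a known classical fact with a citation to \cite{topology2} and gives no proof, so your argument stands on its own. What you give is the standard covering-space argument --- for the ``if'' direction, closedness of one lift plus simple connectivity of the universal cover, pushed down through $p$; for the ``only if'' direction, the homotopy lifting property together with the rigidity of lifts of constant paths (which is exactly the uniqueness clause of the paper's Theorem~\ref{theorem_path}), followed by a correct boundary chase on the lifted square. One small point worth making explicit: you take ``null-homotopic'' to mean homotopic to a constant rel endpoints, while the statement could also be read as free null-homotopy; this creates no gap, since a loop is freely null-homotopic if and only if it is null-homotopic rel its basepoint --- alternatively, your square argument adapts directly, because under a free null-homotopy the left and right edges of the lifted square are lifts of the same track in $B$ terminating at the same point (the top edge being constant), hence coincide by uniqueness of lifts, which again closes the bottom edge. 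Your closing monodromy remark is also accurate: for the universal cover the action of $\pi_1(B,b_0)$ on the fiber is free and transitive, so $[\gamma]$ fixes one point of the fiber exactly when it is trivial, exactly when it fixes every point, which is a clean one-line repackaging of the equivalence.
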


These theorems allow us to find null-homotopic geodesics on SO(3) by studying their covering paths on its universal cover $S^3$.
\begin{proposition}
Consider a periodic geodesic $R_t\in\SO(3)$ that corresponds to a covector curve in $C_1$ or $C_2$, and which is determined by its fraction $n/m\in\mathbb{Q}_+$, satisfying conditions \textsc{(\ref{condition_1})} or \textsc{(\ref{condition_2})}. Then the geodesic $R_t$ is null-homotopic if and only if $n$ is even. All trajectories corresponding to $C_4$ and $C_5$ are non-contractible.
\end{proposition}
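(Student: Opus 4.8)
The plan is to convert contractibility into a statement about the lift of the loop to the universal cover $S^3$, and then read the answer off the explicit quaternion parameterization (\ref{quaternion_param}). By Theorem~\ref{theorem_homotopy} the periodic geodesic $R_t$ is null-homotopic if and only if its covering path $q_t\in S^3$ (unique by Theorem~\ref{theorem_path}), started at $q_0=1$, is again closed, i.e. $q(mT)=q(0)$. Thus the whole question reduces to computing the single quaternion $q(mT)$ and deciding when it equals $q(0)$.

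First I would assemble the closing data at $t=mT$. Since the covector $p_t$ is $T$-periodic and $\phi_1,\phi_2$ are determined by $p_t$ through (\ref{euler_angles}), after $m$ periods one has $\phi_3(mT)=2\pi n$ (this is precisely the closing condition (\ref{inf_geodesic})/(\ref{inf_geodesic2}) of the preceding proposition), $\phi_2(mT)=\phi_2(0)$, and $\phi_1(mT)=\phi_1(0)+2\pi N$ for some integer winding number $N$ of the azimuthal angle. Substituting these into (\ref{quaternion_param}) and using that a scalar quaternion is central, the product should telescope: the inner factor $e^{\frac{\phi_3(mT)}{2}k}=e^{\pi n k}=(-1)^n$ and the terminal factor $e^{\frac{\phi_1(mT)}{2}k}=(-1)^{N}e^{\frac{\phi_1(0)}{2}k}$ are central scalars, while the surviving factors $e^{-\frac{\phi_2(0)}{2}i}e^{\frac{\phi_2(0)}{2}i}$ and $e^{-\frac{\phi_1(0)}{2}k}e^{\frac{\phi_1(0)}{2}k}$ cancel against the initial block. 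This yields the clean formula $q(mT)=(-1)^{\,n+N}q(0)$, so that $R_t$ is null-homotopic exactly when $n+N$ is even.

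The crux is therefore to control the winding number $N$ of $\phi_1$. In region $C_1$ the pendulum librates: by Table~\ref{table_1}, $p_1=s_1\dn(a\theta)$ keeps a constant sign, so $\cos\phi_1=p_1\sqrt{1-a^2}/\sqrt{M-p_3^2}$ never vanishes and $\phi_1$ merely oscillates inside a half-plane; hence $N=0$ and the condition reduces to ``$n$ even''. The delicate case is $C_2$, where the pendulum rotates and $(p_1,p_2)=(\cn,-s_2\sn)$ carries $\phi_1$ once around the circle per period, while $\phi_2$ returns without winding because $p_3$ keeps a constant sign. Here I would track this azimuthal winding carefully and argue that its total contribution to the quaternion sign is even, so that the parity of $n$ alone governs the outcome. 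I expect this winding bookkeeping in $C_2$ to be the main obstacle, since a naive count appears to introduce an extra dependence on $m$ that has to be shown to drop out before the stated criterion ``$n$ even'' can hold uniformly in $C_1$ and $C_2$.

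Finally, for $C_4$ and $C_5$ the argument is immediate. These geodesics are uniform, unit-speed one-parameter rotations $R_t=e^{t\Omega}$ about a fixed coordinate axis, so the primitive closed loop is a full rotation by $2\pi$; its lift is the half great circle $t\mapsto e^{\frac{t}{2}\omega}$ joining $1$ to $e^{\pi\omega}=-1$, which is not closed. By Theorem~\ref{theorem_homotopy} such loops are non-contractible, as claimed.
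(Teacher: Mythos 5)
Your overall route is the same as the paper's: lift the loop to $S^3$ via Theorems~\ref{theorem_path} and~\ref{theorem_homotopy}, evaluate the quaternion parameterization (\ref{quaternion_param}) at $t=mT$, and decide contractibility from the sign of $q(mT)$. Your treatment of $C_4$ and $C_5$ is complete and matches the paper, your master formula $q(mT)=(-1)^{\,n+N}q(0)$ with $N$ the winding number of the continuous lift of $\phi_1$ is correct, and your $C_1$ argument ($p_1=s_1\dn(a\theta)$ keeps constant sign, so $\cos\phi_1$ never vanishes and $N=0$) settles that region exactly as needed.

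The genuine gap is the unproved claim that in $C_2$ the winding contribution to the sign ``is even'' --- and in fact it is not, so this step cannot be completed as proposed. From the third Euler-angle equation one has $\dot\phi_1=-\dot\phi_3\cos\phi_2=-\dot\phi_3\,p_3/\sqrt{M}$; in $C_2$ both $\dot\phi_3>0$ and $\sign p_3=s_2$ are constant, so the lift of $\phi_1$ is strictly monotone, and since $(\cos\phi_1,\sin\phi_1)$ is a positive multiple of $\left(\sqrt{1-a^2}\,\cn(a\theta/k),\,-s_2\sn(a\theta/k)\right)$, it makes exactly one turn per pendulum period $T=4kK(k^2)/a$. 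Hence $N=-s_2 m$ and $q(mT)=(-1)^{\,n+m}q(0)$: the extra dependence on $m$ that you feared in your ``naive count'' is real and does not drop out. You can verify this independently in the bi-invariant limit: letting $a\to 0$ with $k=a/c$ fixed, the $C_2$ geodesics become $R_t=e^{t(\Omega_0+cA_3)}e^{-tcA_3}$, which close at $P=mT=2\pi m/c$ precisely when $\sqrt{1+c^2}/c=n/m$, and the lift then satisfies $q(P)=(-1)^n\cdot(-1)^m$; for instance $n/m=2/1$ gives $q(P)=-1$, a non-contractible loop with $n$ even. It is worth noting that the paper's own proof commits exactly the oversight you identified: it writes $\phi_1(mT)=\phi_1(0)$, an identity valid in $\SO(3)$ (i.e.\ mod $2\pi$) but not for the continuous lift required in (\ref{quaternion_param}) when $p_0\in C_2$. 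So your framework is sound and indeed more careful than the printed argument, but the $C_2$ half of the stated criterion cannot be rescued by parity bookkeeping: there the loop is null-homotopic if and only if $n+m$ is even (equivalently, since $n/m$ is irreducible, iff $n$ and $m$ are both odd), so closing your gap honestly leads to correcting the statement in $C_2$ rather than confirming it.
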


\begin{proof}

Lifted sub-Riemannian geodesics of $(SO(3),\Delta,g)$ are exactly the corresponding sub-Riemannian geodesics on $(S^3,\Delta',g')$. From Theorem \ref{theorem_path} it follows that only geodesics on $S^3$ can be covering paths of geodesics from SO(3), and from Theorem~\ref{theorem_homotopy} we know that a geodesic on SO(3) is null-homotopic if and only if all corresponding geodesics on $S^3$ are closed.

Every geodesic on SO(3) has a pair of covering paths on $S^3$ that satisfy $q(0)=\pm 1$ but since the sub-Riemannian structure on $S^3$ is left-invariant, these trajectories belong to the same homotopy class. Thus we can assume that $q(0)=1$. 

Consider first periodic geodesics on SO(3) that correspond to regions $C_1$ and $C_2$ and let us prove that the covering geodesics are periodic only for even $n$. In fact, for periodic geodesics on SO(3) we have $\phi_3(mT) = 2\pi n$. Therefore
$$
e^{\frac{\phi_3(mT)}{2}k} = \cos\frac{\phi_3(mT)}{2} + k \sin\frac{\phi_3(mT)}{2} = \cos \pi n + k \sin \pi n = (-1)^n.
$$
Since $\phi_1(mT)=\phi_1(0)$ and $\phi_2(mT)=\phi_2(0)$, we get from (\ref{quaternion_param})
$$
q(mT) = e^{-\frac{\phi_1(0)}{2}k}e^{-\frac{\phi_2(0)}{2}i}e^{\frac{\phi_3(mT)}{2}k}e^{\frac{\phi_2(mT)}{2}i}e^{\frac{\phi_1(mT)}{2}k} = (-1)^n.
$$
Consequently, if $n$ is even, then the corresponding trajectory on $S^3$ is closed and its projection to SO(3) is null-homotopic. If this is not the case, then the covering geodesic is not closed and its projection is not contractible. 

Since trajectories corresponding to $C_4$ and $C_5$ are just uniform rotations around vectors $e_1$ and $e_2$ they are not contractible.
\end{proof}

Thus we have described all periodic sub-Riemannian geodesics on SO(3) and classified them into two different homotopy classes.

\section{Symmetries of the Hamiltonian system}
\label{sec:symmetries}

A point $Q\in \SO(3)$ is called a Maxwell point for a sub-Riemannian problem on SO(3), if there exist two distinct geodesics of the same length joining $\Id$ with $Q$. 

It is well known that in an analytic sub-Riemannians problem after such a point both geodesics are no longer optimal~\cite{sachkov_se2}. The goal of this section is to obtain some characterization of the Maxwell sets for problem (\ref{R_system})-(\ref{sr_action}). This can be done via a symmetry approach that was successfully applied in~\cite{sachkov_se2}. We begin by looking for some symmetries of the exponential mapping. It is natural to expect that the fixed points of these symmetries are Maxwell points. 

We recall that the exponential mapping $\Exp:C\times\mathbb{R}_+\to\SO(3)$ sends a covector $p\in C = \{p\in\mathfrak{g}^*: H(p) = 1/2\}$ and a instant of time $t$ to the end point of the corresponding geodesic. A pair of mappings $\varepsilon: C\times\mathbb{R}_+\to C\times\mathbb{R}_+$ and $\varepsilon': \SO(3) \to \SO(3)$ is called a symmetry of the exponential map, if the following diagram is commutative:
$$
\begin{CD}
C\times \mathbb{R}_+ @>\Exp>> \SO(3)\\
@V\varepsilon VV @VV\varepsilon'V \\
C\times \mathbb{R}_+ @>\Exp>> \SO(3)
\end{CD}
$$ 

We can construct some symmetries of the exponential map from the symmetries of the Hamiltonian system (\ref{horizontal_subsystem})-(\ref{vertical_subsystem}). We start with the vertical subsystem (\ref{vertical_subsystem}) that has the following symmetries:
\begin{align*}
\varepsilon^1 &: (p_1(s),p_2(s),p_3(s))\mapsto (p_1(t-s),-p_2(t-s),p_3(t-s)), \\
\varepsilon^2 &: (p_1(s),p_2(s),p_3(s))\mapsto (p_1(t-s),p_2(t-s),-p_3(t-s)), \\
\varepsilon^3 &: (p_1(s),p_2(s),p_3(s))\mapsto (p_1(s),-p_2(s),-p_3(s)) ,\\
\varepsilon^4 &: (p_1(s),p_2(s),p_3(s))\mapsto (-p_1(s),-p_2(s),p_3(s)), \\
\varepsilon^5 &: (p_1(s),p_2(s),p_3(s))\mapsto (-p_1(t-s),p_2(t-s),p_3(t-s)), \\
\varepsilon^6 &: (p_1(s),p_2(s),p_3(s))\mapsto (-p_1(t-s),-p_2(t-s),-p_3(t-s)), \\
\varepsilon^7 &: (p_1(s),p_2(s),p_3(s))\mapsto (-p_1(s),p_2(s),-p_3(s)).
\end{align*}

In the phase space of the mathematical pendulum these symmetries are just reflections as it is shown in Figure 2. The variable $\psi$ corresponds to an angle on the $(p_1,p_2)$-plane as it can be seen from (\ref{zamena}).

\begin{figure}
\begin{center}
\includegraphics[scale=0.4]{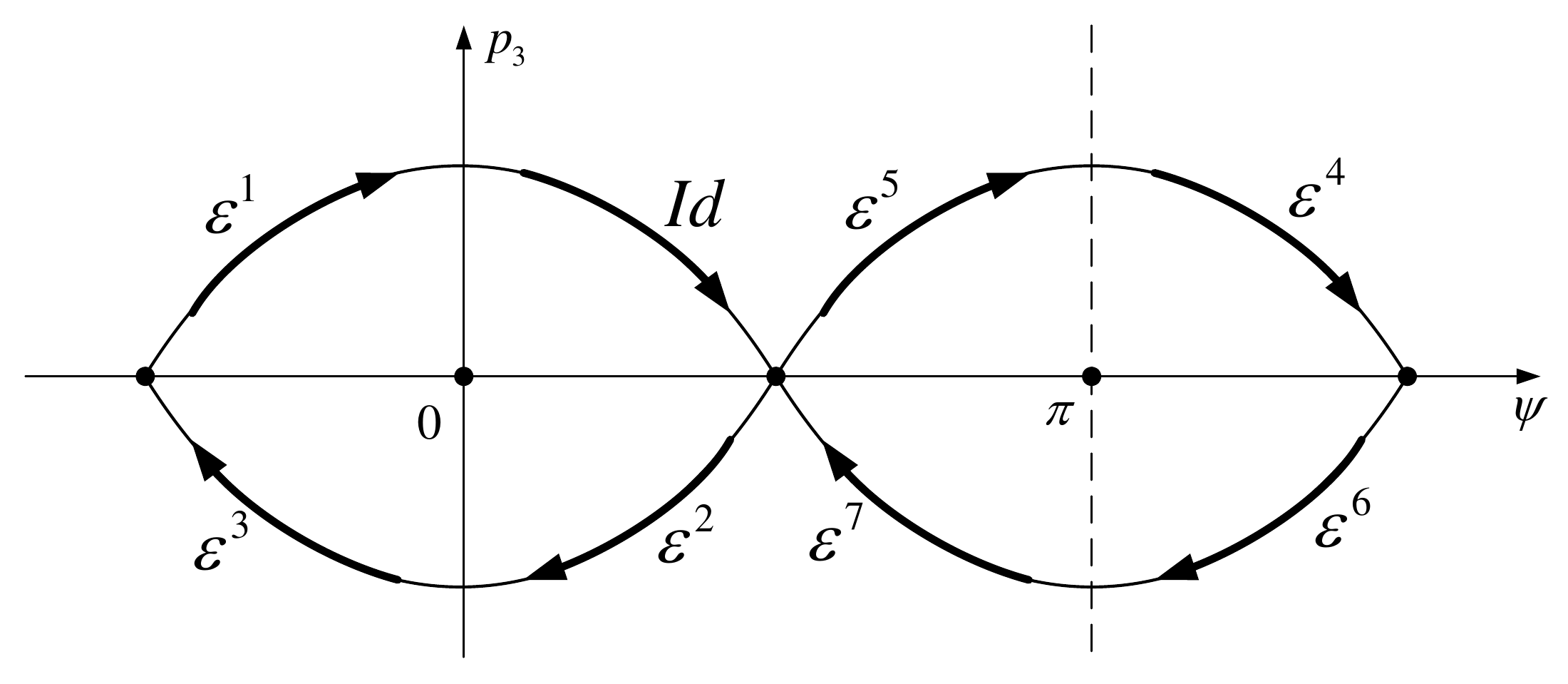}
\end{center}
\caption{Discrete symmetries in the preimage of the exponential map}
\end{figure}

The angular velocity matrix has the form
$$
\Omega_s = \begin{pmatrix}
0 & 0 & p_1(s) \\
0 & 0 & \sqrt{1-a^2}p_2(s) \\
-p_1(s) & -\sqrt{1-a^2}p_2(s) & 0
\end{pmatrix}.
$$
Under the action of $\varepsilon^i$ the components of $\Omega_s = \Omega^1_sA_1 + \Omega^2_sA_2 + \Omega^3_sA_3$ are transformed as follows:
\begin{align}
\varepsilon^1 &: (\Omega^1_s,\Omega^2_s,0)\mapsto (-\Omega^1_{t-s},\Omega^2_{t-s},0), \nonumber\\
\varepsilon^2 &: (\Omega^1_s,\Omega^2_s,0)\mapsto (\Omega^1_{t-s},\Omega^2_{t-s},0), \nonumber\\
\varepsilon^3 &: (\Omega^1_s,\Omega^2_s,0)\mapsto (-\Omega^1_s,\Omega^2_s,0),\nonumber \\
\varepsilon^4 &: (\Omega^1_s,\Omega^2_s,0)\mapsto (-\Omega^1_s,-\Omega^2_s,0), \label{sym_Omega}\\
\varepsilon^5 &: (\Omega^1_s,\Omega^2_s,0)\mapsto (\Omega^1_{t-s},-\Omega^2_{t-s},0), \nonumber\\
\varepsilon^6 &: (\Omega^1_s,\Omega^2_s,0)\mapsto (-\Omega^1_{t-s},-\Omega^2_{t-s},0),\nonumber \\
\varepsilon^7 &: (\Omega^1_s,\Omega^2_s,0)\mapsto (\Omega^1_s,-\Omega^2_s,0) .\nonumber
\end{align}

By using the matrices $I_i = e^{\pi A_i}$, it is easy to check that the action of the symmetries can be written in the following form:
\begin{align*}
\varepsilon^1 &: \Omega_s \mapsto -I_1\Omega_{t-s}I_1, \\
\varepsilon^2 &: \Omega_s \mapsto -I_3\Omega_{t-s}I_3, \\
\varepsilon^3 &: \Omega_s \mapsto I_2\Omega_sI_2, \\
\varepsilon^4 &: \Omega_s \mapsto I_3\Omega_sI_3, \\
\varepsilon^5 &: \Omega_s \mapsto -I_2\Omega_{t-s}I_2, \\
\varepsilon^6 &: \Omega_s \mapsto -\Omega_{t-s}, \\
\varepsilon^7 &: \Omega_s \mapsto I_1\Omega_sI_1.
\end{align*}

Taking into account that $I_i^2 = \Id$ one can show that the mappings defined below are symmetries of the horizontal part of the Hamiltonian system:
\begin{align*}
\varepsilon^1 &: R_s \mapsto I_1R_t^{-1}R_{t-s}I_1, \\
\varepsilon^2 &: R_s \mapsto I_3R_t^{-1}R_{t-s}I_3, \\
\varepsilon^3 &: R_s \mapsto I_2R_sI_2, \\
\varepsilon^4 &: R_s \mapsto I_3R_sI_3, \\
\varepsilon^5 &: R_s \mapsto I_2R_t^{-1}R_{t-s}I_2,\\
\varepsilon^6 &: R_s \mapsto R_t^{-1}R_{t-s}, \\
\varepsilon^7 &: R_s \mapsto I_1R_sI_1.
\end{align*}

The action of $\varepsilon^i$ in the preimage of the exponential map is defined as:
\begin{align*}
\varepsilon^1 &: (t,p_1(0),p_2(0),p_3(0))\mapsto (t,p_1(t),-p_2(t),p_3(t)),\nonumber \\
\varepsilon^2 &: (t,p_1(0),p_2(0),p_3(0))\mapsto (t,p_1(t),p_2(t),-p_3(t)),\nonumber \\
\varepsilon^3 &: (t,p_1(0),p_2(0),p_3(0))\mapsto (t,p_1(0),-p_2(0),-p_3(0)) ,\nonumber\\
\varepsilon^4 &: (t,p_1(0),p_2(0),p_3(0))\mapsto (t,-p_1(0),-p_2(0),p_3(0)),\nonumber \\
\varepsilon^5 &: (t,p_1(0),p_2(0),p_3(0))\mapsto (t,-p_1(t),p_2(t),p_3(t)),\nonumber \\
\varepsilon^6 &: (t,p_1(0),p_2(0),p_3(0))\mapsto (t,-p_1(t),-p_2(t),-p_3(t)),\nonumber \\
\varepsilon^7 &: (t,p_1(0),p_2(0),p_3(0))\mapsto (t,-p_1(0),p_2(0),-p_3(0)).\nonumber
\end{align*}

The action of $\varepsilon^i$ in the image of the exponential map is defined as:
\begin{align*}
\varepsilon^1 &: R_t \mapsto I_1R_t^{-1}I_1, \nonumber\\
\varepsilon^2 &: R_t \mapsto I_3R_t^{-1}I_3, \nonumber\\
\varepsilon^3 &: R_t \mapsto I_2R_tI_2, \nonumber\\
\varepsilon^4 &: R_t \mapsto I_3R_tI_3, \nonumber\\
\varepsilon^5 &: R_t \mapsto I_2R_t^{-1}I_2,\nonumber\\
\varepsilon^6 &: R_t \mapsto R_t^{-1}, \\
\varepsilon^7 &: R_t \mapsto I_1R_tI_1.\nonumber
\end{align*}
Using these definitions it is easy to check that $\varepsilon^i$ are symmetries of the exponential map.

We note that if $\varepsilon^i (p_0) = p_0$ then the corresponding geodesic is mapped to itself. The next proposition gives necessary and sufficient conditions for this to happen.
\begin{proposition}\label{fixed_in_preimage} Let $p_t$ be a solution of \textsc{(\ref{vertical_subsystem})}, $\theta_t$ be the "angle" parameter of the Hamiltonian system for the mathematical pendulum \textsc{(\ref{vertical})} and $a\in(0,1)$. And let $\tau(t,\theta_0)$ and $\xi(t,\theta_0)$ be functions defined as follows:
\begin{align*}
\tau + \xi &= a(t+\theta_0),\\
\tau - \xi &= a\theta_0.
\end{align*}
 Then the following statements are true:
\begin{enumerate}
\item 
$$\varepsilon^1 (p_0) = p_0\iff \begin{cases}
\sn\tau=0, & p_0\in C_1\cup C_2; \\ 
\tau = 0, & p_0\in C_3;
\end{cases}
$$
\item 
$$\varepsilon^2 (p_0) = p_0 \iff \begin{cases}
\cn\tau=0, & p_0\in C_1; \\ 
\text{is impossible for} & p_0\in C_2\cup C_3;
\end{cases}
$$
\item 
$$\varepsilon^5 (p_0) = p_0 \iff \begin{cases}
\cn\tau=0, & p_0\in C_2; \\ 
\text{is impossible for} & p_0\in C_1\cup C_3;
\end{cases}
$$
\item 
$$
\varepsilon^i (p_0) = p_0 \text{ is impossible for } i=3,4,6,7 \text{ and } p_0\in C_1\cup C_2\cup C_3.
$$
\end{enumerate}
\end{proposition}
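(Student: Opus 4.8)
The plan is to unwind each condition $\varepsilon^i(p_0)=p_0$ into three scalar equations relating the components $p_j$ of the covector at the two instants $s=0$ and $s=t$, and then to read those equations off the explicit formulas of Table~\ref{table_1}. Inspecting the action of $\varepsilon^i$ in the preimage, $\varepsilon^1(p_0)=p_0$ is equivalent to $p_1(t)=p_1(0)$, $p_2(t)=-p_2(0)$, $p_3(t)=p_3(0)$; $\varepsilon^2$ flips only the sign of $p_3$, $\varepsilon^5$ only that of $p_1$, and $\varepsilon^6$ sends $\vec p_t\mapsto-\vec p_0$, while $\varepsilon^3,\varepsilon^4,\varepsilon^7$ impose the vanishing of two fixed components of $p_0$ itself. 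Before computing I would record that along a single trajectory the signs $s_1$ and $s_2$ are constant: by \eqref{zamena} one has $s_1=\sign(\cos\psi)=\sign p_1$, which cannot change in $C_1$ or $C_3$ since there $p_1=s_1\dn$ resp. $p_1=s_1/\cosh$ with $\dn,\cosh>0$, and likewise $s_2=\sign p_3$ is constant in $C_2$ and $C_3$. This lets the $s_1,s_2$ prefactors cancel uniformly on both sides of every equation.

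Next I would treat $\varepsilon^1,\varepsilon^2,\varepsilon^5$ in the regions $C_1,C_2$. Writing $U,V$ for the Jacobi arguments at $s=0$ and $s=t$, so that $U=\tau-\xi$ and $V=\tau+\xi$ with $\tau$ the half-sum and $\xi$ the half-difference, the addition formulas of Appendix~B give
$$
\sn V+\sn U=\frac{2\sn\tau\,\cn\xi\,\dn\xi}{D},\qquad \sn V-\sn U=\frac{2\sn\xi\,\cn\tau\,\dn\tau}{D},
$$
$$
\cn V+\cn U=\frac{2\cn\tau\,\cn\xi}{D},\qquad \cn V-\cn U=\frac{-2\sn\tau\,\sn\xi\,\dn\tau\,\dn\xi}{D},
$$
with $D=1-k^2\sn^2\tau\,\sn^2\xi>0$, together with an analogous pair for $\dn$. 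Each component equation then becomes the vanishing of one such numerator. The essential point is that a single equation may admit spurious solutions in $\xi$ (such as $\cn\xi=0$), so I would impose all three simultaneously. For $\varepsilon^1$ the $p_2$-equation gives $\sn\tau\,\cn\xi\,\dn\xi=0$ and the $p_3$-equation gives $\sn\tau\,\sn\xi\,\dn\tau\,\dn\xi=0$; since $\dn>0$ these force $\sn\tau=0$. For $\varepsilon^2$ in $C_1$ the $p_3$-equation gives $\cn\tau\,\cn\xi=0$ and the $p_2$-equation gives $\sn\xi\,\cn\tau\,\dn\tau=0$, which together force $\cn\tau=0$; the argument for $\varepsilon^5$ in $C_2$ is identical with $p_1$ and $p_3$ interchanged. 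The impossibility of $\varepsilon^2$ in $C_2$ and of $\varepsilon^5$ in $C_1$ is immediate, since each would demand $\dn V=-\dn U$, excluded by $\dn>0$.

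For the separatrix region $C_3$ I would repeat the computation with $\tanh$ and $1/\cosh$ in place of the elliptic functions. Because $\cosh$ is even and positive and $\tanh$ is odd, the equations $1/\cosh V=1/\cosh U$ and $\tanh V=-\tanh U$ force $V=-U$, i.e. $\tau=0$, which is exactly the $k\to1$ degeneration of $\sn\tau=0$; the same sign obstruction again rules out $\varepsilon^2$ and $\varepsilon^5$. Finally, for part (4) I would argue directly on the level set $H=\tfrac12$, where $p_1^2+p_2^2=1$: the condition $\varepsilon^4(p_0)=p_0$ demands $p_1(0)=p_2(0)=0$, contradicting $p_1^2+p_2^2=1$; $\varepsilon^3$ demands $p_2(0)=p_3(0)=0$, i.e. $\psi\in\pi\mathbb{Z}$ with $p_3=0$, which is precisely $C_4$ and hence disjoint from $C_1\cup C_2\cup C_3$; $\varepsilon^7$ demands $p_1(0)=p_3(0)=0$, the equilibria of $C_5$, again disjoint; and $\varepsilon^6$ demands a sign reversal of $p_1$ (in $C_1,C_3$) or of $p_3$ (in $C_2$), impossible by the constancy of $s_1,s_2$ noted at the outset.

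The step I expect to be the main obstacle is the middle one: keeping the three component equations consistent and ensuring that their combination pins the condition down to $\tau$ alone rather than to $\xi$. This requires the careful cancellations described above and, in $C_2$, attention to the fact that the Jacobi argument there carries the extra factor $1/k$, so that the half-sum parameter entering the stated conditions must be scaled accordingly.
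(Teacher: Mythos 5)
Your proposal is correct and follows essentially the same route as the paper: reduce each condition $\varepsilon^i(p_0)=p_0$ to three component equations, apply the addition formulas (\ref{sum_sn})--(\ref{sum_dn}) at the half-sum/half-difference arguments $\tau,\xi$ to pin down $\sn\tau=0$ or $\cn\tau=0$, and rule out the remaining cases by the positivity of $\dn$ and $\cosh$ (equivalently, constancy of $\sign p_1$, $\sign p_3$) --- the paper writes out only the $\varepsilon^2$ case in $C_1$ and asserts the rest are analogous. You are in fact more complete than the printed proof: you treat all seven symmetries, correctly identify the fixed loci of $\varepsilon^3$ and $\varepsilon^7$ as $C_4$ and $C_5$ (the paper's ``impossible for arbitrary $p_0$'' is accurate only after restricting to $C_1\cup C_2\cup C_3$), and rightly flag the $1/k$ rescaling of the Jacobi argument in $C_2$, which the proposition's statement glosses over.
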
 

\begin{proof}
It is clear from the description of the symmetries that $\varepsilon^i (p_0) = p_0$ is impossible for $i=3,4,7$ and arbitrary $p_0$. The remaining statements are proved very similarly, so we prove just the second one. We have
$$
\varepsilon^2 (p_s) = p_s  \iff \left\{\begin{array}{l}
p_1(t) = p_1(0), \\
p_2(t) = p_2(0), \\
p_3(t) = -p_3(0).
\end{array}\right.
$$
In $C_1$ from the parameterization of the extremals we get an equivalent system:
$$
\left\{\begin{array}{l}
\dn(\tau + \xi) = \dn(\tau - \xi), \\
\sn(\tau + \xi) = \sn(\tau - \xi), \\
\cn(\tau + \xi) = -\cn(\tau - \xi).
\end{array}\right.
$$
Using equations (\ref{sum_sn})-(\ref{sum_dn}) it is easy to see that a solution of this system satisfies $\cn\tau = 0$.

If $p_t \in C_2$ or $p_t \in C_3$ then $\sign (p_3(t)) = \sign(p_3(0))$ for all $t\geq 0$. So it is clear that in this case the equation $p_3(t) = -p_3(0)$ has no solutions.
\end{proof}

Next we prove the main result of this section.
\begin{theorem}
Assume that $R_s\in \SO(3)$, $s\in[0,t]$ is a geodesic and $q_s\in S^3$ is its corresponding quaternion curve. Then $R_s$ is not optimal if for some instant of time $s_0\in(0,t)$ one of the following conditions is satisfied:
\begin{enumerate}
\item $q^0_{s_0} = 0$;
\item $q^1_{s_0} = 0$ and $\sn\tau\neq 0$ if $p_0\in C_1\cup C_2$   or   $\tau \neq 0$ if $p_0\in C_3$;
\item $q^2_{s_0} = 0$ and $\cn \tau\neq 0$  if  $p_0\in C_1$;
\item $q^3_{s_0} = 0$ and $\cn \tau\neq 0$  if  $p_0\in C_2$.

\end{enumerate}
\end{theorem}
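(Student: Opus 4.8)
The plan is to read each of the four conditions as a \emph{Maxwell point} condition and then invoke the standard fact recalled at the start of this section and proved in~\cite{sachkov_se2}: in an analytic sub-Riemannian problem a geodesic is no longer optimal after it passes through a point that is joined to $\Id$ by a second geodesic of the same length. Thus it suffices, for each condition and for the instant $s_0\in(0,t)$ in question, to exhibit a geodesic $\tilde R_s$, $s\in[0,s_0]$, that is distinct from $R_s|_{[0,s_0]}$, has the same length $s_0$, and satisfies $\tilde R_{s_0}=R_{s_0}$. Then $R_{s_0}$ is a Maxwell point and $R_s$ cannot be optimal on $[0,t]$.

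The second geodesic will be produced by the time-reversing symmetries $\varepsilon^i$, $i\in\{1,2,5,6\}$. First I would fix $s_0$ and apply such a symmetry to the restriction of the geodesic to $[0,s_0]$: by commutativity of the defining diagram, the geodesic $\tilde R$ whose initial covector is the $\varepsilon^i$-image of $p_0$ (computed with terminal time $s_0$) ends at the image action $\varepsilon^i(R_{s_0})$. Since every $\varepsilon^i$ preserves the level $H=1/2$, both $R$ and $\tilde R$ are arclength parameterized and hence of length $s_0$. Consequently $R_{s_0}$ is a Maxwell point as soon as
$$
\text{(a)}\quad \varepsilon^i(R_{s_0})=R_{s_0},\qquad \text{(b)}\quad \varepsilon^i(p_0)\neq p_0,
$$
where (a) secures the common endpoint and (b) secures that the two extremals are genuinely distinct, the geodesic depending injectively on its initial covector.

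To verify (a) I would pass to the universal cover, where the image actions lift to the quaternion maps $q\mapsto u_i\,\bar q\,u_i$ with $u_i\in\{1,i,j,k\}$ attached to $\varepsilon^6,\varepsilon^1,\varepsilon^5,\varepsilon^2$, and $\bar q$ the quaternion conjugate. A direct multiplication in the quaternion algebra shows that $u_i\,\bar q\,u_i$ differs from $q$ only in the sign of one distinguished coordinate; hence the identity $u_i\,\bar q_{s_0}\,u_i=-q_{s_0}$, which is exactly (a) read in $\SO(3)$ (where $q$ and $-q$ represent the same rotation), holds precisely when the matching component $q^j_{s_0}$ vanishes. This produces the pairing of condition~$1$ with $\varepsilon^6$ via $q^0_{s_0}=0$, condition~$2$ with $\varepsilon^1$ via $q^1_{s_0}=0$, and conditions~$3$ and~$4$ with $\varepsilon^2,\varepsilon^5$; for the theorem only the implication ``$q^j_{s_0}=0\Rightarrow$~(a)'' is needed, and that is immediate. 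Requirement (b) is then controlled by Proposition~\ref{fixed_in_preimage}: for $\varepsilon^6$ the preimage is never fixed, so condition~$1$ needs no extra hypothesis, while the accompanying inequalities in conditions~$2$--$4$ are exactly the negations of the preimage fixed-point conditions listed there ($\sn\tau\neq 0$, resp. $\tau\neq 0$, rules out $\varepsilon^1(p_0)=p_0$ on $C_1\cup C_2$, resp. $C_3$; $\cn\tau\neq 0$ rules out the relevant equality on $C_1$ or $C_2$).

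The delicate step is the quaternion translation of (a). The care required is in handling the two-to-one cover: the $\SO(3)$ equality only forces $u_i\,\bar q_{s_0}\,u_i=\pm q_{s_0}$, and one must check that the ``$+$'' branch produces only the isolated exceptional rotations $q=\pm u_i$, leaving the vanishing of a single component $q^j_{s_0}$ as the effective condition. The remaining bookkeeping---keeping straight which quaternion component and which region $C_1,C_2$ each symmetry governs---rests entirely on the symmetry tables already established and on Proposition~\ref{fixed_in_preimage}, so once the cover is handled correctly the four assertions follow uniformly.
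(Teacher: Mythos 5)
Your proposal is correct in substance and follows essentially the same route as the paper's own proof: reduce to Maxwell points, generate the second geodesic by the time-reversing symmetries $\varepsilon^1,\varepsilon^2,\varepsilon^5,\varepsilon^6$, invoke Proposition~\ref{fixed_in_preimage} to guarantee the two extremals are genuinely distinct, and identify the fixed points in the image by a quaternion computation. The paper reaches the conditions $q^l_{s_0}=0$ via $\varepsilon^i(R_{s_0})=R_{s_0}\Rightarrow(R_{s_0}I_l)^2=\Id\Rightarrow(q_{s_0}u_l)^2=\pm 1$, i.e. $\re(q_{s_0}u_l)=0$ or $q_{s_0}u_l=\pm 1$ --- which is the same computation as your $u_l\,\bar q\,u_l=\pm q$, with your ``$+$'' branch matching the paper's isolated exceptional solutions $q_{s_0}=\pm u_l$. (The paper additionally works out the image fixed sets of $\varepsilon^3,\varepsilon^4,\varepsilon^7$; these are codimension-two conditions that do not enter the four stated cases, so your omission of them is harmless.)

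The one item you explicitly left vague --- ``$\cn\tau\neq 0$ rules out the relevant equality on $C_1$ or $C_2$'' --- is precisely the delicate bookkeeping, so pin it down. Your own lift assignment $\varepsilon^5\leftrightarrow j$, $\varepsilon^2\leftrightarrow k$ forces $q^2_{s_0}=0$ to be governed by $\varepsilon^5$, whose preimage fixed points exist only in $C_2$ (there the condition is $\cn\tau=0$, by item~3 of Proposition~\ref{fixed_in_preimage}), and $q^3_{s_0}=0$ to be governed by $\varepsilon^2$, whose preimage fixed points exist only in $C_1$ (item~2). Hence the symmetry argument --- yours and the paper's alike --- delivers ``$q^2_{s_0}=0$ and $\cn\tau\neq 0$ if $p_0\in C_2$'' and ``$q^3_{s_0}=0$ and $\cn\tau\neq 0$ if $p_0\in C_1$'', i.e. with the regions interchanged relative to the printed conditions 3 and 4. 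Read literally, printed condition 4 would assert non-optimality for $p_0\in C_1$ with $q^3_{s_0}=0$ and $\cn\tau=0$; but there $\varepsilon^2$ fixes the entire extremal and produces no second geodesic, so no Maxwell-point conclusion is available. Your argument is therefore sound, but a complete write-up must make the pairing explicit, and what it then proves is the version of conditions 3--4 consistent with Proposition~\ref{fixed_in_preimage} (evidently the intended statement; the printed one appears to carry a $C_1$/$C_2$ transposition). Also state, rather than merely assert, the injectivity used in your step (b): here it is immediate, since along an arclength geodesic $u_1=p_1$, $u_2=p_2$ are recovered from $\dot R$, and $p_3$ from $\dot p_1=p_3p_2$, $\dot p_2=-p_3p_1$ with $p_1^2+p_2^2=1$, so distinct initial covectors give distinct geodesics.
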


\begin{proof}
In view of Proposition~\ref{fixed_in_preimage} and the definition of Maxwell points we only need to show that fixed points of $\varepsilon^i$ in the image of the exponential map satisfy $q^i = 0$.

For the end-point of the geodesic $R_s$ we have 
\begin{align*}
\varepsilon^1 &: R_t \mapsto I_1R_t^{-1}I_1, \\
\varepsilon^2 &: R_t \mapsto I_3R_t^{-1}I_3, \\
\varepsilon^3 &: R_t \mapsto I_2R_tI_2, \\
\varepsilon^4 &: R_t \mapsto I_3R_tI_3, \\
\varepsilon^5 &: R_t \mapsto I_2R_t^{-1}I_2, \\
\varepsilon^6 &: R_t \mapsto R_t^{-1}, \\
\varepsilon^7 &: R_t \mapsto I_1R_tI_1 .
\end{align*}

Consider first the symmetries $\varepsilon^i$ with $i=1,2,5,6$:
$$
\begin{array}{l}
\varepsilon^1 : I_1R_t^{-1}I_1 = R_t, \\
\varepsilon^2 : I_3R_t^{-1}I_3 = R_t,\\
\varepsilon^5 : I_2R_t^{-1}I_2 = R_t,\\
\varepsilon^6 : R_t^{-1} = R_t;
\end{array}
\quad\Rightarrow\quad
\begin{array}{l}
\varepsilon^1 : (R_tI_1)^2 = \Id, \\
\varepsilon^2 : (R_tI_3)^2 = \Id,\\
\varepsilon^5 : (R_tI_2)^2 = \Id,\\
\varepsilon^6 : (R_t)^2 = \Id.
\end{array}
$$
The corresponding quaternion relations are
$$
\begin{array}{l}
\varepsilon^1 : (q_ti)^2 = \pm 1, \\
\varepsilon^2 : (q_tk)^2 = \pm 1,\\
\varepsilon^5 : (q_tj)^2 = \pm 1,\\
\varepsilon^6 : (q_t)^2 = \pm 1,
\end{array}
\quad\Rightarrow\quad
\begin{array}{l}
\varepsilon^1 : 
\left[\begin{array}{l}
Re(q_ti) = 0,\\
q_ti = \pm 1;
\end{array}\right. \\
\varepsilon^2 : 
\left[\begin{array}{l}
Re(q_tk) = 0,\\
q_tk = \pm 1;
\end{array}\right. \\
\varepsilon^5 : 
\left[\begin{array}{l}
Re(q_tj) = 0,\\
q_tj = \pm 1;
\end{array}\right. \\
\varepsilon^7 : 
\left[\begin{array}{l}
Re(q_t) = 0,\\
q_t = \pm 1;
\end{array}\right. \\
\end{array}
\quad\Rightarrow\quad
\begin{array}{l}
\varepsilon^1 : 
\left[\begin{array}{l}
q^1_t = 0,\\
q_t = \pm i;
\end{array}\right. \\
\varepsilon^2 : 
\left[\begin{array}{l}
q^3_t = 0,\\
q_t = \pm k;
\end{array}\right. \\
\varepsilon^5 : 
\left[\begin{array}{l}
q^2_t = 0,\\
q_t = \pm j;
\end{array}\right. \\
\varepsilon^7 : 
\left[\begin{array}{l}
q^t_0 = 0,\\
q_t = \pm 1.
\end{array}\right. \\
\end{array}
$$

For the remaining symmetries we have:
$$
\begin{array}{l}
\varepsilon^3 : I_2R_tI_2 = R_t,\\
\varepsilon^4 : I_3R_tI_3 = R_t,\\
\varepsilon^7 : I_1R_tI_1 = R_t;
\end{array}
\quad\Rightarrow\quad
\begin{array}{l}
\varepsilon^3 : R_tI_2 = I_2R_t,\\
\varepsilon^4 : R_tI_3 = I_3R_t,\\
\varepsilon^7 : R_tI_1 = I_1R_t.
\end{array}
$$
The corresponding quaternion relations are
\begin{align*}
\begin{array}{l}
\varepsilon^3 : q_tj = \pm jq_t,\\
\varepsilon^4 : q_tk = \pm kq_t,\\
\varepsilon^7 : q_ti = \pm iq_t,
\end{array}
&\quad\Rightarrow\quad
\begin{array}{l}
\varepsilon^3 : q^0_tj +q^1_tk-q^2_t-q^3_ti = \pm (q^0_tj -q^1_tk-q^2_t+q^3_ti),\\
\varepsilon^4 : q^0_tk - q^1_tj +q^2_ti -q^3_t = \pm (q^0_tk + q^1_tj - q^2_ti -q^3_t),\\
\varepsilon^7 : q^0_ti - q^1_t - q^2_tk +q^3_tj = \pm (q^0_ti - q^1_t + q^2_tk -q^3_tj);
\end{array}
\quad\Rightarrow\quad
\\
&\quad\Rightarrow\quad
\begin{array}{l}
\varepsilon^3 : 
\left[ \begin{array}{l}
q^1_tk - q^3_ti = 0,\\
q^0_tj - q^2_t = 0;
\end{array}\right.\\
\varepsilon^4 : 
\left[ \begin{array}{l}
q^2_ti - q^1_tj = 0,\\
q^0_tk - q^3_t = 0;
\end{array}\right.\\
\varepsilon^7 :
\left[ \begin{array}{l}
q^3_tj - q^2_tk = 0,\\
q^0_ti - q^1_t = 0.
\end{array}\right.
\end{array}
\end{align*}

All the equations that are different from $q^i_t = 0$ include them as a subsystem. Thus all fixed points of $\varepsilon^i$ in the image of the exponential map satisfy $q^i = 0$.

\end{proof}

We complete this section by discussing the geometric meaning of the symmetries $\varepsilon^i$ in the image of the exponential map. It easy to see that discrete symmetries $\varepsilon^i$ form a finite group $\mathbb{Z}^2\times\mathbb{Z}^2\times\mathbb{Z}^2$. So it is enough to discuss the meaning of some generators of this group, for example, $\varepsilon^3$, $\varepsilon^4$ and $\varepsilon^6$.

Now we look at SO(3) as a unit frame bundle of $S^2$. We can identify an element of SO(3) with a point on the sphere and a tangent vector at this point. If $R\in \SO(3)$, its projection on the sphere is simply given by $R\mapsto Re_1$. 

Let $Re_1 = xe_1 +ye_2 + ze_3$. It is easy to verify by hand that the symmetries $\varepsilon^3$ and $\varepsilon^4$ are just reflections with respect to the plane $y=0$ and the plane $z=0$. These symmetries are shown in Figure~\ref{pic_sym34}. Dashed curves are the reflected curves. 

\begin{figure}[h]
\centering
\subfigure[]{
\includegraphics[width=0.4\linewidth]{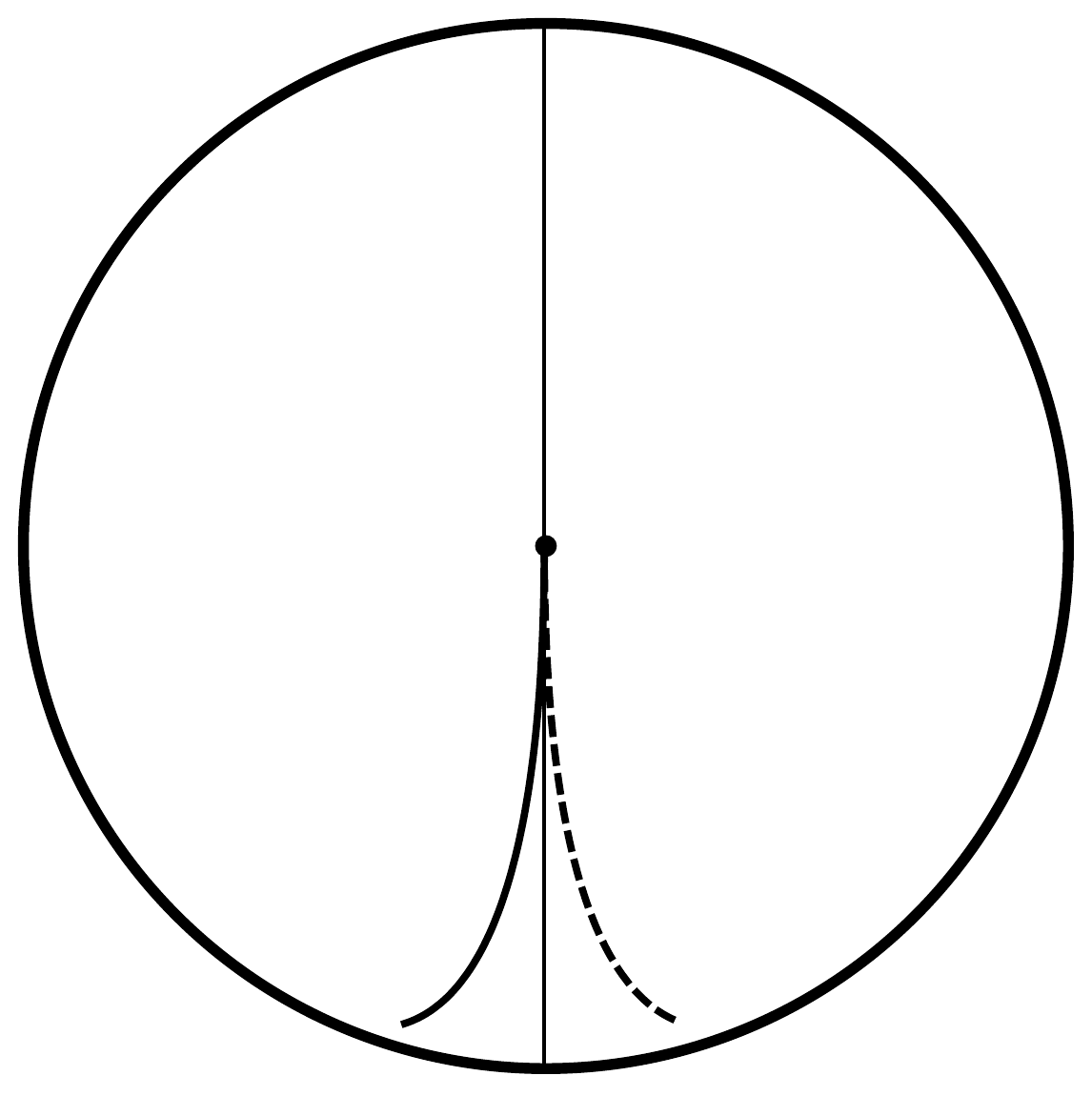}
\label{pic_sym34_a}
}
\hspace{1ex}
\subfigure[]{
\includegraphics[width=0.4\linewidth]{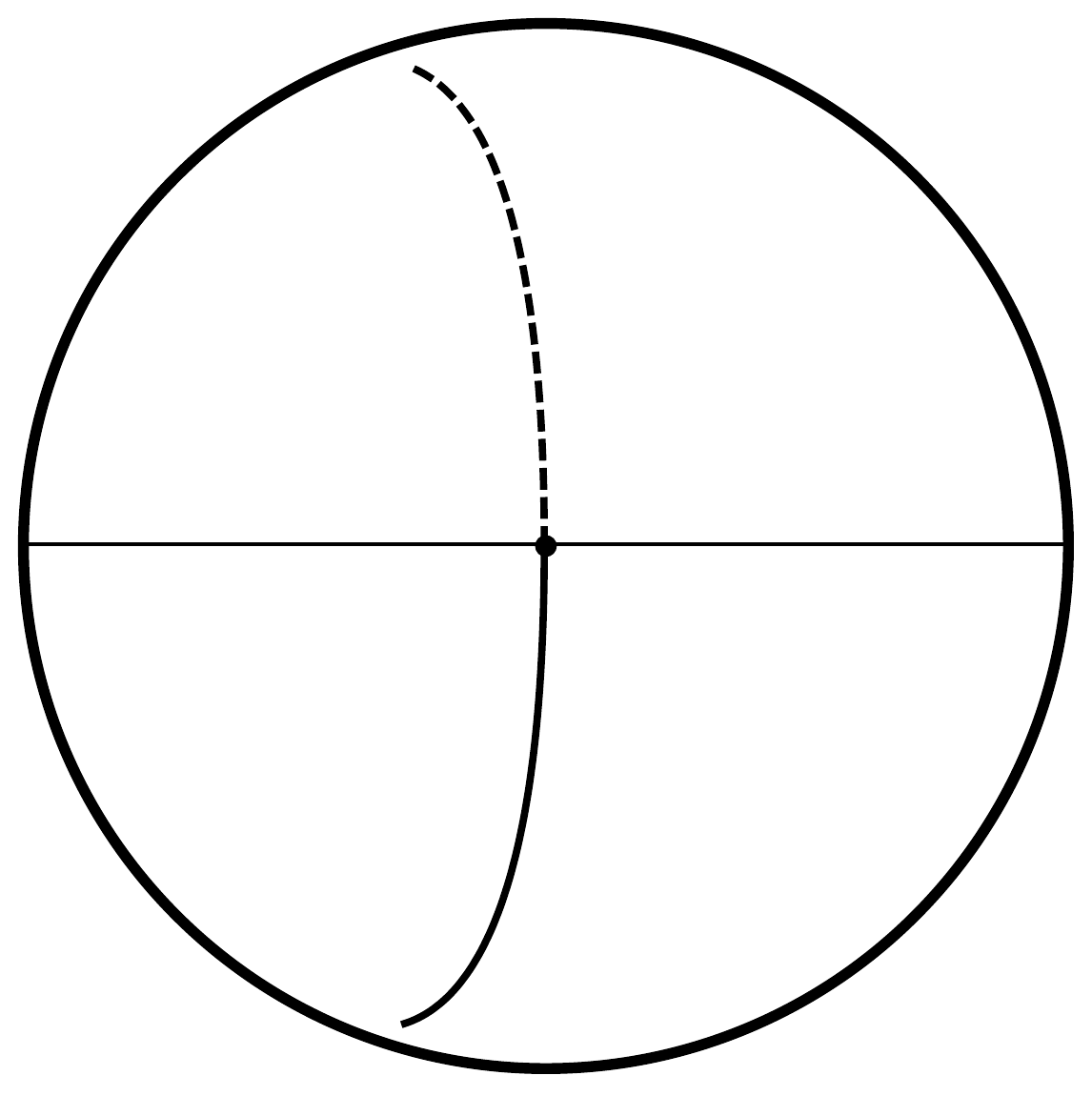}
\label{pic_sym34_b}
}
\caption{Action of discrete symmetries on $S^2$:
\subref{pic_sym34_a} action of $\varepsilon^3$; 
\subref{pic_sym34_b} action of $\varepsilon^4$}
\label{pic_sym34}
\end{figure}

Next we assume that $R_te_1 \neq \pm e_1$. The curve $\varepsilon^6(R_s)e_1$ is up to some rotation a reflection of the curve $R_se_1$ with respect to the center of the chord joining $e_1$ with $R_te_1$. By a chord we mean a short arc of the unique great circle that passes through these two points. 

Write down an analytical expression for this reflection of a curve  $R_se_1$ in terms of quaternions. The centeral point of the chord has coordinates
$$
\vec{c}=\frac{e_1 + R_te_1}{\Vert e_1 + R_te_1\Vert}.
$$
We can rewrite this in quaternion notations:
$$
c=\frac{i + q_tiq_t^{-1}}{\Vert i + q_tiq_t^{-1}\Vert}.
$$
Next we reverse the direction of time on the geodesic $q_s\mapsto q_{t-s}$ and rotate $q_{t-s}iq_{t-s}^{-1}$ around $\vec{c}$ by angle $\pi$. In this way we get an expression for the reflection with respect to the middle point of the considered chord:
$$
a_s = -\frac{(i + q_tiq_t^{-1})q_{t-s}iq_{t-s}^{-1}(i + q_tiq_t^{-1})}{\Vert e_1 + q_te_1q_t^{-1}\Vert^2}.
$$

Now consider an Euler angle parameterization of $R_s$:
\begin{equation*}
R_s = e^{\alpha_3(s)A_1}e^{\alpha_2(s)A_3}e^{\alpha_1(s)A_1}.
\end{equation*}
Note that $\alpha_i(s)$ are different from $\phi_i(s)$ introduced earlier. The claim is that
\begin{equation}
\label{verify}
a_s = e^{(\alpha_3(t)+\alpha_1(t)-\pi)i/2}q_{t}^{-1}q_{t-s}iq_{t-s}^{-1}q_{t}e^{-(\alpha_3(t)+\alpha_1(t)-\pi)i/2}.
\end{equation}
Here $e^{(\alpha_3(t)+\alpha_1(t)-\pi)i/2}$ is a quaternion, that corresponds to a rotation around $e_1$ on angle $\alpha_3(t)+\alpha_1(t)-\pi$. Equation (\ref{verify}) can be verified directly by lengthy computations involving trigonometric functions.

So we have found discrete symmetries of the exponential map and obtained some necessary optimality conditions. 

\begin{figure}[h]
\begin{center}
\includegraphics[scale=0.7]{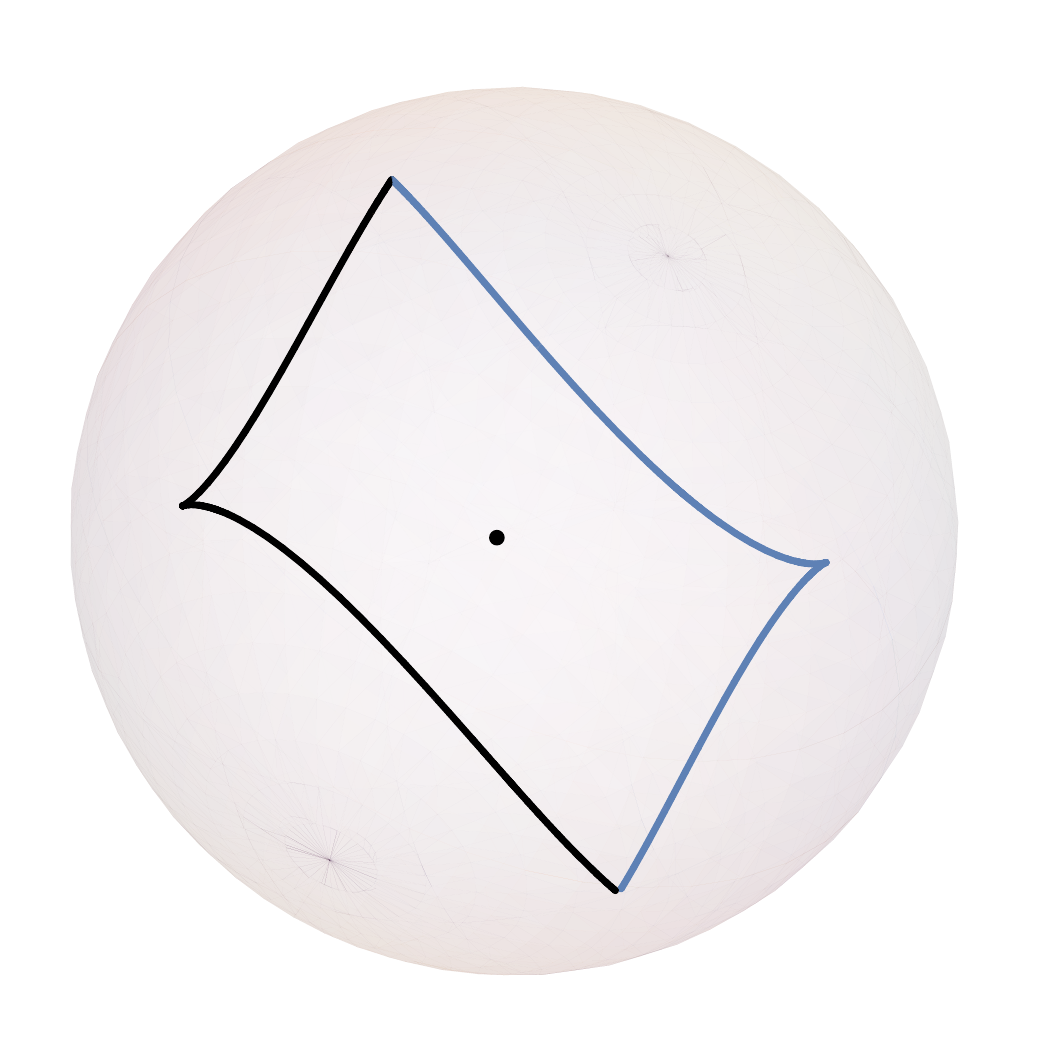}
\end{center}
\caption{Action of $\varepsilon^6$ in the image of the exponential map}
\label{pic_sym62}
\end{figure}

\section{Almost-Riemannian geodesics on $S^2$}
\label{sec:AR-problem}

In the second part of this article we apply the same symmetry approach to study optimality of almost-Riemannian geodesics. We give some new necessary optimality conditions and bounds on the cut time. 

Consider two vector fields on $S^2$ embedded into $\mathbb{R}^3$:
$$
X_1(\vec{\gamma}) = \vec{\gamma}\times e_2, \qquad X_2(\vec{\gamma}) = \sqrt{1-a^2}\vec{\gamma}\times e_1, \qquad \vec{\gamma}\in S^2=\{x\in\mathbb{R}^3\,:\, |x| = 1\},
$$
where $\vec{a}\times\vec{b}$ is the usual cross product between vectors $\vec{a}$ and $\vec{b}$. These vector fields span a rank varying distribution $\vec{\Delta}$ on $S^2$. Assume also that $X_1(\vec{\gamma})$ and $X_2(\vec{\gamma})$ are orthonormal. In this case $S^2$ is endowed with a structure of an almost-Riemannian manifold. Let $\vec{\gamma} = xe_1+ye_2+ze_3$. The set of points where $\rank\vec{\Delta}_{\vec{\gamma}} = 1$ is called the singular set $\mathcal{S}$ and in coordinates $\mathcal{S}=\{\vec{\gamma}\in\mathbb{R}^3: |\vec{\gamma}| = 1, z=0\}$. 

The problem of finding minimal trajectories for the almost-Riemannian structure on $S^2$ can be formulated as an optimal control problem on a sphere:
\begin{equation}
\label{sphere_control}
\dot{\vec{\gamma}} = \vec{\gamma}\times\vec{\omega},
\end{equation}
\begin{equation}
\qquad \vec{\gamma},\vec{\omega}\in \mathbb{R}^3,  \qquad |\vec{\gamma}|=1,\qquad  \vec{\omega} =u_2\sqrt{1-a^2}e_1 + u_1e_2,
\end{equation}
\begin{equation}
\vec{\gamma}(0) = \vec{\gamma}_0, \qquad \vec{\gamma}(T)=\vec{\gamma}_T,
\end{equation}
\begin{equation}
\label{sphere_functional}
\int_0^T \sqrt{u_1^2+u_2^2} dt\rightarrow\min.
\end{equation}
A solution of (\ref{sphere_control}) is
\begin{equation}
\label{sphere_sol}
\vec{\gamma}_t=R^{-1}_t\vec{\gamma}_0,
\end{equation}
where $R_t\in \SO(3)$ satisfies the following differential equation:
$$
\dot{R}=R\Omega, \qquad R(0) = \Id,
$$
and $\Omega \in\so(3)$ is isomorphic to $\vec{\omega}\in\mathbb{R}^3$. The 
matrix $R_t$ is an operator that maps coordinates of a vector in a moving frame to coordinates in the stationary frame.

Optimal control problem (\ref{sphere_control}) can be lifted in a natural way to SO(3):
\begin{equation}
\label{sphere_up_control}
\dot{R} = R\Omega =R(u_2\sqrt{1-a^2}A_1 + u_1A_2),
\end{equation}
\begin{equation}
R\in \SO(3),\qquad\Omega\in \so(3),
\end{equation}
\begin{equation}
R(0) = e^{\beta X_0}, \qquad R(T) = e^{\beta X_0}R_T,
\end{equation}
\begin{equation}
\label{sphere_up_functional}
\int_0^T \sqrt{u_1^2+u_2^2}dt\rightarrow\min.
\end{equation}
where $\beta\in [0,2\pi)$, $X_0\in\so(3)$ is isomorphic to $\vec{\gamma}_0\in\mathbb{R}^3$, $R_T\in\SO(3)$ is an arbitrary special orthogonal matrix, s.t.  $\vec{\gamma}_T=R^{-1}_T\vec{\gamma}_0$, $e^{\beta X_0}$ is the matrix that corresponds to the rotation by angle $\beta$ around the initial vector $\vec{\gamma}_0$. As a result we get an optimal transfer problem between the manifolds $e^{\beta X_0}$ and $e^{\beta X_0}R_T$.

Let $L_{R}: \SO(3) \to \SO(3)$ be the left shift
$$
L_{R}: g\mapsto Rg, \qquad g\in\SO(3),
$$
let $p_t\in\mathfrak{g}^*$ be a solution of (\ref{vertical_subsystem}) and let $\lambda_t\in T^*_{R(t)}\SO(3)$ defined by the relation
$$
\lambda_t = (dL_{R(t)}^*)^{-1}p_t.
$$
Here $dL_{R}:\so(3)\mapsto T_{R}\SO(3) $ is just the differential of the left shift $L_{R}$. Extremal trajectories in problem (\ref{sphere_up_control})-(\ref{sphere_up_functional}) are sub-Riemannian geodesics on $(\SO(3),\Delta,g)$ that satisfy the transversality conditions
$$
\langle \lambda_0,T\left( e^{\beta X_0} \right)\rangle=0, \qquad \langle \lambda_T,T\left(e^{\beta X_0} R_T  \right)\rangle=0.
$$

From left-invariance of the problem it follows that it is sufficient to impose transversality conditions only at the identity element (see~\cite{boscain_quant2} or~\cite{agrachev_sachkov}):
\begin{equation}
\label{transversality_cond}
\langle p_0, X_0 \rangle = 0.
\end{equation}
Using the isomorphism between so(3) and $\mathbb{R}^3$ we can write this in the form
\begin{equation*}
\langle \vec{p}_0, \vec{\gamma}_0 \rangle = -\sin \psi_0x_0+ \cos \psi_0 y_0\sqrt{1-a^2}+p_3(0)z_0  =0.
\end{equation*}
Thus we can use the parameterization of sub-Riemannian geodesics on SO(3) given in Section~\ref{sec:SR-geodesics} to obtain a full paramaterization of almost-Riemannian geodesics on $S^2$. Given an initial point $\vec{\gamma}_0$, any almost-Riemannian geodesic starting from $\vec{\gamma}_0$ is parameterized as $\vec{\gamma}_t = R_t^{-1}\vec{\gamma}_0$, where $R_t$ is a sub-Riemannian geodesic that satisfies the transversality conditions (\ref{transversality_cond}).

\section{Symmetries of the almost-Riemannian problem on $S^2$}
\label{sec:AR-symmetries}

Now we consider symmetries in the otpimal control problem (\ref{sphere_control})-(\ref{sphere_functional}) on the sphere. In the previous section we have seen that almost-Riemannian geodesics on $S^2$ are projections of sub-Riemannian geodesics on SO(3) that satisfy transversality conditions. From this we get a system of equations for almost-Riemannian geodesics
\begin{align}
&\dot{\vec\gamma} = \vec\gamma\times\vec\omega,\label{sphere_hamiltonian}\\
&\dot{\vec{p}} = \vec{p}\times\vec{\omega}\label{sphere_hamiltonian2},\\
&\langle\vec{p}_0,\vec{\gamma}_0\rangle = 0\nonumber
\end{align}
where $\vec\omega = p_1e_2+p_2\sqrt{1-a^2}e_1$. The second equation is just the Lax equation from Section~\ref{sec:SR-geodesics} rewritten in $\mathbb{R}^3$ using the isomorphism between the three-dimensional Euclidean space and so(3) (see Appendix B).

Next we prove the following theorem.
\begin{theorem}
If the initial point of an almost-Riemanian geodesic $\vec{\gamma}_s=x_se_1+y_se_2+z_se_3$ satisfies $x_0 = 0$, $y_0 = 0$ or $z_0 = 0$ and for some instant of time $\tau>0$ we have $x_\tau = 0$, $y_\tau = 0$ or $z_\tau = 0$ correspondingly, then for all $t>\tau$ the geodesic $\vec\gamma_s, s\in[0,t]$ is not optimal.
\end{theorem}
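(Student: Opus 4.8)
The plan is to transfer the Maxwell-point technique of Section~\ref{sec:symmetries} to the sphere, using that the three reflections in the coordinate planes,
$$
\sigma_x:(x,y,z)\mapsto(-x,y,z),\qquad \sigma_y:(x,y,z)\mapsto(x,-y,z),\qquad \sigma_z:(x,y,z)\mapsto(x,y,-z),
$$
are isometries of the almost-Riemannian structure $(S^2,\vec{\Delta},g)$; these are the almost-Riemannian analogues of the discrete symmetries $\varepsilon^7,\varepsilon^3,\varepsilon^4$ on $\SO(3)$. First I would verify the isometry property by a direct computation on the frame. Writing $X_1(\vec{\gamma})=\vec{\gamma}\times e_2$ and $X_2(\vec{\gamma})=\sqrt{1-a^2}\,\vec{\gamma}\times e_1$, one checks that $\sigma_x$ sends $(X_1,X_2)$ to $(-X_1,X_2)$, that $\sigma_y$ sends it to $(X_1,-X_2)$, and that $\sigma_z$ sends it to $(-X_1,-X_2)$. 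Thus each reflection carries the orthonormal frame to an orthonormal frame, so it preserves the distribution $\vec{\Delta}$ and the metric $g$, maps arclength geodesics to geodesics of the same length, and fixes pointwise its invariant great circle $\{x=0\}$, $\{y=0\}$ or $\{z=0\}$.

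Next I would run the reflection argument. Suppose, say, $x_0=0$ and $x_\tau=0$, so that both $\vec{\gamma}_0$ and $\vec{\gamma}_\tau$ lie in the great circle fixed by $\sigma_x$ (the cases of $\sigma_y$ and $\sigma_z$ are identical). Then $\sigma_x$ fixes both endpoints, so $\tilde{\vec{\gamma}}_s:=\sigma_x(\vec{\gamma}_s)$ is again an almost-Riemannian geodesic starting from $\vec{\gamma}_0$, ending at $\tilde{\vec{\gamma}}_\tau=\vec{\gamma}_\tau$, and of the same length as $\vec{\gamma}_s$ on $[0,\tau]$. Provided $\tilde{\vec{\gamma}}\neq\vec{\gamma}$, the point $\vec{\gamma}_\tau$ is then joined to $\vec{\gamma}_0$ by two distinct geodesics of equal length, i.e.\ it is a Maxwell point; since the problem is analytic, no geodesic is optimal beyond a Maxwell point (cf.\ the argument recalled in~\cite{sachkov_se2}), and hence $\vec{\gamma}_s$, $s\in[0,t]$, is not optimal for any $t>\tau$, which is the assertion.

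The main obstacle is the distinctness $\tilde{\vec{\gamma}}\neq\vec{\gamma}$, because $\sigma$ fixes exactly its coordinate great circle, so $\tilde{\vec{\gamma}}=\vec{\gamma}$ happens precisely when the whole geodesic lies inside that circle. For the singular-set case $\{z=0\}=\mathcal{S}$ this is automatic: the tangent direction of $\mathcal{S}$ is orthogonal to $\vec{\Delta}$, which degenerates to the $e_3$-direction along $\mathcal{S}$, so $\mathcal{S}$ contains no horizontal curve and a fortiori no geodesic. For $\{x=0\}$ and $\{y=0\}$ the only geodesics contained in the circle are the uniform rotations of regions $C_5$ and $C_4$ (the integral curves of $X_2$ and $X_1$), and away from these exceptional closed geodesics the reflected curve is genuinely distinct.

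I expect the careful bookkeeping of this distinctness to be the delicate point, exactly as the side conditions $\sn\tau\neq0$ and $\cn\tau\neq0$ were in Proposition~\ref{fixed_in_preimage} for $\SO(3)$. Concretely, $\tilde{\vec{\gamma}}=\vec{\gamma}$ is equivalent to the reflection fixing the initial covector $\vec{p}_0$, so I would translate the condition through the transversality relation $\langle\vec{p}_0,\vec{\gamma}_0\rangle=0$ and the explicit parameterization of Section~\ref{sec:SR-geodesics}, thereby identifying the excluded degenerate geodesics and confirming that in all remaining cases the reflected geodesic is distinct and the Maxwell conclusion holds.
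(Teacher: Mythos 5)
Your proposal is correct and rests on the same underlying mechanism as the paper's proof --- the reflection symmetries corresponding to $\varepsilon^3,\varepsilon^4,\varepsilon^7$ together with the Maxwell-point principle --- but the implementation is genuinely different. The paper stays inside the Hamiltonian picture: it lists the symmetries of system (\ref{sphere_hamiltonian})--(\ref{sphere_hamiltonian2}), verifies their compatibility with the transversality condition $\langle\vec{p}_0,\vec{\gamma}_0\rangle=0$, determines when they preserve the initial point, and obtains distinctness of the reflected extremal at the level of covectors, via Proposition~\ref{fixed_in_preimage} ($\varepsilon^i(p_0)\neq p_0$ for $i=3,4,7$ and $p_0\in C_1\cup C_2\cup C_3$). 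You instead prove directly that $\sigma_x,\sigma_y,\sigma_z$ are isometries of the almost-Riemannian structure --- your frame computation is correct, since $A\vec{u}\times A\vec{v}=\det(A)\,A(\vec{u}\times\vec{v})$ for $A\in O(3)$ gives exactly $(X_1,X_2)\mapsto(-X_1,X_2)$, $(X_1,-X_2)$, $(-X_1,-X_2)$ --- which makes the transversality bookkeeping unnecessary, and you settle distinctness at the level of trajectories: $\sigma\circ\vec{\gamma}=\vec{\gamma}$ iff the geodesic lies in the fixed great circle, which is impossible for $\{z=0\}=\mathcal{S}$ (there $\vec{\Delta}$ degenerates to the $e_3$-direction, transverse to $\mathcal{S}$) and happens on $\{x=0\}$, $\{y=0\}$ only for the $C_5$ and $C_4$ rotations. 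This buys a more elementary, self-contained argument, and it makes explicit a point the paper's proof leaves silent: Proposition~\ref{fixed_in_preimage} says nothing about $p_0\in C_4\cup C_5$, and for a rotation contained in $\{x=0\}$ or $\{y=0\}$ (where $x_s\equiv 0$, so $\tau$ may be taken arbitrarily small while short arcs are still optimal) the conclusion of the theorem genuinely fails, so these trajectories must be excluded in both treatments --- your geometric criterion identifies them, whereas the paper's proof implicitly restricts to $C_1\cup C_2\cup C_3$. Note also that the ``careful bookkeeping'' you defer to the end is already supplied by your own trajectory-level characterization of the degenerate case, so nothing essential is missing.
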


\begin{proof}
Since the vertical subsystem (\ref{sphere_hamiltonian2}) is the same as in the sub-Riemannian case, we consider symmetries  $\varepsilon^i$. From the action of  $\varepsilon^i$ on $\Omega_s$ (see (\ref{sym_Omega})) it follows that the angular velocity vector $\vec{\omega}_s$ is transformed in one of two following ways:
\begin{align*}
\vec{\omega}_s & \mapsto -I_j\vec{\omega}_{t-s},\\
\vec{\omega}_s & \mapsto I_j\vec{\omega}_{s}.
\end{align*}
This allows us to find symmetries of the horizontal part (\ref{sphere_hamiltonian}). It is easy to check that the following mappings are symmetries of system (\ref{sphere_hamiltonian}),(\ref{sphere_hamiltonian2}):
\begin{align}
\varepsilon^1: &\left\{\begin{array}{l}
\vec{p}_s \mapsto -I_2\vec{p}_{t-s},\\ 
\vec\gamma_s \mapsto \pm I_1\vec\gamma_{t-s}, 
\end{array}\right. \label{sym1}\\
\varepsilon^2: &\left\{\begin{array}{l}
\vec{p}_s \mapsto -I_3\vec{p}_{t-s},\\ 
\vec\gamma_s \mapsto \pm I_3\vec\gamma_{t-s}, 
\end{array}\right. \label{sym2}\\
\varepsilon^3:&\left\{\begin{array}{l}
\vec{p}_s \mapsto I_1\vec{p}_{s},\\ 
\vec\gamma_s \mapsto \pm I_2\vec\gamma_{s},
\end{array}\right.& \label{sym3}\\
\varepsilon^4:&\left\{\begin{array}{l}
\vec{p}_s \mapsto I_3\vec{p}_{s},\\ 
\vec\gamma_s \mapsto \pm I_3\vec\gamma_{s},
\end{array}\right.& \label{sym4}\\
\varepsilon^5: &\left\{\begin{array}{l}
\vec{p}_s \mapsto -I_1\vec{p}_{t-s},\\ 
\vec\gamma_s \mapsto \pm I_2\vec\gamma_{t-s}, 
\end{array}\right. \label{sym5}\\
\varepsilon^6: &\left\{\begin{array}{l}
\vec{p}_s \mapsto -\vec{p}_{t-s},\\ 
\vec\gamma_s \mapsto \pm \vec\gamma_{t-s}, 
\end{array}\right. \label{sym6}\\
\varepsilon^7:&\left\{\begin{array}{l} 
\vec{p}_s \mapsto I_2\vec{p}_{s},\\ 
\vec\gamma_s \mapsto \pm I_1\vec\gamma_{s}.
\end{array}\right.& \label{sym7}
\end{align}
We note that each $\varepsilon^i$ represents two symmetries of the Hamiltonian system (\ref{sphere_hamiltonian})-(\ref{sphere_hamiltonian2}), which are characterized by different signs. If these symmetries are also symmetries of the exponential map in an almost-Riemannian problem on $S^2$, then they have to satisfy two extra conditions. First, they must be consistent with the transversality conditions. This is true for all of seven discrete symmetries. In fact, for example, for (\ref{sym3}),(\ref{sym4}),(\ref{sym7}) we have 
$$
\langle I_j\vec{p}_0,\pm I_j\vec{\gamma}_0\rangle = \pm \langle\vec{p}_0,\vec{\gamma}_0\rangle = 0.
$$
Secondly, these symmetries must preserve the initial point of a geodesic. It turns out that the second condition is not always satisfied. 

We write down explicitly the action of the symmetries on $S^2$:
\begin{align*}
\varepsilon^1 &: (x_s,y_s,z_s) \mapsto \pm(-x_{t-s},y_{t-s},z_{t-s}), \\
\varepsilon^2 &: (x_s,y_s,z_s) \mapsto \pm(x_{t-s},y_{t-s},-z_{t-s}), \\
\varepsilon^3 &: (x_s,y_s,z_s) \mapsto \pm(-x_s,y_s,-z_s), \\
\varepsilon^4 &: (x_s,y_s,z_s) \mapsto \pm(-x_s,-y_s,z_s), \\
\varepsilon^5 &: (x_s,y_s,z_s) \mapsto \pm(x_{t-s},-y_{t-s},z_{t-s}), \\
\varepsilon^6 &: (x_s,y_s,z_s) \mapsto \mp(x_{t-s},y_{t-s},z_{t-s}), \\
\varepsilon^7 &: (x_s,y_s,z_s) \mapsto \pm(x_s,-y_s,-z_s).
\end{align*}

Now we find for which sets on $S^2$ the symmetries $\varepsilon^i$, $i=3,4,7$, leave the initial points of extremal trajectories fixed:
\begin{align*}
\begin{array}{l}
\varepsilon^3 : \vec\gamma_0 = \pm I_2\vec\gamma_0,\\
\varepsilon^4 : \vec\gamma_0 = \pm I_3\vec\gamma_0,\\
\varepsilon^7 : \vec\gamma_0 = \pm I_1\vec\gamma_0;
\end{array}
&\quad\Rightarrow\quad
\begin{array}{l}
\varepsilon^3 : x_0e_1+y_0e_2+z_0e_3 = \pm(-x_0e_1+y_0e_2-z_0e_3),\\
\varepsilon^4 : x_0e_1+y_0e_2+z_0e_3 = \pm(-x_0e_1-y_0e_2+z_0e_3),\\
\varepsilon^7 : x_0e_1+y_0e_2+z_0e_3 = \pm(x_0e_1-y_0e_2-z_0e_3);
\end{array}
\quad\Rightarrow\quad
\\
&\quad\Rightarrow\quad
\begin{array}{l}
\varepsilon^3 : 
\left[ \begin{array}{l}
y_0 = 0,\\
y_0 = \pm 1;
\end{array}\right.\\
\varepsilon^4 : 
\left[ \begin{array}{l}
z_0 = 0,\\
z_0 = \pm 1;
\end{array}\right.\\
\varepsilon^7 :
\left[ \begin{array}{l}
x_0 = 0,\\
x_0 = \pm 1.
\end{array}\right.
\end{array}
\end{align*}

To prove the statement of the theorem we construct the symmetries of the exponential map in the almost-Riemannian case similarly to the sub-Riemannian case.
\begin{align*}
\varepsilon^3 &: (t,\vec{p}_0) \mapsto (t,I_1\vec{p}_{0}), & \varepsilon^3 &: \vec\gamma_t \mapsto \pm I_2\vec\gamma_{t}, \\
\varepsilon^4 &:(t,\vec{p}_0) \mapsto (t,I_3\vec{p}_{0}), & \varepsilon^4 &: \vec\gamma_t \mapsto \pm I_3\vec\gamma_{t}, \\
\varepsilon^7 &:(t,\vec{p}_0) \mapsto (t,I_2\vec{p}_{0}); & \varepsilon^7 &: \vec\gamma_t \mapsto \pm I_1\vec\gamma_{t}.
\end{align*}

From Proposition~\ref{fixed_in_preimage} we know that symmetries $\varepsilon^i$, $i=3,4,7$, have no fixed points in the preimage. Any fixed point in the image must satisfy $x=0$, $y=0$ or $z=0$ and from this the proof follows.

A list of symmetries and Maxwell sets is given in Tables~\ref{table_max1} and~\ref{table_max2}. 
\begin{table}[h]
\caption{Symmetries and corresponding Maxwell sets, part I}
\label{table_max1}
\begin{center}
\begin{tabular}{|c|c|c|c|}
\hline
Set & $x_0=0$ & $y_0=0$ & $z_0=0$ \\
\hline
Initial conditions & $p_3 = -\dfrac{p_1y\sqrt{1-a^2}}{z}$ & $p_3 = -\dfrac{p_2x}{z}$ & $p_2x+p_1y\sqrt{1-a^2} = 0$ \\
\hline 
Symmetries & $\varepsilon^7$ & $\varepsilon^3$ & $\varepsilon^4$\\
\hline
Maxwell sets
 & $x_t = 0$ & $y_t = 0$ & $z_t = 0$ \\
 \hline
\end{tabular}
\end{center}
\end{table}
\begin{table}[h]
\caption{Symmetries and corresponding Maxwell sets, part II}
\label{table_max2}
\begin{tabular}{|c|c|c|c|c|c|c|c|c|c|}
\hline
Set & \multicolumn{3}{c|}{$x_0=\pm 1$} & \multicolumn{3}{c|}{$y_0=\pm 1$} & \multicolumn{3}{c|}{$z_0=\pm 1$}\\
\hline
$\begin{array}{c}
\text{Initial}\\
\text{conditions}
\end{array}$ & \multicolumn{3}{c|}{$p_2=0$} & \multicolumn{3}{c|}{$p_1=0$} & \multicolumn{3}{c|}{$p_3=0$}\\
\hline
Symmetries & $\varepsilon^3$ & $\varepsilon^4$ & $\varepsilon^7$ & $\varepsilon^3$ & $\varepsilon^4$ & $\varepsilon^7$ & $\varepsilon^3$ & $\varepsilon^4$ & $\varepsilon^7$\\
\hline
$\begin{array}{c}
\text{Maxwell}\\
\text{sets}
\end{array}$
 &  $y_t = 0$ & $z_t = 0$ & $\begin{array}{l}y_t = 0 \\ z_t = 0\end{array}$ & $\begin{array}{l}x_t = 0 \\ z_t = 0\end{array}$ & $z_t = 0$ & $x_t = 0$ & $y_t = 0$ & $\begin{array}{l}y_t = 0 \\ z_t = 0\end{array}$ & $x_t = 0$\\
 \hline
\end{tabular}
\end{table}

\end{proof}

In articles~\cite{bonnard1,bonnard2} some similar results were obtained.
\begin{theorem}[\cite{bonnard2}]
\label{Bonnard_theorem}
\begin{enumerate}
\item The Gaussian curvature is negative on $S^2/\mathcal{S}$ for all $a\in[0,1)$. 
\item If $\vec\gamma_0\in\mathcal{S}$ then $\mathcal{S}\setminus \{\vec{\gamma}_0\}$ is the cut locus.
\item The geodesic flow has two reflection symmetries: with respect to $\mathcal{S}$ and with respect to the plane $x=0$.
\end{enumerate}
\end{theorem}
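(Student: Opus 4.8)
The three assertions are naturally ordered so that the local computation (1) and the symmetry statement (3) are the ingredients needed for the global statement (2); my plan is therefore to establish (1) and (3) first, by explicit computation with the frame, and then to combine them. For (1) I would compute the Gaussian curvature directly. In ambient coordinates $\vec\gamma = xe_1+ye_2+ze_3$ the frame fields are the rotation fields
\[
X_1 = \vec\gamma\times e_2 = -z\,\partial_x + x\,\partial_z, \qquad X_2 = \sqrt{1-a^2}\,\vec\gamma\times e_1 = \sqrt{1-a^2}\,(z\,\partial_y - y\,\partial_z),
\]
which are linearly independent exactly on $S^2\setminus\mathcal S$, where the structure is genuinely Riemannian. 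Writing $Y_i = e_i\times\vec\gamma$ for the rotation generators and using $[Y_i,Y_j]=-Y_k$ for cyclic $(i,j,k)$, a short bracket computation gives $[X_1,X_2] = \sqrt{1-a^2}\,(-y\,\partial_x + x\,\partial_y)$, which I would resolve in the moving frame as $[X_1,X_2]=\alpha X_1 + \beta X_2$ with $\alpha = \sqrt{1-a^2}\,y/z$ and $\beta = x/z$. For a surface with orthonormal frame obeying $[X_1,X_2]=\alpha X_1 + \beta X_2$ one has $K = X_1(\beta) - X_2(\alpha) - \alpha^2 - \beta^2$, the sign being fixed by the test $K\equiv 1$ for the round sphere; substituting and clearing $z^2$ yields
\[
K = -\frac{2x^2 + 2(1-a^2)y^2 + (2-a^2)z^2}{z^2},
\]
which is strictly negative for every $z\neq 0$ and every $a\in[0,1)$, proving (1).

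For (3) I would simply verify that the two reflections are isometries of the almost-Riemannian structure. The reflection across $\mathcal S$, given by $\sigma:(x,y,z)\mapsto(x,y,-z)$, sends the frame to $(-X_1,-X_2)$, while the reflection across the plane $x=0$, given by $\tau:(x,y,z)\mapsto(-x,y,z)$, sends it to $(-X_1,X_2)$; in both cases the image is again an orthonormal frame, so $\sigma$ and $\tau$ are isometries and therefore map geodesics to geodesics. This is consistent with the discrete symmetries already exhibited in this section: up to time reversal, $\sigma$ and $\tau$ coincide with the maps $\varepsilon^2$ of (\ref{sym2}) and $\varepsilon^1$ of (\ref{sym1}), which were shown there to be symmetries of the Hamiltonian system.

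Finally, for (2) the plan is to combine the two previous parts. Since $K<0$ off $\mathcal S$ by (1), there are no conjugate points along any geodesic arc lying in $S^2\setminus\mathcal S$, so cut points can only arise as Maxwell points. Fixing $\vec\gamma_0\in\mathcal S$, the reflection $\sigma$ fixes $\vec\gamma_0$, so by (3) each geodesic $\vec\gamma(s)$ issuing from $\vec\gamma_0$ in a non-tangent direction is accompanied by a distinct reflected geodesic $\sigma\circ\vec\gamma(s)$ from the same point; at the first return time $s_1$ to $\mathcal S$ one has $\sigma(\vec\gamma(s_1))=\vec\gamma(s_1)$, so the two geodesics of equal length $s_1$ meet, producing a Maxwell point on $\mathcal S$ and showing the geodesic is non-optimal beyond $s_1$. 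To upgrade this into the exact equality ``cut locus $=\mathcal S\setminus\{\vec\gamma_0\}$'' I would then establish the two remaining inclusions: that each arc is globally minimizing up to its first return to $\mathcal S$ (no earlier Maxwell point off $\mathcal S$), and that the first-return points sweep out all of $\mathcal S\setminus\{\vec\gamma_0\}$. The main obstacle is precisely this last step: turning the soft observation ``every Maxwell point lies on $\mathcal S$'' into the sharp statement demands a genuine optimality-and-surjectivity argument—controlling the first-return map to the singular set and excluding cut points strictly before the first return—which goes well beyond the purely local computations that settle (1) and (3), and is presumably where~\cite{bonnard2} invokes the explicit geodesic formulas.
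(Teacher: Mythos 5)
This theorem is imported by the paper from \cite{bonnard2} and stated without any internal proof, so there is no in-paper argument to compare against; your attempt has to be judged on its own. Parts (1) and (3) of your proposal are correct and complete. I checked the computations: $X_1=-z\,\partial_x+x\,\partial_z$, $X_2=\sqrt{1-a^2}\,(z\,\partial_y-y\,\partial_z)$, the bracket $[X_1,X_2]=\sqrt{1-a^2}\,(x\,\partial_y-y\,\partial_x)$, and the resolution coefficients $\alpha=\sqrt{1-a^2}\,y/z$, $\beta=x/z$ are all right; your frame formula $K=X_1(\beta)-X_2(\alpha)-\alpha^2-\beta^2$ has the correct sign convention (it returns $K\equiv 1$ for the round-sphere frame $\partial_\theta,\ \sin^{-1}\theta\,\partial_\phi$), and substitution indeed gives $K=-\bigl(2x^2+2(1-a^2)y^2+(2-a^2)z^2\bigr)/z^2$, whose numerator is a positive-definite combination on $S^2$, so $K<0$ off $\mathcal{S}$; the asymptotics $K\sim -2/\operatorname{dist}(\cdot,\mathcal{S})^2$ near the singular set is also consistent with the Grushin model, a good sanity check. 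For (3), the identity $S(u\times v)=-(Su)\times(Sv)$ for a reflection $S$ shows $\sigma_*(X_1,X_2)=(-X_1,-X_2)$ and $\tau_*(X_1,X_2)=(-X_1,X_2)$, so both maps preserve the orthonormal frame up to signs and are isometries, as you say; this squares with the symmetries $\varepsilon^1$, $\varepsilon^2$ of (\ref{sym1})--(\ref{sym2}) exhibited in the paper. (One small point you hedge on unnecessarily: at a point of $\mathcal{S}$ both frame fields are parallel to $e_3$, so every geodesic leaves $\mathcal{S}$ transversally and the ``non-tangent direction'' caveat is automatic; distinctness of $\gamma$ and $\sigma\circ\gamma$ on $(0,s_1)$ follows from the definition of first return.)

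The genuine gap is exactly where you place it: part (2) is not proved. Your reflection argument establishes only one inequality — the cut time is at most the first return time to $\mathcal{S}$, since $\sigma$ produces a distinct equal-length geodesic meeting $\gamma$ at $\gamma(s_1)\in\mathcal{S}$, and (as the paper itself recalls after the theorem) two distinct geodesics of equal length cannot both remain optimal past their meeting point. To get the stated equality ``cut locus $=\mathcal{S}\setminus\{\vec{\gamma}_0\}$'' you still need (i) that no cut occurs strictly before the first return, and (ii) that the first-return points exhaust $\mathcal{S}\setminus\{\vec{\gamma}_0\}$ and are genuine cut points. Your appeal to $K<0$ handles only conjugate points along arcs inside $S^2\setminus\mathcal{S}$ — correctly restricted, in line with the paper's warning (citing \cite{almost}) that negative curvature does not exclude conjugate points for geodesics crossing $\mathcal{S}$ — but it says nothing about Maxwell points off $\mathcal{S}$, and it gives no surjectivity. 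Excluding earlier Maxwell points and controlling the first-return map requires global information, e.g., the explicit parameterization of $\phi_3$ by elliptic integrals and monotonicity statements of the kind used in Lemma~\ref{function_lemma} and Proposition~\ref{sphere_estimates}, which is presumably the content of the argument in \cite{bonnard2}. As written, your proposal proves (1), (3), and the upper bound half of (2), but not the theorem's second assertion.
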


First of all, we note that even if the Gaussian curvature is negative everywhere, where it is defined, a geodesic still can have conjugate points if it crosses $\mathcal{S}$~\cite{almost}. That is why this argument allows to find the cut locus only for points on $\mathcal{S}$. Nevertheless any geodesic segment that does not cross the singular set is optimal.

Secondly, since symmetries (\ref{sym1})-(\ref{sym7}) are consistent with the transversality conditions, all seven of them are symmetries of the geodesic flow. But they are symmetries of the exponential map only if they preserve the initial point.

Using formula (\ref{sphere_sol}) one can obtain equations, from which we can find Maxwell times. For all symmetries these expressions have the form
$$
B_s(t)\sin\phi_3(t) + B_c(t) \cos\phi_3(t)  = 0,
$$
where $ B_s(t), B_c(t)$ are coefficients that depend on the initial point $\vec{\gamma}_0$ and $\vec{p}_t$:
\begin{enumerate}
\item $x_0=\pm 1$, equation $y_t=0$:
$$
\sqrt{1-a^2}p_3(t)p_1(t) \sin\phi_3(t) + \sqrt{M}p_2(t) \cos\phi_3(t) = 0;
$$
\item $x_0=\pm 1$, equation $z_t=0$:
\begin{align*}
\sin\phi_3(t) = 0;
\end{align*}
\item $y_0=\pm 1$, equation $x_t=0$:
$$
-p_3(t)p_2(t) \sin\phi_3(t) + \sqrt{M(1-a^2)}p_1(t) \cos\phi_3(t) = 0;
$$
\item $y_0=\pm 1$, equation $z_t=0$:
\begin{align*}
\sin\phi_3(t) = 0;
\end{align*}
\item $z_0=\pm 1$, equation $x_t=0$:
$$
\sqrt{M(1-a^2)}p_1(t) \sin\phi_3(t) + p_3(t)p_2(t) \cos\phi_3(t) = 0;
$$
\item $z_0=\pm 1$, equation $y_t=0$:
$$
\sqrt{M} p_2(t) \sin\phi_3(t) -\sqrt{1-a^2}p_3(t)p_1(t) \cos\phi_3(t) = 0;
$$
\item $z_0=0$, equation $z_t=0$:
\begin{align*}
\sin\phi_3(t) = 0;
\end{align*}
\item $y_0=0$, equation $y_t=0$:
\begin{align*}
(M z_0 &p_2(t)-(1 - a^2) x_0 p_3(t) p_1(0) p_1(t)) \sin\phi_3(t)  \\
&-\sqrt{1 - a^2} \sqrt{M} \left(z_0 p_3(t) p_1(t) + x_0 p_1(0) p_2(t)\right) \cos\phi_3(t) = 0;
\end{align*}
\item $x_0=0$, equation $x_t=0$:
\begin{align*}
\sqrt{M} &(\sqrt{1 - a^2} y_0 p_1(t) p_2(0) + z_0 p_3(t) p_2(t)) \sin\phi_3(t)\\
&+\left(y_0 p_3(t) p_2(0) p_2(t)-M \sqrt{1 - a^2} z_0 p_1(t) \right) \cos\phi_3(t) = 0.
\end{align*}
\end{enumerate}
In the expressions above we already used transversality conditions and canceled all non-zero multipliers.

Theorem~\ref{Bonnard_theorem} states that if $\vec{\gamma}_0\in\mathcal{S}$ then the cut time is the first instant of time $t$ when $\vec{\gamma}_t\in\mathcal{S}$. From the equations above it follows that the instant of time $t$ satisfies $\sin \phi_3(t) = 0$. 
\begin{proposition}
\label{sphere_estimates}
The equation $\sin \phi_3(t) = 0$ has positive solutions and the first positive root $t_{0}$ satisfies the following inequalities:
\begin{enumerate}
\item In region $C_1$: $t_{0} \leq 2K(k^2)/a$;
\item In region $C_2$: $t_{0} \leq 2kK(k^2)/a$;
\item In region $C_3$: $t_{0} \leq \pi/\sqrt{1-a^2}$;
\item In region $C_4$: $t_{0} = \pi$;
\item In region $C_5$: $t_{0} = \pi/\sqrt{1-a^2}$.
\end{enumerate}
\end{proposition}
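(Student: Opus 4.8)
The plan is to exploit the strict positivity of the right-hand side of the ODE (\ref{phi_equation}) for $\phi_3$. Since $a\in(0,1)$ and $p_1^2\le 1$ we have $1-a^2p_1^2>0$, and $\sqrt{M(1-a^2)}>0$ in every region; together with the initial condition $\phi_3(0)=0$ this makes $\phi_3(t)$ strictly increasing. Consequently the first positive root of $\sin\phi_3(t)=0$ is precisely the unique instant $t_0$ at which $\phi_3(t_0)=\pi$, and the whole problem reduces to locating where the monotone function $\phi_3$ first reaches the value $\pi$. To bound $t_0$ from above it therefore suffices to exhibit, in each region, a time $t_*$ (the one claimed in the statement) at which $\phi_3(t_*)\ge\pi$, whence existence of a positive root and the stated bound follow from the intermediate value theorem.

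For the oscillating and rotating regimes $C_1$ and $C_2$ I would take $t_*$ to be a half-period of the pendulum, namely $2K(k^2)/a$ and $2kK(k^2)/a$ respectively, and evaluate $\phi_3(t_*)$ in closed form. Over such a half-period the argument of the Jacobi functions advances by $2K(k^2)$, so by the quasi-periodicity of the incomplete elliptic integral of the third kind (the addition formulas (\ref{elliptic3_sum}), (\ref{am_sum}) already used for the periodic geodesics) the amplitude $\am$ increases by $\pi$ and the incomplete $\Pi$ increases by twice its complete value. Carrying this through the integrated expressions for $\phi_3$ yields exactly $\phi_3(2K/a)=2G_1(a,k)$ and $\phi_3(2kK/a)=2G_2(a,k)$. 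Lemma~\ref{function_lemma} then closes the argument: since $G_1(a,k)\ge G_1(a,0)=\pi/(2a)>\pi/2$ and $G_2(a,k)\ge G_2(a,0)=\pi/2$, with strict inequality for $k\in(0,1)$, both half-period values are at least $\pi$, so $t_0\le 2K/a$ in $C_1$ and $t_0\le 2kK/a$ in $C_2$.

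The remaining regions are handled by direct inspection of the explicit formulas for $\phi_3$. In $C_4$ one has $\phi_3=t$, giving $t_0=\pi$, and in $C_5$ one has $\phi_3=\sqrt{1-a^2}\,t$, giving $t_0=\pi/\sqrt{1-a^2}$, both with equality. In $C_3$ the formula reads $\phi_3=\sqrt{1-a^2}\,t+\bigl(\arctan(\tfrac{a}{\sqrt{1-a^2}}\tanh a\theta)-\arctan(\tfrac{a}{\sqrt{1-a^2}}\tanh a\theta_0)\bigr)$ with $\theta=\theta_0+t$; at $t_*=\pi/\sqrt{1-a^2}$ the leading term equals $\pi$, and since $\tanh$ and $\arctan$ are increasing the bracketed correction is nonnegative for $t>0$, so $\phi_3(t_*)\ge\pi$ and $t_0\le\pi/\sqrt{1-a^2}$. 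I expect the only genuine subtlety to lie in the bookkeeping for $C_1$ and $C_2$, namely correctly tracking how the incomplete integral of the third kind accumulates over a half-period and matching it against the complete integrals $G_1,G_2$ of Lemma~\ref{function_lemma}; the monotonicity/IVT skeleton and the sign of the arctangent correction in $C_3$ are then routine.
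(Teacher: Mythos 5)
Your proposal is correct and follows essentially the same route as the paper's proof: monotonicity of $\phi_3$ from (\ref{phi_equation}) reduces the problem to reaching $\phi_3=\pi$, the half-period evaluations $\phi_3(2K(k^2)/a)=2G_1(a,k)$ and $\phi_3(2kK(k^2)/a)=2G_2(a,k)$ via (\ref{elliptic3_sum}) combined with Lemma~\ref{function_lemma} handle $C_1$ and $C_2$, and the sign of the arctangent correction together with the explicit linear formulas settles $C_3$, $C_4$, $C_5$. No gaps.
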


\begin{proof}
The function $\phi_3(t)$ is a monotone increasing function of $t$, which follows from the expression (\ref{phi_equation}) for $\dot{\phi_3}$. Since $\phi_3(0) = 0$, the first positive root of $\sin\phi_3=0$ must satisfy $\phi_3 = \pi$.

By using formula (\ref{elliptic3_sum}) in the region $C_1$ we get:
\begin{align*}
\phi_3\left(\frac{2K(k^2)}{a}\right) &= \sqrt{\frac{1-a^2(1-k^2)}{a^2(1-a^2)}}\left( \Pi\left( \frac{a^2k^2}{a^2-1}; \am(a\theta,k^2)+\pi,k^2 \right) - \Pi\left( \frac{a^2k^2}{a^2-1}; \am(a\theta_0,k^2),k^2 \right) \right) = \\
&= \sqrt{\frac{1-a^2(1-k^2)}{a^2(1-a^2)}}2\Pi\left( \frac{a^2k^2}{a^2-1}; k^2 \right)=2G_1(a,k).
\end{align*}

From Lemma \ref{function_lemma} we get
$$
\phi_3\left(\frac{2K(k^2)}{a}\right)\geq \pi, \quad \forall a\in (0,1),\,k\in[0,1).
$$

Similarly in $C_2$:
\begin{align*}
\phi_3\left(\frac{2kK(k^2)}{a}\right) &= \sqrt{\frac{k^2+a^2(1-k^2)}{a^2(1-a^2)}}2 \Pi\left( \frac{a^2}{a^2-1};k^2 \right) = 2G_2(a,k).
\end{align*}
From Lemma \ref{function_lemma} we get
$$
\phi_3\left(\frac{2kK(k^2)}{a}\right) \geq \pi.
$$

In region $C_3$ we have got earlier this expression
$$
\phi_3 =\sqrt{(1-a^2)}t  + \left( \arctan\left( \frac{a}{\sqrt{1-a^2}}\tanh a\theta \right) - \arctan\left( \frac{a}{\sqrt{1-a^2}}\tanh a\theta_0 \right)  \right).
$$
Since $\tanh a(t+\theta_0)\geq \tanh a\theta_0$  for all $a\in(0,1)$, $\theta_0\in\mathbb{R}$ and $t\geq 0$, the expression in brackets is non-negative. Taking $t=\pi/\sqrt{1-a^2}$ proves the estimate.

The equalities for the regions $C_4$ and $C_5$ are obvious.
\end{proof}
From this we get immediately the following corollary.
\begin{corollary}
For any almost-Riemannian geodesic on $S^2$, s.t. $\vec{\gamma}_0\in\mathcal{S}$ the following bounds on the cut time $t_{cut}$ hold:
\begin{enumerate}
\item In region $C_1$: $t_{cut} \leq 2K(k^2)/a$;
\item In region $C_2$: $t_{cut} \leq 2kK(k^2)/a$;
\item In region $C_3$: $t_{cut} \leq \pi/\sqrt{1-a^2}$;
\item In region $C_4$: $t_{cut} \leq \pi$ ;
\item In region $C_5$: $t_{cut} \leq \pi/\sqrt{1-a^2}$.
\end{enumerate}
\end{corollary}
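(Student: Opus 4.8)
The plan is to recognise that this statement is an immediate consequence of Proposition~\ref{sphere_estimates}, once the cut time is identified with the first return of the geodesic to the singular set. First I would invoke Theorem~\ref{Bonnard_theorem}: for an initial point $\vec{\gamma}_0\in\mathcal{S}$ the cut locus coincides with $\mathcal{S}\setminus\{\vec{\gamma}_0\}$, so the cut time $t_{cut}$ is exactly the first positive instant at which $\vec{\gamma}_t$ meets $\mathcal{S}=\{z=0\}$ again. The remark following Theorem~\ref{Bonnard_theorem} supplies the complementary half of this identification: since any geodesic arc that does not cross $\mathcal{S}$ is optimal, the segment joining $\vec{\gamma}_0$ to its first return to $\mathcal{S}$ carries no earlier conjugate or Maxwell point, so $t_{cut}$ is genuinely this first return time and not something smaller.

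Second, I would translate the return condition $z_t=0$ into an equation in $\phi_3$. This is precisely item~7 of the list of Maxwell equations above (the case $z_0=0$, equation $z_t=0$), which after cancelling the nonzero transversality factors collapses to $\sin\phi_3(t)=0$. Hence $t_{cut}$ is nothing but the first positive root $t_{0}$ of $\sin\phi_3(t)=0$ already analysed in Proposition~\ref{sphere_estimates}.

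Finally, I would read off Proposition~\ref{sphere_estimates} region by region. It furnishes the upper bounds $t_{0}\leq 2K(k^2)/a$ in $C_1$, $t_{0}\leq 2kK(k^2)/a$ in $C_2$, $t_{0}\leq \pi/\sqrt{1-a^2}$ in $C_3$, together with the exact values $t_{0}=\pi$ in $C_4$ and $t_{0}=\pi/\sqrt{1-a^2}$ in $C_5$. Substituting $t_{cut}=t_{0}$ yields the claimed inequalities in every case; in $C_4$ and $C_5$ the equalities of the proposition are of course sharper than the stated $\leq$, but the uniform inequality formulation is retained for consistency of presentation.

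There is essentially no hard step. All the analytic content — the monotonicity of $\phi_3$, the evaluation of $\phi_3$ at a half-period through the quasi-addition formula for $\Pi$, and the curvature and cut-locus input — has already been discharged in Lemma~\ref{function_lemma}, Proposition~\ref{sphere_estimates}, and Theorem~\ref{Bonnard_theorem}. The only point deserving a moment of care is the logical chain $t_{cut}=(\text{first return to }\mathcal{S})=(\text{first root of }\sin\phi_3=0)=t_{0}$; once this identification is secured, the bounds transfer verbatim and the corollary follows.
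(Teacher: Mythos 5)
Your proposal is correct and follows exactly the paper's (largely implicit) argument: the paper likewise derives the corollary immediately from Proposition~\ref{sphere_estimates} after noting that Theorem~\ref{Bonnard_theorem} identifies $t_{cut}$ with the first return to $\mathcal{S}$, which reduces to the first positive root of $\sin\phi_3(t)=0$. Your explicit attention to the identification chain $t_{cut}=(\text{first return})=t_0$ is a welcome clarification of a step the paper leaves tacit, but it is the same proof.
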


Using bounds in the almost-Riemannian problem it is possible to give bounds for the cut time in the sub-Riemannian problem on SO(3).

\begin{corollary}
The following bounds on the cut time $t_{cut}$ in the left-invariant sub-Riemannian problems \textsc{(\ref{R_system})-(\ref{sr_action})} on $\SO(3)$ are true:
\begin{enumerate}
\item In region $C_1$: $t_{cut} \leq 2K(k^2)/a + \pi$;
\item In region $C_2$: $t_{cut} \leq 2kK(k^2)/a+ \pi$;
\item In region $C_3$: $t_{cut} \leq \pi/\sqrt{1-a^2} + \pi$;
\item In region $C_4$: $t_{cut} \leq 2\pi$;
\item In region $C_5$: $t_{cut} \leq \pi/\sqrt{1-a^2} + \pi$.
\end{enumerate}
\end{corollary}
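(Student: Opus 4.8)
The plan is to derive these bounds from the almost-Riemannian bounds of the previous corollary by establishing the general inequality $t_{cut}\le t_{cut}^{AR}+\pi$, where $t_{cut}^{AR}$ is the cut time of a suitably chosen projected almost-Riemannian geodesic issuing from the singular set $\mathcal{S}$. The first step is to observe that \emph{every} sub-Riemannian geodesic projects to such a geodesic. Given the initial covector $\vec{p}_0$ of an arbitrary geodesic $R_s$, the transversality condition $\langle\vec{p}_0,\vec{\gamma}_0\rangle=0$ restricted to the great circle $\mathcal{S}=\{z=0\}$ is a single linear equation in the angular coordinate of $\vec{\gamma}_0\in\mathcal{S}$, hence always solvable. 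Fixing such a $\vec{\gamma}_0$, the curve $\vec{\gamma}_s=R_s^{-1}\vec{\gamma}_0$ is an almost-Riemannian geodesic starting on $\mathcal{S}$ whose length equals the sub-Riemannian length of $R_s$, and which lies in the same region $C_1,\dots,C_5$.

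Next I would locate the almost-Riemannian cut point. By the previous corollary together with Theorem~\ref{Bonnard_theorem}(2), the projected geodesic $\vec{\gamma}_s$ returns to the cut locus $\mathcal{S}\setminus\{\vec{\gamma}_0\}$ at a first time $t^*=t_{cut}^{AR}$ bounded respectively by $2K(k^2)/a$, $2kK(k^2)/a$, $\pi/\sqrt{1-a^2}$, $\pi$, $\pi/\sqrt{1-a^2}$; in particular $z_{t^*}=0$, which is exactly the condition $\sin\phi_3(t^*)=0$ used in the proof of Proposition~\ref{sphere_estimates}. Since both endpoints $\vec{\gamma}_0,\vec{\gamma}_{t^*}$ lie on $\mathcal{S}$, the reflection across $\mathcal{S}$ — a symmetry of the geodesic flow by Theorem~\ref{Bonnard_theorem}(3) — fixes them and produces a second almost-Riemannian geodesic of the same length $t^*$ joining them.

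I would then lift both geodesics to $\SO(3)$. Both lifts start at $\Id$, have equal sub-Riemannian length $t^*$, and end in the fiber of the bundle $\SO(3)\to S^2$, $R\mapsto R^{-1}\vec{\gamma}_0$, over the point $\vec{\gamma}_{t^*}\in\mathcal{S}$. The key structural fact is that over a point of $\mathcal{S}$ this fiber is horizontal: its generator is the infinitesimal rotation about $\vec{\gamma}_{t^*}=xe_1+ye_2$, namely $xA_1+yA_2\in\spann\{A_1,A_2\}=\Delta$. Hence the fiber is a closed horizontal loop, and the two lifts, which differ by a rotation about $\vec{\gamma}_{t^*}$, are joined along it by a horizontal arc of some sub-Riemannian length $\ell$. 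Concatenating one lift with this arc yields a second horizontal path of length $t^*+\ell$ to the endpoint of the other lift, so $R_s$ meets a Maxwell point no later than $t^*+\ell$ and is not optimal beyond it.

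The main obstacle is to show that the connecting arc can always be chosen of length $\ell\le\pi$, which is precisely the extra term in the statement. The half-fiber has length $\pi\sqrt{x^2/(1-a^2)+y^2}\ge\pi$, so a naive bound goes the wrong way; the estimate is transparent only when the fiber is traversed about $e_2$, where the generator is the unit vector $X_1=A_2$, and in the degenerate regions $C_4$ and $C_5$, where the two lifts in fact coincide at $t^*$ (so $\ell=0$) and $t^*+\pi$ holds with slack. In the remaining cases I expect to control the \emph{actual} rotation angle between the two lifts, rather than bounding it by $\pi$, using the explicit parameterization~(\ref{R_matrix}) together with $\sin\phi_3(t^*)=0$ to pin this angle and conclude $\ell\le\pi$. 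Carrying out this last estimate from the elliptic-integral formulas is the delicate computational point; the region-by-region bounds then follow by adding $\pi$ to the almost-Riemannian bounds of the previous corollary.
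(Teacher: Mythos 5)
Your first two steps are fine: choosing $\vec{\gamma}_0\in\mathcal{S}$ to satisfy the one linear transversality condition is always possible, the projection preserves length and the vertical curve (hence the region $C_i$ and the modulus $k$), and the reflection across $\mathcal{S}$ does produce a second almost-Riemannian geodesic of length $t^*$ to the first-return point. But the proof then breaks at exactly the place you flag, and the failure is structural rather than computational. First, you anchor the fiber correction at the return point $\vec{\gamma}_{t^*}=(x,y,0)$, where the horizontal generator is $xA_1+yA_2=yX_1+\frac{x}{\sqrt{1-a^2}}X_2$, of $\metr$-norm $\sqrt{y^2+x^2/(1-a^2)}\geq 1$; nothing in the symmetry argument bounds the product of the rotation angle between the two lifts and this fiber speed by $\pi$, and the paper proves no such elliptic-integral estimate — the additive $\pi$ in the statement has nothing to do with the fiber over a generic point of $\mathcal{S}$. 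Second, even granting $\ell\leq\pi$, your concluding inference is invalid: the concatenation gives a path of length $t^*+\ell\geq t^*$ to $R_{t^*}$, a point the geodesic itself reaches in time $t^*$. Two paths of \emph{different} lengths to the same point form no Maxwell pair and contradict nothing, so "not optimal beyond $t^*+\ell$" does not follow. To bound the cut time one must exhibit, for every $t$ beyond the claimed bound, a competitor reaching the moving endpoint $R_t$ with length $<t$; your construction only ever produces (longer) paths to the fixed point $R_{t^*}$.

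The paper's proof avoids both problems by anchoring the fiber correction at $e_2$ instead of at the return point, and by arguing via a distance bound rather than a Maxwell point. For the endpoint $R_T$ (after a left shift) it takes a \emph{minimal} almost-Riemannian geodesic of length $\tau$ joining $R_Te_2$ to $e_2$ — minimality from Theorem~\ref{Bonnard_theorem}, since $e_2\in\mathcal{S}$ and the cut locus of a singular point is $\mathcal{S}$ itself, with $\tau$ bounded region-by-region through Proposition~\ref{sphere_estimates} — lifts it to $\SO(3)$ as the sub-Riemannian geodesic with the same vertical curve, and notes that the lifted endpoint differs from the target by a rotation about $e_2$. Rotations about $e_2$ are integral curves of the \emph{unit} horizontal field $X_1=A_2$, hence sub-Riemannian geodesics, and the short arc has length at most $\pi$: this, and only this, is the source of the extra $\pi$. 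In other words, the one special point you correctly identified ($e_2$, where the fiber is unit-speed horizontal) must be built into the construction from the outset; with the correction placed at $\vec{\gamma}_{t^*}$, the "delicate computational point" you postpone cannot be expected to close, and the optimality mechanism itself needs to be replaced by the endpoint-competitor argument above.
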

\begin{proof}
We construct a trajectory joining $\Id$ with $R_T\in \SO(3)$ that consists of two geodesic segments. Since the sub-Riemannian problem on SO(3) is left-invariant, we can look for a curve that connects $R_T^{-1}$ with $\Id$. 

First we find an almost-Riemannian geodesic $\gamma_s:[0,\tau]\to S^2$ that connects $R_Te_2$ with $e_2$. From Theorem~\ref{Bonnard_theorem} we know that this geodesic arc is minimal. Its length can be estimated by Proposition~\ref{sphere_estimates}.

Next we take the corresponding sub-Riemannian geodesic $R_s:[0,\tau]\to SO(3)$ that has the same vertical curve $p_t$. The terminal position $R_\tau$ is up to a rotation around the vector $e_2$ the identity element. But we know that a rotation around $e_2$ is a sub-Riemannian geodesic on SO(3) of length at most $\pi$. From this the proof follows. 
\end{proof}

\section*{Conclusion}
\label{conclusion}
\addcontentsline{toc}{section}{Conclusion}

In this article we have studied the left-invariant sub-Riemannian problem on SO(3) and the almost-Riemannian problem on $S^2$ which are connected with each other. In both problems we have studied the symmetries of the exponential map and obtained some necessary optimality conditions. We gave a description of periodic sub-Riemannian geodesics and studied some of their topological properties. Finally we have obtained some bounds on the cut time in the almost-Riemannian problem and constructed from this estimates on the cut time in the sub-Riemannian problem.

In the future we plan to obtain bounds on the Maxwell time and the conjugate time in sub-Riemannian problems on SO(3). This might allow us to construct optimal synthesis similar to the case of SE(2)~\cite{sachkov_se2, sachkov2,sachkov3}. In the almost-Riemannian problem it would be interesting to study some characteristics of the cut and conjugate locus for general points on $S^2$. Numerical experiments show that for a generic initial point the conjugate locus has four cusps, similar to the case of Riemannian problem on an ellipsoid. It would be interesting to give a rigorous prove of this statement and also to find how many cusps has a conjugate locus of a point on the singular set. At last it would be interesting to obtain a full optimal synthesis for the points $z_0 =\pm 1$. Numerical simulations suggest that the conjugate locus in this case is a symmetric astroid and the cut locus is a segment that connects its two opposite cusps. We note that the Gauss curvature argument will not work here, since the initial point does not lie on the singular set. We have tried to use the comparison theorems approach from~\cite{boscain_quant} combined with the results from this paper, but we were able only to show the absence of cut points before the singular set. Solving this particular problem may help to obtain optimal synthesis for points that lie outside the singular set and give ideas how to deal with singular sets in other almost-Riemannian problems.

\subsection*{Acknowledgements.}
The authors thank A.A. Agrachev for useful suggestions and comments.

\appendix
\section{Quaternions, SO(3) and $\mathbb{R}^3$}

Let $\mathbb{H} = \{q=q_0+iq_1+jq_2+kq_3 : q_0,...,q_3 \in\mathbb{R}\}$ be the quaternion algebra. The length of $q \in \mathbb{H}$ is defined as $|q| = \sqrt{q_0^2 +q_1^2 + q_2^2 + q_3^2}$. The quaternion $\overline{q} = q_0 - iq_1 - jq_2 - kq_3$ is called conjugate to $q$. The inverse quaternion of $q\neq 0$ is
$$
q^{-1} = \frac{\overline{q}}{|q|^2}.
$$

Let $S^3 = \{q\in \mathbb{H} :|q| = 1\}$ be a three-dimensional unit sphere and $I =\{q\in \mathbb{H}:q_0 = 0 \}$ be the space of imaginary quaternions, which is naturally identified with $\mathbb{R}^3$. Every quaternion $q \in S^3$ defines a rotation operator $R_q$ of any vector $a \in I$ via conjugation:
$$
R_q: a \mapsto qaq^{-1}\in I.
$$
For every $R_q$ there are two distinct quaternions $q$ and $-q$ in $S^3$ that correspond to the same rotation operator and therefore the mapping $p: q \mapsto R_q$ gives a double cover of $S^3$ over $\SO(3)$. This covering is a homeomorphism~\cite{quaternion} which is given in coordinates by
\begin{equation}
\label{projection_map}
p:q\mapsto\begin{pmatrix}
q_0^2+q_1^2-q_2^2-q_3^2 & 2(q_1q_2-q_0q_3) & 2(q_0q_2+q_1q_3) \\
2(q_1q_2+q_0q_3) & q_0^2-q_1^2+q_2^2-q_3^2 & 2(q_2q_3-q_0q_1) \\
2(q_1q_3-q_0q_2) & 2(q_2q_3+q_0q_1) & q_0^2-q_1^2-q_2^2+q_3^2
\end{pmatrix}.
\end{equation}

 \sloppy 
If $R\in \SO(3)$ is the rotation operator around a nonzero vector $\vec{a} = (a_1, a_2, a_3) \in \mathbb{R}^3$ by an angle $\beta$ then the corresponding unit quaternion $q\in S^3$ has the form:
\fussy
\begin{equation}
\label{quaternion_rule}
q = \cos\frac{\beta}{2}+\frac{a_1i+a_2j+a_3k}{|\vec{a}|}\sin\frac{\beta}{2}.
\end{equation}

The space I of imaginary quaternions is a Lie algebra with a Lie bracket
$$
[a,b]=\frac{ab-ba}{2}.
$$
All three spaces $I$, so(3) and $\mathbb{R}^3$ with the cross product are isomorphic as Lie algebras. This isomorphism is given by:
\begin{equation}
\label{algebra_iso}
A = a_1A_1 + a_2A_2 + a_3A_3 \simeq a = a_1i + a_2j + a_3j \simeq \vec{a}=a_1e_1 + a_2e_2 + a_3e_3.
\end{equation}

Suppose that $R\in\SO(3)$ and $q\in p^{-1}(R)$. Then the following is true for any $A\in \so(3)\simeq a\in I  \simeq \vec{a}\in\mathbb{R}^3$:
\begin{equation}
\label{iso_property}
RAR^{-1} \simeq qaq^{-1}\simeq R\vec{a}.
\end{equation}

The Lie algebras I, SO(3) and $\mathbb{R}^3$ carry a natural scalar product given by
$$
(a,b)=(\vec{a},\vec{b})=-\frac{1}{2}\tr(AB) =a_1b_1+a_2b_2+a_3b_3.
$$
This is just the Killing form used in Section~\ref{sec:SR-geodesics}.

\section{Elliptic integrals and elliptic functions}

In this article we have used the following definitions.
\begin{enumerate}
\item Elliptic integral of the first kind:
\begin{equation}
\label{elliptic_1}
F(\phi;k^2)=\int_0^{\phi}\frac{d\theta}{\sqrt{1-k^2\sin^2\theta}};
\end{equation}
\item Elliptic integral of the second kind:
\begin{equation}
\label{elliptic_2}
E(\phi;k^2)=\int_0^{\phi}\sqrt{1-k^2\sin^2\theta}d\theta
\end{equation}
\end{enumerate}
where $0\leq k<1$. The complete elliptic integrals are defined as $K(k^2)=F(\pi/2;k^2)$ and $E(k^2)=E(\pi/2;k^2)$.

The Jacobi amplitude function $\am(\theta;k^2)$ is the inverse of the elliptic integral of the first kind with respect to $\theta$. The Jacobi elliptic functions are defined in the following way:
\begin{align*}
\sn(\theta;k^2)&=\sin \left( \am(\theta;k^2) \right);\\
\cn(\theta;k^2)&=\cos \left( \am(\theta;k^2) \right);\\
\dn(\theta;k^2)&=\sqrt{1-k^2\sn^2(\theta;k^2)}.
\end{align*}
The functions  $\sn(\theta;k^2)$ and $\cn(\theta;k^2)$ are $4K(k^2)$-periodic and $\dn(\theta;k^2)$ is $2K(k^2)$-periodic. When it does not lead to any confusion we omit the modulus $k^2$.

For the Jacobi elliptic functions we have the addition formulas~\cite{byrd}
\begin{equation}
\label{sum_sn}
\sn(a\pm b)=\frac{\sn a\cn b\dn b \pm \sn b\cn a\dn a}{1-k^2 \sn^2 a \sn^2 b},
\end{equation}
\begin{equation}
\label{sum_cn}
\cn(a\pm b)=\frac{\cn a\cn b\mp \sn a\sn b\dn a\dn b}{1-k^2 \sn^2 a \sn^2 b},
\end{equation}
\begin{equation}
\label{sum_dn}
\dn(a\pm b)=\frac{\dn a\dn b\mp \sn a\sn b\cn a\cn b}{1-k^2 \sn^2 a \sn^2 b}.
\end{equation}

Elliptic integral of the third kind:
$$
\Pi(n,\phi;k^2)=\int_0^{\phi}\frac{d\theta}{(1-n\sin^2\theta)\sqrt{1-k^2\sin^2\theta}}.
$$
With a change of variables $\sin\theta = \sn(\alpha,k^2)$ it takes the form
\begin{equation}
\label{elliptic_3}
\Pi(n,\phi;k^2)=\int_0^{F(\phi;k^2)}\frac{d\alpha}{1-n\sn^2(\alpha,k^2)}
\end{equation}
and the complete elliptic integral of the third kind is denoted by $\Pi(n;k^2)=\Pi(n,\pi/2;k^2)$.

All three elliptic integrals satisfy a simple addition property of the form~\cite{lawden}
\begin{equation}
\label{elliptic1_sum}
F(\phi+m\pi;k^2) = F(\phi;k^2) + 2mK(k^2),
\end{equation}
\begin{equation}
\label{elliptic2_sum}
E(\phi+m\pi;k^2) = E(\phi;k^2) + 2mE(k^2),
\end{equation}
\begin{equation}
\label{elliptic3_sum}
\Pi(n,\phi+m\pi;k^2) = \Pi(n,\phi;k^2) + 2m\Pi(n;k^2).
\end{equation}

From (\ref{elliptic1_sum}) one can derive an analogous formula for $\am(\theta,k^2)$:
\begin{equation}
\label{am_sum}
\am(\theta + 2mK(k^2);k^2) = \am(\theta;k^2) + m\pi.
\end{equation}

The following formulas for the derivatives of the elliptic integrals are true~\cite{byrd}:
\begin{equation}
\label{elliptic1_der}
\frac{d K(k^2)}{dk} = \frac{E(k^2)-(1-k^2)K(k^2)}{k(1-k^2)},
\end{equation}
\begin{equation}
\label{elliptic2_der}
\frac{d E(k^2)}{dk} = \frac{E(k^2)-K(k^2)}{k},
\end{equation}
\begin{equation}
\label{elliptic3_der1}
\frac{\partial \Pi(n;k^2)}{\partial k} = \frac{k}{(1-k^2)(k^2-n)}\left( E(k^2)-(1-k^2)\Pi(n;k^2) \right),
\end{equation}
\begin{equation}
\label{elliptic3_der2}
\frac{\partial \Pi(n;k^2)}{\partial n} = \frac{\Pi(n;k^2)-K(k^2)}{2n(1-n)}-\frac{E(k^2)}{2(k^2-n)(1-n)}.
\end{equation}

The following formulas for the asymptotic expansions of the elliptic integrals when $k\to 0$ are true~\cite{byrd}:
\begin{equation}
\label{elliptic1_assympt}
K(k^2) = \frac{\pi}{2}\left( 1 + \frac{k^2}{4} \right) + O(k^4),
\end{equation}
\begin{equation}
\label{elliptic2_assympt}
E(k^2) = \frac{\pi}{2}\left( 1 - \frac{k^2}{4} \right) + O(k^4).
\end{equation}

When $k\to 1-0$ we have~\cite{byrd}:
\begin{equation}
\label{elliptic1_limit}
\lim_{k\to 1-0} \left( K(k^2) - \ln\frac{4}{\sqrt{1-k^2}} \right)=0.
\end{equation}

\end{document}